\newtheorem{assumption}{Assumption}
\newcommand{\bluetext}[1]{{\leavevmode\color{blue}#1}}
\newcommand{\mcl}{\mathcal}
\newcommand{\Pmat}[1]{\mathbf{P}^{#1}}
\newcommand{\Pnext}{\mathbf{P}}
\newcommand{\Rvec}[1]{\mathbf{r}^{#1}}
\newcommand{\sspace}{\mcl{S}}
\newcommand{\bS}{\textbf{S}}
\newcommand{\bs}{\textbf{s}}
\newcommand{\bA}{\textbf{A}}
\newcommand{\ba}{\textbf{a}}
\newcommand{\pspace}{\Pi^{(N)}}
\newcommand{\pspacedyn}{\Bar{\Pi}}
\newcommand{\y}[2]{y_{#1, #2}}
\newcommand{\yb}{\textbf{y}}
\newcommand{\mB}{\textbf{x}}
\newcommand{\Dels}{\Delta_{\sspace}}
\newcommand{\syncconst}{\rho}
\newcommand{\THor}{T}
\newcommand{\Vlp}[2]{V_{\text{LP -}#1}^{(#2)}}
\newcommand{\Vlpt}[3]{V_{\text{LP -} #2}^{(#3)}(#1)}
\newcommand{\VoptInf}[2]{W_{#1}{(#2)}}
\newcommand{\Vopt}[1]{V_{\text{opt}}^{#1}}
\newcommand{\Vpol}[1]{V_{\pi}^{(#1)}}
\newcommand{\Vdetdisc}[2]{
    \ifthenelse{\isempty{#2}}{V^{\infty}_{\beta}(#1)}
    {V^{\infty}_{#2}(#1)}
}
\newcommand{\LPpol}[2]{\mu_{#1}(#2)}
\newcommand{\E}{\mathbb{E}}
\newcommand{\ProbP}{\mathbb{P}}
\newcommand{\R}{\mathbb{R}}
\newcommand{\one}{\textbf{1}}
\newcommand{\rotcost}[3]{\Tilde{#1}(#2, #3)}
\newcommand{\bx}{{\bm{x}}}
\newcommand{\by}{{\bm{y}}}
\newcommand{\bu}{\bm{u}}
\newcommand{\bU}{\bm{U}}
\newcommand{\bv}{\bm{v}}
\newcommand{\bZ}{\bm{Z}}
\newcommand{\Costmin}[2]{L_{#1}(#2)}
\newcommand{\bX}[1]{{\bm{X}(#1)}}
\def\mH{{\bm{H}}}
\def\mU{{\bm{U}}}
\def\mX{{\bm{X}}}
\newcommand{\laghomosub}{\mathbf{\nu_{\text{rel}}}}
\newcommand{\lagmax}{\mathbf{\nu_{\text{max}}}}
\newcommand{\biasvec}[1]{\mathbf{b}_{\text{rel}}(#1)}
\newcommand{\biasgen}[2]{\mathbf{b}_{#2}(#1)}
\newcommand{\gainrel}{\mathbf{g}_{\text{rel}}}
\newcommand{\gaingen}[1]{\mathbf{g}_{#1 }}
\def\floor#1{\lfloor #1 \rfloor}
\title{Model predictive control is almost optimal for restless bandits}
\thanks{Institute of Engineering Univ. Grenoble Alpes}, LIG, 38000 Grenoble, France%
\begin{document}

\maketitle

\begin{abstract}
    We consider the discrete time infinite horizon average reward restless markovian bandit (RMAB) problem. We propose a \emph{model predictive control} based non-stationary policy with a rolling computational horizon $\tau$. At each time-slot, this policy solves a $\tau$ horizon linear program whose first control value is kept as a control for the RMAB. Our solution requires minimal assumptions and quantifies the loss in optimality in terms of $\tau$ and the number of arms, $N$. We show that its sub-optimality gap is $O(1/\sqrt{N})$ in general, and $\exp(-\Omega(N))$ under a local-stability condition. Our proof is based on a framework from dynamic control known as \emph{dissipativity}. Our solution is easy to implement and performs very well in practice when compared to the state of the art. Further, both our solution and our proof methodology can easily be generalized to more general constrained MDP settings and should thus be of great interest to the burgeoning RMAB community.
\end{abstract}

\begin{keywords}%
  Restless Bandits, LP-update policies, Constrained MDPs%
\end{keywords}

\section{Introduction}
This work investigates the sequential decision making problem of Restless Multi-Armed Bandits (RMAB for short) over an infinite discrete time horizon. In this problem there are $N$ statistically identical arms. At each time step, the decision maker must choose for each arm if they would like to pull the arm or leave it as is. The decision maker has a constraint $\alpha N$ on the maximal number of arms that they may pull at each time instance. Each arm has a known state belonging to a common finite state space and upon choosing an action, produces a known state and action dependent reward. Next, the arms evolve to a new state independently according to a known state-action dependent transition kernel. These arms are only coupled through the budget constraint on the number of arms that may be pulled at each time instance. The state and reward are both revealed to the decision maker before the next decision needs to be made. The objective of the decision maker is to maximize the long-term time average reward.

This problem was first proposed by \citet{Wh88}. Over the years RMABs have been used to model a number of practical problems. These applications include web-crawling, queuing, communication systems, scheduling problems and many more, \citep{veatch1996scheduling}, \citep{dance2019optimal}, \citep{nino2002dynamic}, \citep{MearaWebcrawl01}. The problem of choosing a subset of tasks to perform among a larger collection of tasks under resource constraints shows up time and time again in various resource constrained control problems. For a comprehensive review on RMABs and their applications, the interested reader is directed towards \citet{NinoMora23}. While the existence of an optimal policy for RMABs is straightforward, \citet{PT99} showed that the exact solution to this problem is PSPACE-hard. Consequently, most work focused on designing approximate solutions with good performance guarantees. 

In the seminal paper, \citet{Wh88} suggested that under a condition known as indexability, an index can be associated with each state. This index is now referred to as the Whittle's Index (WI), and it was conjectured that a priority policy based on this index would be an optimal solution for this problem.  This setting naturally lends itself to mean field approximations where one may replace the $N$ armed problem with a dynamical system in order to find these approximate solutions. \citep{WW90} were the first to point out that under a (hard to verify) condition on the dynamical system known as uniform global attractor property (UGAP), the (WI) was asymptotically optimal. Recently, many of the results in the RMAB literature have focused on two major aspects of the problem: how quickly do proposed asymptotically optimal policies converge to the optimal solution as a function of the number of arms and whether the underlying assumptions such as indexability and UGAP can be made less restrictive to obtain more general conditions under which optimal solutions can be found. In the former category, under the assumptions of \citet{WW90}, \citet{GGY23} showed that the WI policy is exponentially close to the optimal solution. Recently, several works have been able to show an exponential order of convergence, \citet{GGY23b, HXCW24} without an indexability condition. On the other hand, in the latter category, \citet{HXCW23} showed an asymptotic convergence result under a less restrictive assumption known as the \emph{synchronization assumption}. Similar works include \citet{yan2024} and \citet{hong2024unichain} which show asymptotically optimal algorithms. Furthermore, \citet{HXCW24} loosen the restrictions to show asymptotic convergence and describe fundamental conditions to achieve exponential convergence rates for any algorithm. Our paper falls partially in the latter category by providing asymptotic convergence results under \emph{the weakest assumptions (to the best of our knowledge)} but we also show that under certain local stability assumptions one retrieves exponential convergence rates. A more comprehensive view of recent results can be found in the supplementary material.

\paragraph{Main Contributions}
The main contribution of our paper is the proof that a very natural model predictive control (LP-update) provides the \emph{best of both worlds}, it not only provides an algorithm that works well in practice, but we can also theoretically guarantee \emph{under the weakest assumptions (to the best of our knowledge) convergence to the optimal policy}. This idea of resolving an LP for a finite-horizon has been used previously; however, its use has been entirely restricted to either the finite-horizon case or, when used in the infinite horizon case, required a critical UGAP assumption(\emph{uniform global attractor property}) on the system. This assumption is known to be extremely hard to verify \cite{GGY23}. In this paper, by introducing the framework of \emph{dissipativity} we are able to show that a finite horizon policy indeed provides an almost optimal policy for the infinite horizon problem. Note, in the absence of such a framework, the finite horizon policy may veer very far from the infinite horizon policy \cite{DTGLS14}. The use of this framework is one of the key technical novelties of our approach. 
Apart from the main result, we make several fresh observations about the nature of RMAB problems:
\begin{itemize}[nosep]
    \item Our proof is of independent interest since it uses a new framework known as \emph{dissipativity}. Dissipativity is a closely studied phenomenon in the model predictive literature and is used to study how a policy drives a system towards optimal fixed points \citet{DTGLS14}. 
    \item Returning to the dynamics around the fixed point, we can tighten the rate of convergence to $e^{-cN}$ under a local stability condition.
    
    \item Perhaps the most helpful portion of our results, for practical purposes: the MPC algorithm works well in practice and is easy to implement. It performs well both in terms of the number of arms $N$ as well as the computational time horizon $T$, beating state-of-the-art algorithms in our bench marks. 
    \item Finally, from a control systems point of view, we return from trying to steer a dynamical system towards a fixed point to trying to maximize a value function. Policies can often be discontinuous in states while value functions are often smoother under mild assumptions.
\end{itemize}

\paragraph{Road-map} The rest of the paper is organized as follows. We describe the system model and the corresponding linear relaxation in Section~\ref{sec:model}. We build the LP-update algorithm in Section~\ref{sec:algo} and present its performance guarantee in Section~\ref{sec:main-theory}.  We provide the main ingredients of the proof in Section~\ref{sec:proof} postponing the most technical lemmas to the appendix.  We illustrate the performance of the algorithm in Section~\ref{sec:numerical}. The appendix contains additional literature review \ref{apx:review}, details of the algorithm and their extension to multi-constraints MDPs \ref{apx:algo}, additional proofs \ref{apx:PFcomp}, \ref{apx:PFcomp_expo} and details about the numerical experiments \ref{apx:parameters}. 

\paragraph{Reproducibility} The code to reproduce this paper (including all simulations and figures) is available at \url{https://github.com/ngast/paper_banditMPC_colt2025}.

\section{System Model and linear relaxation}
\label{sec:model}
\subsection{System Model}
We consider an infinite horizon discrete time restless Markovian bandit problem parameterized by the tuple $\langle \sspace, \Pmat{0}, \Pmat{1}, \Rvec{0}, \Rvec{1}; \alpha, N \rangle$.  A decision maker is facing $N$ statistically identical arms, and each arm has a state that belongs to the finite state-space $\sspace$. At each time-instant, the decision maker observes the states of all arms $\bs=\{s_1\dots s_N\}$ and chooses a vector of actions $\ba=\{a_1\dots a_N\}$, where the action $a_n=1$ (respectively $a_n=0$) corresponds to pulling the arm $n$ (or leaving it). The decision maker is constrained to pull at most $\alpha N$ arms at each decision epoch. 

The matrices $\Pmat{0}$ and $\Pmat{1}$ denote the transitions matrices of each arm and the vectors $\Rvec{0}, \Rvec{1}$ denote the $|\sspace|$ dimensional vector for the rewards. We assume that all the rewards $\Rvec{}$ lie between $0$ and $1$. As the state-space is finite, this assumption can be made without loss of generality by scaling and centering the reward vector. 
We assume that the transitions of all arms are Markovian and independent. This means that if the arms are in state $\bs := \{s_1, s_2 \dots s_N\}\in\sspace^N$ and the decision maker takes an action $\ba\in\{0,1\}^N$, then the decision maker earns a reward $\frac{1}{N}\sum_{n} r^{a_n}_{s_n}$ and the next state becomes $\bs':= \{s'_1, s'_2 \dots s'_N\}\in\sspace^N$ with probability
\begin{equation}\label{EQ:MKEVOL}
    \begin{split}
            &\ProbP(\bS(t + 1) = \bs'|\bS(t) = \bs,\bA(t) = \ba, \dots \bS(0), \bA(0)) \\
            &\qquad= \ProbP(\bS(t + 1) = \bs'|\bS(t) = \bs,\bA(t) = \ba)
            = \Pi_{n = 1}^{N} P^{a_n}_{s_n, s_n'}.
    \end{split}
\end{equation}
It is important to note that the arms are only coupled through the budget constraint $\alpha$.

Let $\pi$ be a stationary policy mapping each state to a probability of choosing actions, i.e, $\pi : \sspace^{N} \to \Delta (\{0, 1\}^{N})$ subject to the budget constraint $\alpha$. Let $\pspace$ denote the space of all such policies. We define the average gain of policy $\pi\in\pspace$ as
\begin{equation}\label{EQ::VPOLT}
    \Vpol{N} = \lim_{\THor \to \infty}\frac{1}{\THor}\E_{\pi}\left[ \sum_{t = 0}^{\THor - 1} \frac{1}{N} \sum_{n = 1}^N r^{A_n(t)}_{S_n(t)}\right].
\end{equation}
In theory, the average gain of a policy might depend on the initial state $\bS^{(N)}(0)$. Yet, under mild conditions (which will be verified in our case), this value does not depend on the initial state. This is why we omit the dependence on the initial state and simply write $\Vpol{N}$.
 
Here $(S_n(t), A_n(t))$ denotes the state-action pair of the $n$th arm at time $t$ and $r^{A_n(t)}_{S_n(t)}$ denotes the $S_n(t)^{\text{th}}$ entry of the $\Rvec{A_n(t)}$ vector. As the state-space and action space is finite, for any stationary policy $\pi$, the limit is well defined, \cite{puterman2014markov}. The RMAB problem amounts to computing a policy $\pi$ that maximizes the infinite average reward. We denote the optimal value of the problem as
\begin{align}\label{EQ::VOPTT}
    \Vopt{N} &:= \max_{\pi \in \pspace} \Vpol{N}.
\end{align}
The optimal policy exists and the limit is well defined, \citet{puterman2014markov}. 

\subsection{Alternative state representation via empirical distribution}

In order to build an approximation of \eqref{EQ::VOPTT}, we introduce an alternative representation of the state space, that we will use extensively in the paper. Given any joint state of the arms $\bs\in\sspace^{N}$, we denote the empirical distribution of these arms as $\mB(\bs)\in\Delta_S$, where $\Dels$ is the simplex of dimension $|\sspace|$. $\mB(\bs)$ is a vector with $|\sspace|$ dimensions and $x_i(\bs)$ is the fraction of arms that are in state $i$.  Next, given an action vector $\ba$, we denote by $\bu(\bs, \ba)$ the empirical distribution of the state-action pairs ($s,1$). In words, $u_i(\bs,\ba)$ is the fraction of arms that are in state $i$ and that are pulled.  Since no more than $N\alpha$ arms can be pulled at any time instance and no more than $Nx_i(\bs)$ arms can be pulled in state $i$, it follows that when $\mB$ is fixed, $\bu$ satisfies the following inequalities,
    \begin{equation}\label{EQ::POL}
    0 \leq \bu \leq \mB \hspace{0.3 in} \|\bu\|_1 \leq \alpha \|\mB\|_1 = \alpha,
\end{equation}
where $\bu\leq\mB$ denotes a component-wise inequality and $\|\cdot \|_1$ denotes the $l_1$ norm. We denote by $\mcl{U}(\mB)$ the set of feasible actions for a given $\mB$, \emph{i.e.}, the set of $\bu$ that satisfy \eqref{EQ::POL}.

\subsection{Linear relaxation}

We consider the following linear program:
\begin{subequations}
    \label{EQ::VOPTDYN-Tinf}
    \begin{align}
       \Vopt{\infty} := \max_{\bx,\bu\in\Delta_{\sspace}}&~ \Rvec{0}\cdot \mB + (\Rvec{1} - \Rvec{0})\cdot\bu,\label{EQ:VOPTDYN-inf1}\\
        \text{Subject to: }\quad
        &\bu \in \mcl{U}(\mB) \label{EQ::MF3-inf}\\
        &\mB = \mB  \Pmat{0} +  \bu (\Pmat{1} - \Pmat{0}) \label{EQ::MF2-inf}
    \end{align}
\end{subequations}
This linear program is known to be a relaxation of \eqref{EQ::VOPTT} that is asymptotically tight, that is $\Vopt{N}\le \Vopt{\infty}$ for all $N$ and $\lim_{N\to\infty}\Vopt{N}=\Vopt{\infty}$, see \citet{GGY23b,HXCW24}.

To give some intuition on the relationship between \eqref{EQ::VOPTT} and \eqref{EQ::VOPTDYN-Tinf}, we remark that if $\bX{t} := \mB(\bS^{N}(t))$ is the empirical distribution of states at time $t$ and $\bU(t)=\bu(\bS(t),\bA(t))$ is the joint control, then it is shown in \citet{GGY23b} that the Markovian evolution \eqref{EQ:MKEVOL} implies
\begin{equation}
\E[\bX{t + 1} \mid \bX{t}, \bU(t)] = \mX{(t)}  \Pmat{0} +  \bU(t)(\Pmat{1} - \Pmat{0}).
\label{EQ:evolution_mf}
\end{equation}
In \eqref{EQ::VOPTDYN-Tinf}, the variable $x_i$ corresponds to the time-averaged fraction of arms in state $i$; similarly the variable $u_i$ corresponds to the time-averaged fraction of arms in state $i$ that are pulled. The constraint \eqref{EQ::MF3-inf} imposes that \emph{on average}, no more than $\alpha N$ arms are pulled. This is in contrast with the condition imposed for problem \eqref{EQ::VOPTT} that enforces this condition at each time step.

\section{Construction of the LP-update policy}
\label{sec:algo}

\subsection{The Finite-Horizon Mean Field Control Problem}

To build the LP-update policy, we consider a \emph{controlled dynamical system}, also called the \emph{mean field model}, that is a finite-time equivalent of \eqref{EQ::VOPTDYN-Tinf}. For a given initial condition $\bx(0)$ and a time-horizon $\tau$, the states and actions of this dynamical system are constrained by the evolution equations 
\begin{subequations}
    \label{EQ:MeanField}
    \begin{align}
        \bu(t) &\in \mcl{U}(\bx(t))  \label{EQ::MF3} \\
        \bx({t + 1}) &= \bx(t) \Pmat{0}  + \bu(t) (\Pmat{1} - \Pmat{0}) , \label{EQ::MF2}
    \end{align}
\end{subequations}
$\forall t\in\{0\dots \tau-1\}$. In the above equation, \eqref{EQ::MF2} should be compared with  \eqref{EQ:evolution_mf} and indicates that $\bx(t)$ and $\bu(t)$ correspond to the quantities $\E[\bX{t}]$ and $\E[\bU(t)]$ of the original stochastic system. As the constraint \eqref{EQ::MF3} must be ensured by $\bx(t)$ and $\bu(t)$, this constraint \eqref{EQ::POL} must be satisfied for the expectations: $\E[\bU(t)]\in\mcl{U}(\E[\bX{t}])$.

The reward collected at time $t$ for this dynamical system is $\Rvec{0}\cdot \bx(t) + (\Rvec{1} - \Rvec{0})\cdot\bu(t)$. Let $\lambda$ be the dual multiplier of the constraint \eqref{EQ::MF2-inf} of an optimal solution of \eqref{EQ::VOPTDYN-Tinf}. We define a \emph{deterministic} finite-horizon optimal control problem as:
\begin{subequations}
    \label{EQ::VOPTDYN}
    \begin{align}
       \VoptInf{\tau}{\mB(0)} &= \max_{\bx, \bu} \sum_{t = 0}^{\tau - 1} \left( \Rvec{0}\cdot \mB(t) + (\Rvec{1} - \Rvec{0})\cdot\bu(t)\right) + \lambda \cdot\bx(\tau),\label{EQ:VOPTDYN1}\\
         &\text{Subject to:} \text{ $\bx$  and $\bu$ satisfy \eqref{EQ:MeanField} for all $t\in\{0,\tau-1\}$,}
    \end{align}
\end{subequations} 
Before moving forward, the equation above deserves some remarks. First, for any finite $\tau$, the objective and the constraints \eqref{EQ:MeanField} for the optimization problem \eqref{EQ::VOPTDYN} are linear in the variables $(\bx(t),\bu(t))$. This means that this optimization problem is computationally easy to solve. \textbf{In what follows, we denote by $\LPpol{\tau}{\mB}$ the value of $\bu(0)$ of an optimal solution to \eqref{EQ::VOPTDYN}.}

Second, the definition of \eqref{EQ::VOPTDYN} imposes that the constraint $\|\bu\|_1 \leq \alpha \|\mB\|_1 = \alpha $ holds for each time $t$. This is in contrast to the way this constraint is typically treated in RMAB problems, in which case \eqref{EQ::POL} is replaced with the time-averaged constraint $\frac{1}{T}\sum_{t = 0}^{T - 1}\|\bu(t)\|_1 \leq \alpha$. The latter relaxation was introduced in \cite{Wh88} and is often referred to as Whittle's relaxation \citep{Avrachenkov2020WhittleIB,Avrachenkov2021}. This is the constraint that we use to write  \eqref{EQ::VOPTDYN-Tinf}.  \cite{GGY23} showed that for any finite $T$, the finite-horizon equivalent of \eqref{EQ::VOPTT} converges to \eqref{EQ::VOPTDYN} as $N$ goes to infinity. The purpose of this paper is to show that the solution of the finite $T-$horizon LP \eqref{EQ::VOPTDYN} provides an almost-optimal solution to the original $N-$arm average reward problem \eqref{EQ::VOPTT}. 

Last, as we will discuss later, taking $\lambda$ as the dual multiplier of the constraint \eqref{EQ::MF2-inf} helps to make a connection between the finite and the infinite-horizon problems \eqref{EQ::VOPTDYN} and \eqref{EQ::VOPTDYN-Tinf}. Our proofs will hold with minor modification by replacing $\lambda$ by $0$ and in practice we do not use this multiplier at all.

\subsection{The Model Predictive Control Algorithm}
\begin{algorithm}[ht]
	\caption{Evaluation of the LP-Update policy}
	\label{algo::MPC}
	\begin{algorithmic}
            \REQUIRE Horizon $\tau$, Initial state $\bS^{(N)}(0)$, model parameters $\langle \Pmat{0}, \Pmat{1}, \Rvec{0}, \Rvec{1} \rangle$, and time horizon $T$
            \STATE Total-reward $\leftarrow 0$.
            \FOR{$t=0$ to $T-1$}
            \STATE $\bu(t) \leftarrow \LPpol{\tau}{\mB(\bS^{(N)}(t))}$.
            \STATE $\bA^{(N)}(t)$ $\leftarrow$ Randomized Rounding $(\bu(t))$ 
            (by using Algorithm~\ref{algo::rounding}).
            \STATE Total-reward $\leftarrow$ Total-reward + $R(\bS^{(N)}(t), \bA^{(N)}(t))$.
            \STATE Simulate the transitions according to \eqref{EQ:MKEVOL} to get $\bS^{(N)}(t + 1)$            \ENDFOR
            \ENSURE{$\text{Average reward}: \frac{\text{Total-reward}}{\THor} \approx \Vlp{\tau}{N}$}
	\end{algorithmic}
\end{algorithm}

The pseudo-code of the LP-update policy is presented in Algorithm \ref{algo::MPC}. The LP-update policy takes as an input a time-horizon $\tau$. At each time-slot, the policy solves the finite horizon linear program \eqref{EQ::VOPTDYN} to obtain $\LPpol{\tau}{\mB}$ that is the value of $\bu(0)$ for an optimal solution to \eqref{EQ::VOPTDYN}. Note that such a policy may not immediately translate to an applicable policy as we do not require that $N\LPpol{\tau}{\mB}$ be integers. We therefore use \emph{randomized rounding} to obtain a feasible policy for our $N$ armed problem, $\bA^{N}(t)$. Applying these actions to each arm gives an instantaneous reward and a next state. This form of control has been referred to as \emph{rolling horizon} \citet{puterman2014markov} but is more commonly referred to as \emph{model predictive control} \citet{DTGLS14}. Our algorithm may be summarized by:
\begin{equation}\label{EQ::ALGO}
    \bS^{(N)}(t) \xrightarrow[]{\text{Solve LP \eqref{EQ::VOPTDYN}}}\LPpol{\tau}{\mB(\bS^{(N)}(t))} \xrightarrow[\text{rounding}]{\text{Randomized}} \bA^{N}(t) \xrightarrow[\text{new state}]{\text{Observe}} \bS^{(N)}(t + 1). 
\end{equation}
We use a randomized rounding procedure similar to \cite{GGY23}, 
see Appendix~\ref{apx:rounding}.

\section{Main theoretical results}
\label{sec:main-theory}

The main result of our paper is to show that a finite-horizon model predictive control algorithm, that we call the LP-update policy, is asymptotically optimal for the infinite horizon bandit problem. Note that this LP-update policy is introduced in \citet{GGY23,GGY23b,ghosh2022indexability} for finite-horizon restless bandit.

\subsection{First result: LP-update is asymptotic optimal}

Our result will show that the LP-update policy is asymptotically optimal as the number of arms $N$ goes to infinity, under an easily verifiable mixing assumption on the transition matrices. To express this condition, for a fixed integer and a sequence of actions $\ba=(a_1\dots a_k)\in\{0,1\}^k$, we denote by $P^{\ba}_{i, j}$ the $(i,j)$th entry of the matrix $\prod_{t=1}^k \Pmat{a_k}$. We then denote by $\rho_k$ the following quantity\footnote{The definition \eqref{eq:ergodic_coef} is related to the notion of ergodic coefficient defined in \citep{puterman2014markov} that used a related constant to prove convergence of value iteration algorithms in span norm for the unconstrained discounted Markov Decision Process.}: 
\begin{align}
    \label{eq:ergodic_coef}
      \syncconst_{k} \triangleq \min_{s, s'\in\sspace, \ba\in\{0,1\}^k} \sum_{s^* \in \sspace}\min\{P^{a_1, a_2, \dots a_k}_{s, s^*}, P^{0, 0, \dots 0}_{s', s^*}\}    
\end{align}
In the above equation, the minimum is taken over all possible initial states $s,s'$ and all possible sequence of actions. The quantity $\syncconst_k$ can be viewed as the probability (under the best coupling) that two arms starting in states $s$ and $s'$ reach the same state after $k$ iterations, if the sequence $a_1\dots a_k$ is used for the first arm while the second arm only uses the action $0$.  The assumption that we use for our result is that $\syncconst_k>0$ for some integer $k$.
\begin{assumption}\label{AS::SA}
    There exists a finite $k$ such that $\syncconst_{k} > 0$.
\end{assumption}
While the assumption may look abstract, note that when the $\Pmat{0}$ matrix is ergodic, it ensures that assumption \ref{AS::SA} holds. Indeed in this case, there exists a $k>0$ such that $P^{0\dots 0}_{ij}>0$ for all $i,j$ which would imply that $\syncconst_{k} > 0$.  Related assumptions and their relationship to Ergodicity can be found in \citet{hernandez2012discrete}.  Assumption~\ref{AS::SA} is similar to the unichain and aperiodic condition imposed in \citet{hong2024unichain}.  Note that this quantity involves the best coupling and not a specific coupling; it is more general than the synchronization assumption from \citet{HXCW23}. \\
We are now ready to state our first theorem, in which we provide a performance bound of the average reward of the LP-update policy, that we denote by $\Vlp{\tau}{N}$.

\begin{theorem}
\label{thm:asymptotic_optimal}
    Assume \ref{AS::SA} with ergodicity constant $\syncconst_{k}$ for some fixed integer $k$. Fix a positive constant $\epsilon > 0$;  then there exists $\tau(\epsilon)$ such that, Algorithm~\ref{algo::MPC} has the following guarantee of performance:
    \begin{equation}
        \Vlp{\tau(\epsilon)}{N} \ge \Vopt{\infty} - 2\epsilon  - \left(\frac{2k}{\syncconst_k}\left[3 +  \frac{2k\alpha}{\syncconst_k}\right] + 1\right)\frac{\alpha N - \lfloor{\alpha N}\rfloor}{N} - \frac{k}{\syncconst_k}\left(3 +  \frac{2k\alpha}{\syncconst_k}\right)
        \sqrt{\frac{|\mcl{S}|}{N}}
    \end{equation}
\end{theorem}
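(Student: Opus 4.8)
The plan is to compare the closed-loop reward of the LP-update policy directly against the relaxation value $\Vopt{\infty}$, splitting the gap into three contributions: a deterministic \emph{mean-field} suboptimality controlled by the horizon $\tau$, a \emph{stochastic tracking} error of order $\sqrt{|\sspace|/N}$, and a \emph{rounding} error of order $(\alpha N-\lfloor\alpha N\rfloor)/N$. The dissipativity framework organizes the first piece. First I would exploit LP duality: with $\lambda$ the dual multiplier of the balance constraint \eqref{EQ::MF2-inf}, write $f(\mB,\bu):=\mB\Pmat{0}+\bu(\Pmat{1}-\Pmat{0})$ and define the rotated stage reward $\rotcost{g}{\mB}{\bu}=\Rvec{0}\cdot\mB+(\Rvec{1}-\Rvec{0})\cdot\bu-\Vopt{\infty}+\lambda\cdot\big(f(\mB,\bu)-\mB\big)$. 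By strong duality and complementary slackness for \eqref{EQ::VOPTDYN-Tinf}, the Lagrangian is maximized over all budget-feasible pairs at the LP optimizer $(\mB^*,\bu^*)$, so $\rotcost{g}{\mB}{\bu}\le 0$ everywhere with equality at the optimizer. This is the dissipativity inequality with storage function $\lambda\cdot\mB$. Telescoping $\lambda\cdot(\mB(t+1)-\mB(t))$ along any feasible trajectory gives the exact identity $\sum_{t=0}^{\tau-1}\big(\Rvec{0}\cdot\mB(t)+(\Rvec{1}-\Rvec{0})\cdot\bu(t)\big)+\lambda\cdot\mB(\tau)=\tau\Vopt{\infty}+\lambda\cdot\mB(0)+\sum_{t=0}^{\tau-1}\rotcost{g}{\mB(t)}{\bu(t)}$, whence $\VoptInf{\tau}{\mB(0)}\le\tau\Vopt{\infty}+\lambda\cdot\mB(0)$ and the deficit of the finite-horizon value equals the (nonpositive) total rotated loss.

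Next I would convert Assumption~\ref{AS::SA} into a mean-field contraction. The coupling coefficient $\syncconst_k$ says two arms can be synchronized in $k$ steps with probability at least $\syncconst_k$; lifted to the aggregate distribution this yields that from any $\mB(0)$ a budget-feasible control steers $\mB(t)$ to within $\eta$ of $\mB^*$ in $O\big(\tfrac{k}{\syncconst_k}\log(1/\eta)\big)$ steps, with the accumulated rotated loss over this transient bounded by a constant of order $k/\syncconst_k$ independent of $\tau$. Together with Step~1 this pins $\VoptInf{\tau}{\mB(0)}$ to $\tau\Vopt{\infty}+\lambda\cdot\mB(0)$ up to a $\tau$-independent constant — the turnpike property. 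I would then run the closed-loop argument: using $\rotcost{W}{\mB}{}:=\VoptInf{\tau}{\mB}-\lambda\cdot\mB-\tau\Vopt{\infty}\le 0$ as a Lyapunov function and the dynamic-programming recursion $\VoptInf{\tau}{\mB}=\Rvec{0}\cdot\mB+(\Rvec{1}-\Rvec{0})\cdot\LPpol{\tau}{\mB}+\VoptInf{\tau-1}{f(\mB,\LPpol{\tau}{\mB})}$, one obtains a per-step inequality whose telescoping over $t$ cancels the boundary terms, giving a deterministic closed-loop guarantee $\ge\Vopt{\infty}-2\epsilon$ for $\tau\ge\tau(\epsilon)$; the two $\epsilon$'s absorb the turnpike transient and the target tolerance $\eta$.

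For the stochastic part I would couple the $N$-arm chain $\bX{t}$ to the mean-field trajectory. A one-step concentration bound gives $\E\|\bX{t+1}-\E[\bX{t+1}\mid\bX{t},\bU(t)]\|=O(\sqrt{|\sspace|/N})$ via \eqref{EQ:evolution_mf}, and the contraction from Step~2 prevents these per-step deviations from accumulating beyond the mixing horizon $\tfrac{k}{\syncconst_k}$: summing a geometric series of amplified errors produces the $\tfrac{k}{\syncconst_k}\big(3+\tfrac{2k\alpha}{\syncconst_k}\big)\sqrt{|\sspace|/N}$ term, where the outer $\tfrac{k}{\syncconst_k}$ is the mixing time and the inner $\tfrac{2k\alpha}{\syncconst_k}$ is the effective Lipschitz sensitivity of the value/policy (itself a geometric sum over the same horizon), giving the nested $(\tfrac{k}{\syncconst_k})^2$ structure. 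The randomized rounding of $N\LPpol{\tau}{\mB}$ is feasible and unbiased up to the fractional budget, so it injects a per-step discrepancy of order $(\alpha N-\lfloor\alpha N\rfloor)/N$; propagated through the same horizon this yields the $\big(\tfrac{2k}{\syncconst_k}[3+\tfrac{2k\alpha}{\syncconst_k}]+1\big)\tfrac{\alpha N-\lfloor\alpha N\rfloor}{N}$ term. Adding the three contributions gives the stated bound.

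The main obstacle, I expect, is the passage from the per-arm probabilistic coupling constant $\syncconst_k$ to a genuine deterministic controllability estimate for the aggregate distribution together with the explicit closed-loop turnpike constants in Steps~2--3: one must produce a \emph{budget-respecting} steering control driving $\mB$ to $\mB^*$ at a rate governed by $\syncconst_k$, and then carry this rate through the Lyapunov recursion with sharp constants. By contrast, Step~4 is comparatively routine concentration, although bookkeeping the $k/\syncconst_k$ factors faithfully through both the stochastic and rounding error propagation requires care.
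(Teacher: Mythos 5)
Your Step~1 (dissipativity via the dual multiplier $\lambda$, the storage function $\lambda\cdot\bx$, and the telescoping identity $\VoptInf{\tau}{\bx}\le \tau \Vopt{\infty}+\lambda\cdot\bx$) matches the paper exactly, and your final three-way error decomposition is the right one. The genuine gap is Step~2. You claim that Assumption~\ref{AS::SA} "lifts" to a deterministic, budget-feasible control steering the mean-field state $\bx(t)$ to within $\eta$ of $\bx^*$ in $O\bigl(\tfrac{k}{\syncconst_k}\log(1/\eta)\bigr)$ steps with transient rotated loss $O(k/\syncconst_k)$. Assumption~\ref{AS::SA} is a per-arm coupling condition between an \emph{arbitrary} action sequence and the all-zero sequence; it is strictly weaker than controllability of the aggregate dynamics and gives you no handle on reaching the particular LP optimizer $\bx^*$ (which may even be an unstable fixed point of the closed-loop map, as in the paper's Example from Gast et al.; Theorem~\ref{thm:asymptotic_optimal} must hold there too). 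This steering route is essentially the "align and steer" strategy that requires a separate controllability hypothesis, which the paper deliberately avoids. The paper's actual resolution is different: it uses $\syncconst_k$ only to prove that the \emph{discounted value function} of the deterministic control problem is Lipschitz in $\bx$ with constant $k/\syncconst_k$ \emph{uniformly in the discount factor} (by an $M$-particle coupling argument in span seminorm), then takes a vanishing-discount limit to obtain a gain $g^\star$ and a Lipschitz bias $h^\star$ satisfying the average-reward fixed-point equation. Boundedness of $\Costmin{\infty}{\bx}=\lambda\cdot\bx+h^\star(\bx)$ combined with the monotonicity of $\tau\mapsto\Costmin{\tau}{\bx}$ (which \emph{is} a consequence of dissipativity) then yields the existence of $\tau(\epsilon)$ --- no trajectory ever needs to be shown to approach $\bx^*$. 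A related omission: you need $\tau(\epsilon)$ to work uniformly over all $\bx$ visited by the closed loop; the paper gets this from uniform convergence of $\VoptInf{\tau}{\bx}-\tau g^\star$ (monotone convergence of continuous functions to a continuous limit on the compact simplex), and your proposal does not address uniformity.

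A second, more minor discrepancy is in Step~4. There is no "geometric series of amplified errors over the mixing horizon" in the paper's argument, and the policy $\bx\mapsto\LPpol{\tau}{\bx}$ need not be Lipschitz under Assumption~\ref{AS::SA} alone (its Lipschitz continuity is only invoked for Theorem~\ref{thm:expo_bound} under the extra unicity assumption). The accumulation of stochastic errors is handled exactly by the telescoping of $\Costmin{\infty}{\bX{t}}-\lambda\cdot\bX{t}$; each time step then contributes only the one-step mismatch $\E\|\bX{t+1}-\Phi(\bX{t},\bu(t))\|_1\le\sqrt{|\sspace|/N}+2(\alpha N-\lfloor\alpha N\rfloor)/N$, multiplied by the Lipschitz constant of $\Costmin{\infty}{\cdot}$ plus $\|\lambda\|_\infty$. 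The nested factor $\tfrac{2k\alpha}{\syncconst_k}$ you attribute to "policy sensitivity" is in fact the explicit bound $\|\lambda\|_\infty\le\tfrac{k}{\syncconst_k}\bigl(1+\tfrac{\alpha k}{\syncconst_k}\bigr)$ on the Lagrange multiplier, proved by a separate span-norm argument on the dual of the single-arm LP. Your constants are reverse-engineered to match the statement, but the mechanism you describe would not produce them.
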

As $\Vopt{N}\le \Vopt{\infty}$, this result shows that the LP-update policy becomes optimal as $\tau$ and $N$ go to infinity. The sub-optimality gap of LP-update decomposes in three terms. The first term corresponds to an upper-bound on the sub-optimality of using the finite-horizon $\tau$ when solving the LP-problem \eqref{EQ::VOPTDYN}. Our proof shows that one can take $\tau(\epsilon) = \mcl{O}(\frac{1}{\epsilon})$, and in the numerical section, we will show that choosing a small value like $\tau=10$ is sufficient for most problems. The second term corresponds to a rounding error: for all $\alpha$ such that $N\alpha $ is an integer, this term equals $0$.  The dominating error term is the last term, $O(1/\sqrt{N})$. It corresponds to the difference between the $N$-arm problem and the LP-problem with $\tau=+\infty$. 

\subsection{Second result: Exponentially small gap under a stability condition}

Theorem~\ref{thm:asymptotic_optimal} shows that the sub-optimality gap of the LP-update policy is of order $O(1/\sqrt{N})$ under quite general conditions. While one could not hope for a better convergence rate in general, there are known cases for which one can construct policies that become optimal exponentially fast when $N$ goes to infinity. This is the case for Whittle index under the conditions of indexability, uniform global attractor property (UGAP), non-degeneracy and global exponential stability \cite{GGY23}. More details can be found in Appendix \ref{apx:review}. 
In this section, we show that LP-update also becomes optimal exponentially fast under essentially the same conditions as the ones presented in \cite{HXCW24}. The first condition that we impose is that the solution of the above LP-problem is non-degenerate (as defined in \citet{GGY23,HXCW24}). 
\begin{assumption}[Non-degenerate]
    \label{AS::non-degenerate}
    We assume that the solution ($\bx^*,\bu^*)$ to the linear program \eqref{EQ::VOPTDYN-Tinf} is unique and satisfies that $x^*_i>0$ for all $i\in\sspace$ and that there exists a (unique) state $i^*\in\sspace$ such that $0 < u^*_{i^*} < x^*_{i^*}$.
\end{assumption}

The second condition concerns the local stability of a map around the fixed point.
\begin{assumption}
    \label{AS::stable}
    Assume~\ref{AS::non-degenerate} and let $P^*$ be the $|\sspace|\times|\sspace|$ matrix such that 
    \begin{align*}
        P^*_{ij} = \left\{\begin{array}{ll}
            P^0_{ij} &\text{ if $i$ is such that $u^*_i<x^*_i$.}\\
            P^1_{ij}-P^{1}_{i^*j}+P^{0}_{i^*j} &\text{ if $i$ is such that $u^*_i=x^*_i$.}
        \end{array}\right.
    \end{align*}
    
    We assume that the matrix $\Pmat{*}$ is stable, \emph{i.e.}, that the $l_2$ norm of all but one of the Eigenvalues of $\Pmat{*}$ are strictly smaller than 1.
\end{assumption}
Both these conditions are equivalent to the assumption of non-degeneracy and local stability defined in \cite{HXCW24}.

The last condition that we impose is a technical assumption that simplifies the proofs. 
\begin{assumption}[Unicity]
    \label{AS::unique}
    We assume that for all $\bx\in\Delta_{\sspace}$, the LP program \eqref{EQ::VOPTDYN} has a unique solution.
\end{assumption}
This assumption guarantees that the LP-update policy is uniquely defined. Note that the assumptions of unicity of the fixed point are often made implicitly in papers when authors talk about ``the'' optimal solution instead of ``an'' optimal solution. The assumptions of \emph{non-degeneracy} and \emph{unicity} are not restrictive since, degenerate solutions essentially occupy a zero-measure set: if a problem is degenerate (or has multiple solutions), then adding a small noise to the parameters will make this problem non-degenerate (or will guarantee the uniqueness of the solution). The situation is, however, different for the local stability assumption: if Assumption~\ref{AS::stable} does not hold for a given problem, then it will not hold for any problem that is close enough to this problem. Moreover, we believe that this assumption is necessary to derive a tighter bound in terms of the number of arms (\cite{HXCW24}).

\begin{theorem}
    \label{thm:expo_bound}
    Assume \ref{AS::SA},  \ref{AS::non-degenerate}, \ref{AS::stable}, and \ref{AS::unique}; then there exist constants $C',C''>0$ (independent of $N$ and $\tau$) such that for all $\epsilon>0$, with $\tau(\epsilon)$ (set according to Theorem ~\ref{thm:asymptotic_optimal}) and $N$ such that $\alpha N$ is an integer, Algorithm \ref{algo::MPC} has the following guarantee of performance:
    \begin{equation}
        \Vlp{\tau(\epsilon)}{N} \ge \Vopt{\infty} - 2\epsilon - C'e^{-C'' N}.
    \end{equation}
\end{theorem}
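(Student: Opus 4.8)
The plan is to localize the analysis around the mean-field fixed point $(\bx^*,\bu^*)$ of \eqref{EQ::VOPTDYN-Tinf} and to show that, once concentrated there, the $N$-arm system incurs essentially zero reward loss except on an event of probability $e^{-\Omega(N)}$. Concretely, I would start from the dissipativity/value-function decomposition already used to prove Theorem~\ref{thm:asymptotic_optimal}: the sub-optimality gap $\Vopt{\infty}-\Vlp{\tau(\epsilon)}{N}$ splits into a finite-horizon term bounded by $2\epsilon$ (by the choice of $\tau(\epsilon)$) plus the long-run time-average of a nonnegative rotated stage cost evaluated along the trajectory $(\bX{t},\bU(t))$. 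The whole point is to show this second term is exponentially small rather than $O(1/\sqrt N)$, and for that the key structural fact is non-degeneracy (Assumption~\ref{AS::non-degenerate}): at $\bx^*$ every state except the pivot $i^*$ is either fully pulled ($u^*_i=x^*_i$) or not pulled ($u^*_i=0$), and the pivot satisfies $0<u^*_{i^*}<x^*_{i^*}$.

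First I would establish the local policy structure. Using Assumptions~\ref{AS::non-degenerate} and~\ref{AS::unique} together with the continuity of the finite-horizon LP \eqref{EQ::VOPTDYN} in its initial condition, I would exhibit a radius $\delta>0$ such that for every $\bx\in B_\delta(\bx^*)$ and every $\tau\ge\tau(\epsilon)$ the LP-update control $\LPpol{\tau}{\bx}$ keeps the same active set: it pulls all arms in states with $u^*_i=x^*_i$, pulls none in states with $u^*_i=0$, and uses the pivot $i^*$ to meet the budget exactly, i.e. $u_{i^*}=\alpha-\sum_{i:\,u^*_i=x^*_i}x_i$. Within this region the control is an affine function of $\bx$ and the mean-field update \eqref{EQ::MF2} reduces to $\bx\mapsto \bx\Pmat{*}$ with $\Pmat{*}$ the matrix of Assumption~\ref{AS::stable}. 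The crucial quantitative input is that the non-degeneracy margins $u^*_{i^*}>0$ and $x^*_{i^*}-u^*_{i^*}>0$ are $\Omega(1)$, so the pivot can absorb budget fluctuations of size up to a fixed constant without the affine structure breaking.

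Next I would prove exponential concentration. By stability of $\Pmat{*}$ (Assumption~\ref{AS::stable}) there is a quadratic Lyapunov function on the tangent space of $\Dels$ that contracts geometrically under $\bx\mapsto\bx\Pmat{*}$; restricted to $B_\delta(\bx^*)$ this gives a drift inequality for the stochastic process. Because the arms transition independently, the martingale increment $\bX{t+1}-\E[\bX{t+1}\mid\bX{t}]$ is an average of $N$ independent bounded terms and is therefore sub-Gaussian with variance proxy $O(1/N)$; combining this with the geometric drift (a standard sub-Gaussian / Freidlin--Wentzell argument) yields, for a suitable $\delta$, that $\ProbP(\|\bX{t}-\bx^*\|>\delta)\le e^{-\Omega(N)}$ uniformly in $t$, and in particular that the $\Omega(1)$ budget margin is exhausted only with probability $e^{-\Omega(N)}$. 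Entry into $B_{\delta/2}(\bx^*)$ in a bounded number of steps is guaranteed by the global convergence of the mean-field dynamics established through the dissipativity argument of Theorem~\ref{thm:asymptotic_optimal}, transported to the stochastic system via the same concentration and Assumption~\ref{AS::SA}.

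Finally I would assemble the bound by observing that on the good event the loss is not merely small but first-order zero. Since the stage reward $\Rvec{0}\cdot\bx+(\Rvec{1}-\Rvec{0})\cdot\bu$ is linear and, inside $B_\delta(\bx^*)$, the affine pivot policy forces $\E[\bX{t}]=\bx^*+O(e^{-\Omega(N)})$ and hence $\E[\bU(t)]=\bu^*+O(e^{-\Omega(N)})$, the expected per-step reward equals $\Vopt{\infty}$ up to $O(e^{-\Omega(N)})$; on the complementary event (probability $e^{-\Omega(N)}$) the per-step loss is at most a constant because rewards lie in $[0,1]$. Time-averaging and adding the $2\epsilon$ finite-horizon error gives $\Vopt{\infty}-\Vlp{\tau(\epsilon)}{N}\le 2\epsilon+C'e^{-C''N}$, using $\alpha N\in\mathbb{Z}$ to kill the rounding term. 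I expect the main obstacle to be the exponential stochastic-stability step: making the Lyapunov drift rigorous on a region where the affine structure can fail near its boundary, controlling excursions that leave and re-enter $B_\delta(\bx^*)$, and upgrading the naive $O(1/\sqrt N)$ fluctuation bound to a genuine $e^{-\Omega(N)}$ tail; a secondary difficulty is verifying that the finite-horizon LP inherits the infinite-horizon active-set structure uniformly on $B_\delta(\bx^*)$ for $\tau\ge\tau(\epsilon)$.
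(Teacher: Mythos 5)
Your proposal is correct and follows essentially the same route as the paper: the paper's proof rests on exactly your two ingredients, namely a lemma showing that non-degeneracy and stability make $\mu_\tau(\cdot)$ affine on a neighborhood of $\bx^*$ (Lemma~\ref{lem:locally_linear}) and a lemma showing $\Pr[\|\bX{t}-\bx^*\|\ge\varepsilon]\le e^{-CN}$ as $t\to\infty$ (Lemma~\ref{lem:concentration}), plugged into the dissipativity decomposition of Theorem~\ref{thm:asymptotic_optimal}. The only cosmetic difference is the final cancellation: the paper exploits linearity of $L_\tau(\cdot)-\lambda\cdot(\cdot)$ on the neighborhood together with unbiasedness of the transition and of the rounding to make the good-event term exactly zero, whereas you track $\E[\bX{t}]\to\bx^*$ directly through the affine dynamics --- both are valid and rely on the same linearity-plus-unbiasedness mechanism.
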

The first term of the bound above is identical to the one used in Theorem~\ref{thm:asymptotic_optimal}. What is more important is that the last term  decays exponentially with $N$. The results of Theorem \ref{thm:asymptotic_optimal} meet, order-wise the best known lower bounds \emph{without local stability conditions} from \cite{HXCW24} whereas Theorem \ref{thm:expo_bound} meets the best exponential bounds \emph{with stability conditions.} Our theorem shows that the LP-update policy also benefit from similar performance guarantees (while performing better for small values of $N$).  Previous results on LP-update policies were only able to match our bounds under the UGAP assumptions. \emph{Our relaxation of these assumptions can be regarded as one of the main contributions of our works.}

The bounds that we obtain measure the performance gap $\Vlp{\tau(\epsilon)}{N}-\Vopt{\infty}$ which is the difference between the value of the LP-update policy for the system of size $N$ and the value of the relaxed LP. This performance gap is known to be of order at least $\Omega(1/\sqrt{N})$ for general degenerate problems and at least $\exp{-O{N}}$ for general non-degenerate problems.  As $\Vopt{\infty}\ge\Vopt{N}\ge$, this implies that the same bounds of $O(1/\sqrt{N})$ and $\exp(-\Omega(N))$ hold for the sub-optimality gap $\Vlp{\tau(\epsilon)}{N}-\Vopt{N}$.  In a recent paper, \cite{yan2024optimalgap} directly study this sub-optimality gap directly without comparing the benchmark to $\Vopt{\infty}$ for the finite horizon problem.

\section{Proofs: Main Ideas}
\label{sec:proof}
In this section, we provide the major ingredients of the proofs of the two main theorems. We provide more details for the proof of Theorem~\ref{thm:asymptotic_optimal} because this is the more original of the two. The proofs of all lemmas and some details of computation are deferred 
to the supplementary material.


\subsection{Sketch for Theorem \ref{thm:asymptotic_optimal}:}

Three major components are required in order to complete the proof. 

\paragraph{Part 1, Properties of the dynamical control problem \eqref{EQ::VOPTDYN}:} For $\bx,\bu$, we denote by $\Phi(\bx, \bu) := \bx\Pmat{0} + \bu (\Pmat{1} - \Pmat{0})$ the deterministic transition kernel, and we recall that the instantaneous reward is $R(\bx, \bu) := \Rvec{0}\cdot \bx + (\Rvec{1} - \Rvec{0})$. In Lemma~\ref{LEM::EXGH},
we establish several properties that relate the finite-horizon problem \eqref{EQ::VOPTDYN} and the finite-horizon problem \eqref{EQ::VOPTDYN-Tinf} that hold under Assumption~\ref{AS::SA}. First, we show that the average gain of the finite-time horizon problem \eqref{EQ::VOPTDYN} converges to the average gain of the infinite-horizon problem, that is $\lim_{\tau\to\infty} W_\tau(\bx)/\tau = g^*$ for all $\bx$. Second, we also show that the \emph{bias function} $h^{\star}(\cdot) : \Dels \to \R$ given by :
\begin{equation}\label{EQ::Defbias}
    h^{\star}(\bx) := \lim_{\tau \to \infty} \VoptInf{\tau}{\bx} - \tau g^{\star}
\end{equation} 
is well defined and Lipschitz-continuous with constant $k/\syncconst_k$ and the convergence in \eqref{EQ::Defbias} is uniform in $\bx$. Moreover, the gain and the bias function satisfy  
\begin{equation}\label{EQ::FIXEDPT}
    h^{\star}(\bx) + g^{\star} = \max_{\bu \in \mcl{U}(\bx)} R(\bx, \bu) + h^{\star}(\Phi(\mB, \bu)).
\end{equation} 
While both these definitions are well known in the average reward \emph{unichain MDP}, \emph{single arm} setting \emph{without constraints}, 
we establish these definitions for the \emph{constrained, deterministic problem}. Note the difference in the second term on the right hand side of the fixed point equation: typically the right hand side in the single arm problem takes the form $R(s, a) + \sum_{s'}P(s'|s, a) h(s')$, whereas here the expectation is inside rather than outside! 


\paragraph{Part 2, \emph{Dissipativity} and \emph{rotated cost}}
Let $(\bx^*, \bu^*)$ be the\footnote{For clarity we will present the proof as if  this point is unique although the proof also holds without this requirement.} optimal solution of the infinite-horizon problem \eqref{EQ::VOPTDYN-Tinf}, and let $l(\mB, \bu) := g^{\star} - \Rvec{0}\cdot \mB - (\Rvec{1} - \Rvec{0})\cdot\bu$. Following \cite{DTGLS14}, an optimal control problem with stage cost $l(\bx, \bu)$ and dynamic $\bx(t + 1) := \Phi(\bx, \bu)$ is called \emph{dissipative} if there exists a \emph{storage function} $\lambda : \Dels \to \R$ that satisfies the following equation:
\[
    \rotcost{l}{\bx}{\bu} := l(\mB, \bu) + \lambda(\mB) - \lambda(\Phi(\mB, \bu)) \geq l(\bx^*, \bu^*) = \rotcost{l}{\bx^*}{\bu^*} = 0.
\]
The cost, $\rotcost{l}{\mB}{\bu}$ is called the \emph{rotated cost function}.

In Lemma~\ref{LEM::DISS}, we show
that our problem is dissipative by setting the storage function $\lambda(x) := \lambda \cdot x $, where $\lambda$ is the optimal dual multiplier of the constraint \eqref{EQ:VOPTDYN1}. It is important to note, the rotated cost so defined is always non-negative.

\paragraph{Part 3, MPC is optimal for the deterministic control problem} By using our definition of rotated cost, we define the following minimization problem
\begin{align}
    \Costmin{\tau}{\mB} &:= \min\sum_{t = 0}^{\tau - 1}\rotcost{l}{\mB(t)}{\bu(t)}.\nonumber\\
    &\text{Subject to:} \text{ $\bx(t)$  and $\bu(t)$ satisfy \eqref{EQ:MeanField} for all $t\in\{0,\tau-1\}$.}
    \label{EQ::COSTMINPROB}
\end{align}
By dissipativity, $L_\tau(\bx)$ is monotone increasing. Moreover, we have that $\Costmin{\tau}{\mB} \geq 0 = \Costmin{\tau}{\bx^*}$. Hence, the problem is \emph{operated optimally at the fixed point}, $(x^*, u^*)$. The optimal operation at a fixed point is a key observation, made by several works, \citep{Avrachenkov2024, GGY23b, yan2024, HXCW23}, we recover this result as a natural consequence of dissipativity.
By Lemma~\ref{lem:C_vs_W},
$\Costmin{\tau}{\mB}= \tau g^{\star} - \VoptInf{\tau}{\bx} + \lambda \cdot \bx$ because\footnote{In fact, the two control problems are equivalent: $\bu^*$ is optimal for \eqref{EQ::VOPTDYN} if and only if it is for \eqref{EQ::COSTMINPROB}. } of a telescopic sum of the terms $\lambda \cdot \bx(t)$.  Combining this with \eqref{EQ::Defbias} implies that $\lim_{\tau\to\infty}\Costmin{\tau}{\bx} = h^{\star}(\bx) + \lambda \cdot \bx $. We will denote this limit by $\Costmin{\infty}{\bx}$. 
As $L_\tau(\bx)$ is monotone, it follows that, for any $\epsilon > 0$, there exists a $\tau(\epsilon)$ such that 
$|\Costmin{\infty}{\bx} - \Costmin{\tau(\epsilon)}{\bx}| < \epsilon$. Note, the monotonicity of $L_\tau(\bx)$ is crucial to obtain this property, leading us to use dissipativity. Putting the components together, we can now prove the final steps.

\paragraph{Proof of Theorem~\ref{thm:asymptotic_optimal}}

 We begin by considering the difference between the optimal value and the model predictive value function. We let $\bX{t}$ denote the empirical distribution of the arms at time $t$ and $\bU(t)$ be the corresponding empirical distribution of the actions. We drop the superscript $N$ for convenience in the proof below, but it should be noted that $\bU(t)$ is always obtained from a randomized rounding procedure and hence, is dependent on $N$. 
\begin{align}
\Vopt{N} - \Vlp{\tau}{N} 
&\le \lim_{T\to\infty}\frac{1}{T}\sum_{t = 0}^{T-1} \E \left[g^{\star} - R(\mX(t), \mU(t)) \right]\nonumber\\
&= \lim_{T\to\infty}\frac{1}{T}\sum_{t = 0}^{T-1} \E \left[g^{\star} - \Rvec{0}\cdot \mX(t) - (\Rvec{1} - \Rvec{0})\cdot\bu(t) +  (\Rvec{1} - \Rvec{0}) \cdot \left(\bu(t) - \bU(t)\right)\right]\nonumber\\
& \le \lim_{T \to \infty} \frac{1}{T}\sum_{t = 0}^{T-1} \E \left[l(\mX(t), \bu(t)) \right] + \frac{\alpha N - \lfloor{\alpha N}\rfloor}{N}\label{EQ:EQV3}
\end{align}
The first inequality follows from the well known result $\Vopt{N} \leq g^{\star}$, \citep{yan2024, GGY23, HXCW23}. The last inequality follows from randomized rounding 
and Lemma \ref{apx::lem_round}.
Let $(A):=\lim_{T\to\infty} \frac{1}{T}\sum_{t = 0}^{T-1} \E \left[l(\mX(t), \bu(t)) \right]$ denote the first term of \eqref{EQ:EQV3}. Adding and subtracting the storage cost we have:
\begin{align*}
    &(A)  = \lim_{T\to\infty} \frac{1}{T}\sum_{t = 0}^{T-1} \E \left[\rotcost{l}{\mX(t)}{\bu(t)} - \lambda\cdot\mX(t) + \lambda\cdot\Phi(\mX(t), \bu(t)) \right].
\end{align*}
Now note, by the dynamic programming principle $\rotcost{l}{\bx}{\bu} = \Costmin{\tau}{\bx} - \Costmin{\tau - 1}{\Phi(\bx, \bu)}$. Further, for any state $\bx$ and its corresponding control $\bu$ consider: $$\Costmin{\tau}{\bx} - \Costmin{\tau - 1}{\Phi(\bx, \bu)} = \Costmin{\tau}{\bx} - \Costmin{\infty}{\Phi(\bx, \bu)} + \Costmin{\infty}{\Phi(\bx, \bu)} - \Costmin{\tau - 1}{\Phi(\bx, \bu)}.$$
By choosing $\tau = \tau(\epsilon)$ we have, $\max \{|\Costmin{\tau - 1}{\bx} - \Costmin{\infty}{\bx}|, |\Costmin{\tau}{\bx} - \Costmin{\infty}{\bx}|\} < \epsilon$.
Plugging these inequalities together, introducing a telescopic sum, and manipulating the order of variables slightly (shown in Appendix 
~\ref{apx:proof_thm1_details}), the term $(A)$ is smaller than
\begin{align}
    \label{EQ:EQV4}
    2\epsilon + \lim_{T\to\infty}  \frac{1}{T}\sum_{t = 0}^{T - 1} \E \left[\Costmin{\infty}{\mX(t + 1)} - \Costmin{\infty}{\Phi(\mX(t), \bu(t))} - \lambda\cdot [\mX(t {+} 1) {-} \Phi(\mX(t), \bu(t))] \right].
\end{align}
By Lemma~1 of \cite{GGY23b}, we have $\|\E [\mX(t {+} 1)| \mX(t), \bU(t)] - \Phi(\mX(t), \bU(t))\|_1 \le \sqrt{|S|}/\sqrt{N}$. Moreover, our rounding procedure implies that $\|\Phi(\mX(t), \bU(t))-\Phi(\mX(t), \bu(t))\|_1\le C_\Phi(\alpha N - \lfloor\alpha N\rfloor)/N$ where $C_\Phi\le2$ is the Lipschitz-constant of the map $\Phi$ 
(see \ref{apx:C5}). With a few extra steps (see \ref{apx:C5})we get:
\begin{align}
    \label{EQ:EQV4b}
    (A) \le 2\epsilon + (C_{L} + \|\lambda\|_\infty) \left(\frac{\sqrt{|S|}}{\sqrt{N}} + 2\frac{\alpha N - \lfloor\alpha N\rfloor}{N} \right),
\end{align}
where $C_{L}$ is a Lipschitz constant for the limiting map $\Costmin{\infty}{\cdot}$ and $\|\lambda\|_\infty$ is the infinity-norm of the Lagrange multiplier $\lambda$. In Appendix 
Appendix \ref{apx:MPC}[Lemma \ref{lem:C_vs_W}],
we show that $C_{L}$ is bounded above by $k/\rho_k + \|\lambda\|_\infty$, further, in 
Appendix \ref{APX:LAGBOUND}
we show that $\|\lambda\|_\infty\le (k/\rho_k)(1+\alpha k/\rho_k)$.  The theorem follows by substituting these values into \eqref{EQ:EQV4b} and adding a term $(\alpha N - \lfloor{\alpha N}\rfloor)/N$ coming from \eqref{EQ:EQV3}. See Appendix 
~\ref{apx:C5}.

\subsection{Theorem~\ref{thm:expo_bound}}

The proof Theorem~\ref{thm:expo_bound} is more classical and follows the same line as the proof of the exponential asymptotic optimality of \cite{GGY23,HXCW24}. The first ingredient is Lemma~\ref{lem:locally_linear} that
 shows that, by non-degeneracy, there exists a neighborhood of $\bx^*$ such that $\mu_\tau(\bx^*)$ is locally linear around $\bx^*$. The second ingredient is Lemma~\ref{lem:concentration}
 that shows that $\bX{t}$ is concentrated around $\bx^*$ as $t\to\infty$, \emph{i.e.}, for all $\epsilon>0$, there exists $C>0$ such that $\lim_{t\to\infty} \Pr[\|\bX{t}-\bx^*\|\ge \varepsilon]\le e^{-C N}$. Combining the two points imply the result. For more details, see 
 Appendix~\ref{apx:PFcomp_expo}.

\begin{figure}[b]
    \centering
    \begin{tabular}{@{}c@{}c@{}c@{}}
        \includegraphics[width=0.33\linewidth]{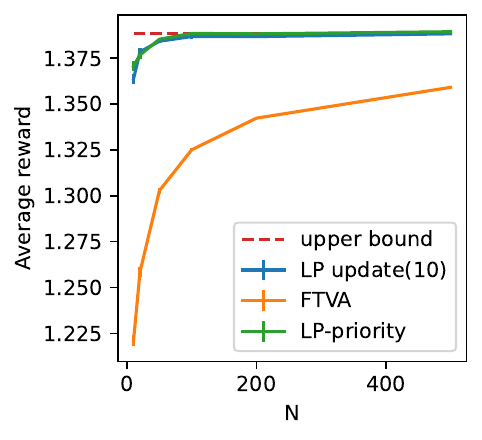}
        &\includegraphics[width=0.33\linewidth]{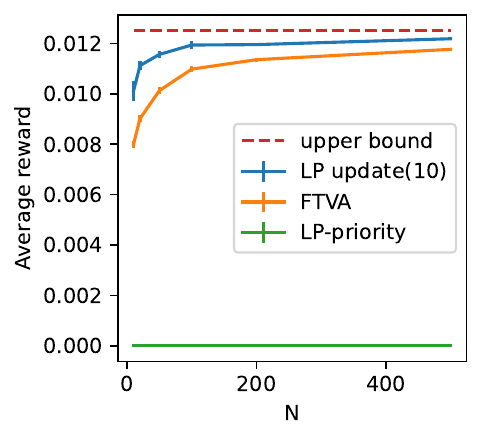}
        &\includegraphics[width=0.33\linewidth]{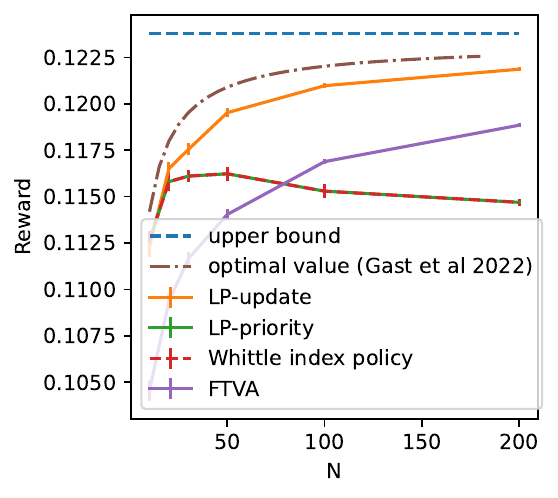}\\
        (a) Random example
        &(b) Example \cite{HXCW23}
        &(c) Example \cite{chen:tel-04068056}
    \end{tabular}
    \caption{Performance as a function of $N$}
    \label{fig:perf_function_of_N}
\end{figure}

\section{Numerical illustrations}
\label{sec:numerical}

We illustrate numerically the performance of the LP-update policy, showing that it performs very well in practice and outperforms classical heuristics in most cases. We choose to compare against two heuristics of the literature: the LP-priority policy of \cite{GGY23} and the follow the virtual advice policy (FTVA) of \cite{HXCW23}. The LP-priority policy computes a solution to \eqref{EQ::VOPTDYN-Tinf} and uses it to compute a priority order on states, arms are then pulled according to this priority order.  The LP-priority policy is an index policy ---similarly to Whittle index policy--- but has the advantage of being always defined, whereas Whittle index policy is only defined under a technical condition known as indexability. The LP-update policy has only been shown to be optimal under the UGAP assumption. 
FTVA constructs a virtual collection of $N$ arms which follow the solution to \eqref{EQ::VOPTDYN-Tinf}. When a virtual arm is pulled, the corresponding real arm is pulled if the budget allows it, otherwise the real arm is left alone. If the virtual and real arm choose the same actions, they evolve identically, otherwise independently. We choose these two heuristics because they are natural and simple to implement and do not rely on hard-to-tune hyperparameters (further discussion for our choice can be found in the supplementary material along with choices for the parameters, see Appendix ~\ref{apx:parameters}).
We provide the parameters in Appendix~\ref{apx:parameters}.All codes are provided in supplementary material.

\paragraph{Comparison on representative examples} For our first comparison, we consider three representative examples: a randomly generated example (in dimension 8), the main example used in \cite{HXCW23} and described in their Appendix~G.2 (abbreviated Example~\cite{HXCW23} in the following), and Example~2 of Figure~7.4 of \cite{chen:tel-04068056} (abbreviated Example~\cite{chen:tel-04068056}). We report the result in Figure~\ref{fig:perf_function_of_N}. We observe that in all three cases, both the LP-update and the FTVA policies are asymptotically optimal, but the LP-update policy outperforms FTVA substantially. The situation for LP-priority is quite different: as shown in \cite{GGY23}, in dimension 8, the LP-priority policy is asymptotically optimal for most of the randomly generated examples. This is the case for example (a) of Figure~\ref{fig:perf_function_of_N} for which LP-update and LP-priority give an essentially equivalent performance and are essentially almost optimal. The authors of \cite{chen:tel-04068056} provide the numerical value of the optimal policy for Example~\cite{chen:tel-04068056}, which can be done because $|\sspace|=3$. 
The LP-update is very close to this value.

\paragraph{System dynamics:} To explore the difference of performance between the LP-update policy and FTVA, we study the dynamics of the different policies for the first two examples of Figure~\ref{fig:perf_function_of_N}. 
Figure~\ref{fig:as_function_of_time} 
plots the evolution of the system over time (for a single trajectory). We observe that if the distance between $X_t$ and $x^*$ are similar for LP-update and FTVA, the rotated cost is much smaller for LP-update, which explains its good performance. 
\begin{figure}[ht]
    \centering
    \begin{tabular}{@{}c@{}c@{}@{}c@{}c}
        \includegraphics[width=0.25\linewidth]{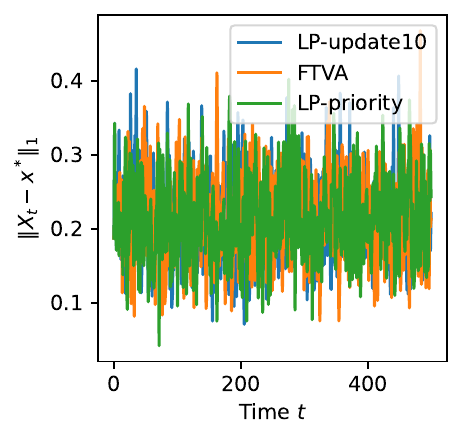}
        &\includegraphics[width=0.25\linewidth]{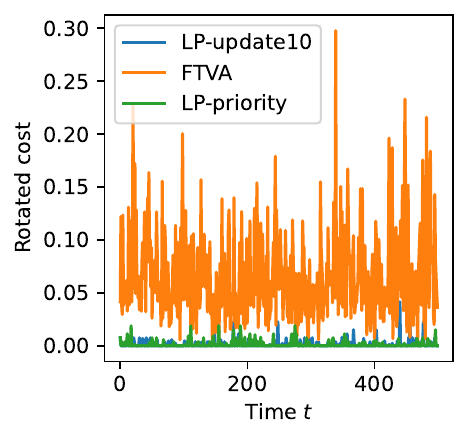}
        &\includegraphics[width=0.25\linewidth]{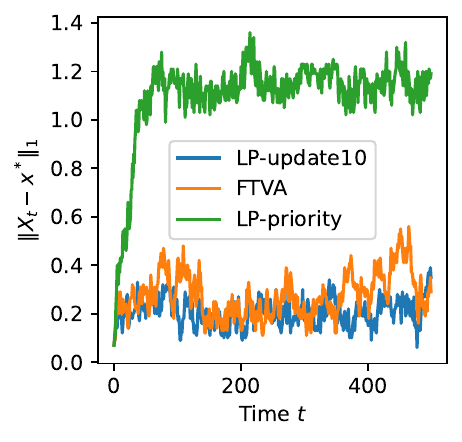}
        &\includegraphics[width=0.25\linewidth]{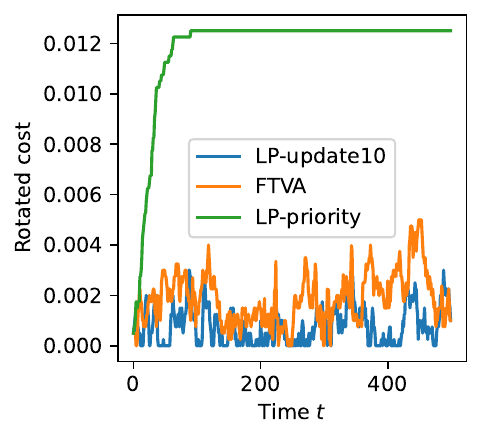}
        \\
        \multicolumn{2}{c}{(a) Random example}
        &\multicolumn{2}{c}{(b) Example from Hong et al.}
    \end{tabular}

    \caption{Distance $\|X_t-x^*\|$ and rotated cost as a function of time for $N=100$.}
    \label{fig:as_function_of_time}
\end{figure}
To explore the system dynamics in more details, we focus on Example~\cite{chen:tel-04068056} and study the behavior of the LP-update, FTVA and LP-priority policy. In Figures~\ref{fig:example-chen}(a,b,c), we present a trajectory of each of the three policies: each orange point corresponds to a value of $X_t\in\Delta_\sspace$ (as this example is in dimension $|\sspace|=3$, the simplex $\Delta_\sspace$ can be represented as a triangle). This example has an unstable fixed point (Assumption~\ref{AS::stable} is not satisfied). Both LP-update and FTVA concentrate their behavior around this fixed point but the LP-priority policy exhibits two modes. 
The rotated cost (Figure~\ref{fig:example-chen}(d)) 
is much smaller for LP-update when compared to FTVA. This explains why the LP-update performs better as shown in Figure~\ref{fig:perf_function_of_N}(c).
\begin{figure}[ht]
    \centering
    \begin{tabular}{@{}c@{}c@{}c@{}c@{}}
        \includegraphics[width=.25\linewidth]{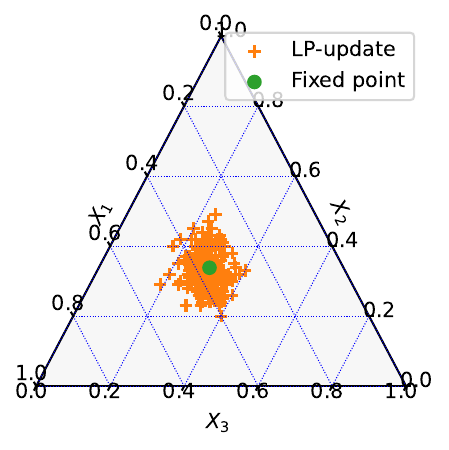}
        &\includegraphics[width=.25\linewidth]{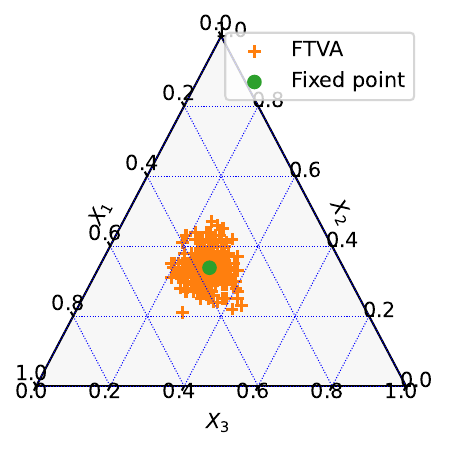}
        &\includegraphics[width=.25\linewidth]{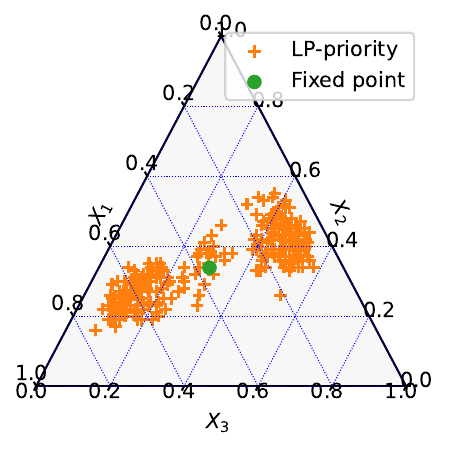}
        &\includegraphics[width=.25\linewidth]{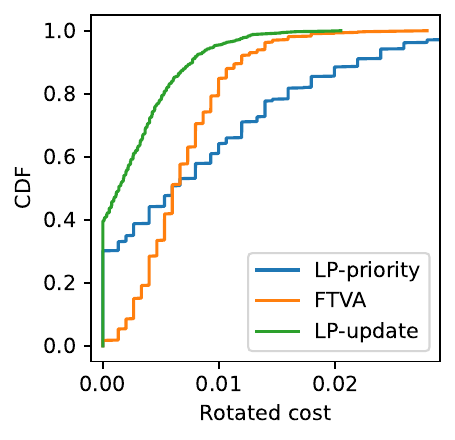}\\
        (a) LP-update
        &(b) FTVA
        &(c) LP-priority        
        &(d) CDF of rotated cost
    \end{tabular}

    \caption{Example~2 from Gast et al. 22. Simulation for $N=100$.}

    \label{fig:example-chen}
\end{figure}

\paragraph{Influence of parameters} Figure~\ref{fig:gain_function_parameters}, studies the influence of different parameters on the performance of LP-update. In each case, we generated $20$ examples and plot the average ``normalized'' performance among these examples. 
``normalized'' here means we divide the performance of the policy by the value of the LP problem \eqref{EQ::VOPTDYN-Tinf}. The first plot~\ref{fig:gain_function_parameters}(a) shows that the influence of the parameter $\tau$ is marginal (the curves are not distinguishable for $\tau\in\{3,5,10\}$).  Plot \ref{fig:gain_function_parameters}(b) indicates that the sub-optimality gap of LP-update is not too sensitive to the state space size unlike FTVA which degrades when $|\sspace|$ is large probably because there are fewer synchronized arms. The last plot \ref{fig:gain_function_parameters}(c) studies the performance as a function of the budget parameter, $\alpha$. 
\begin{figure}[t]
    \centering
    \begin{tabular}{@{}c@{}c@{}c@{}}
        \includegraphics[width=0.33\linewidth]{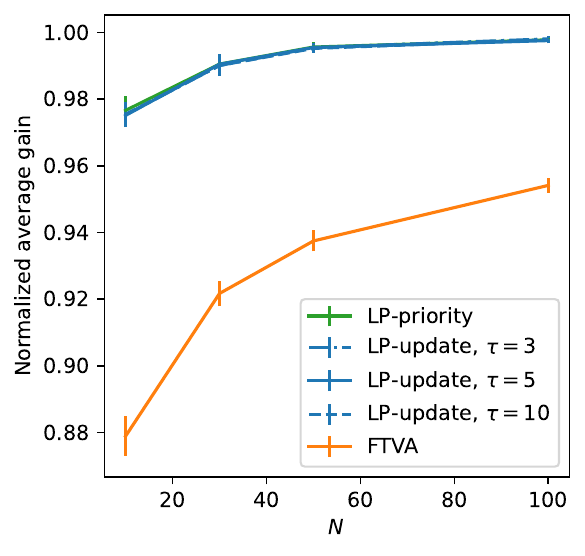}
        &\includegraphics[width=0.33\linewidth]{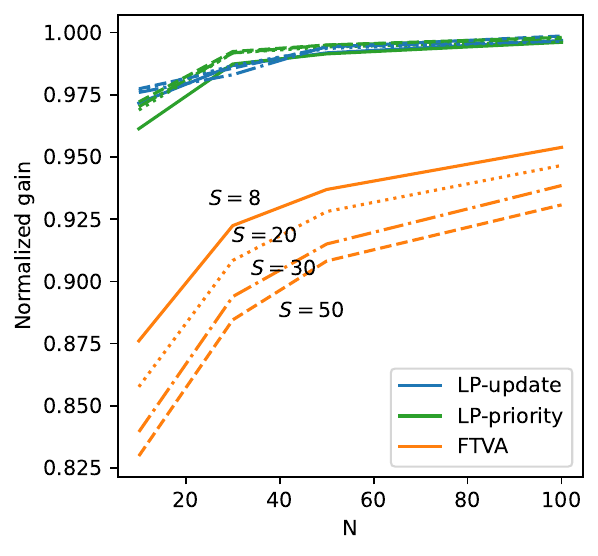}
        &\includegraphics[width=0.33\linewidth]{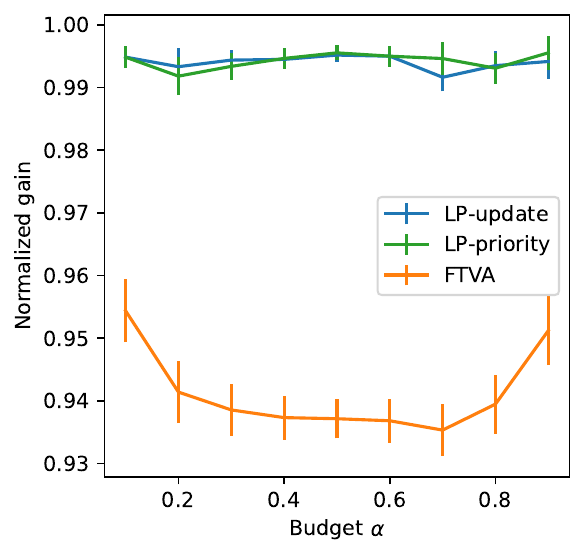}\\
        (a) For various $\tau$.
        &(b) For various $S$.
        &(c) For various $\alpha$ ($N=50$).
    \end{tabular}

    \caption{Comparison of the gains as a function of some parameters.}
    \label{fig:gain_function_parameters}
\end{figure}
\section{Conclusion}
\label{sec:conclusion}
In this paper, we study the problem of constructing an efficient policy for restless multi-armed bandit for the average reward case. We show that under a quite general condition, a simple model-predictive control algorithm provides a solution that achieves both the best known upper bound ($O(1/\sqrt{N})$ or $\exp(-\Omega(N))$ for stable non-degenerate problems), and also works very efficiently in practice. Our paper provides the first analysis of this policy for the average reward criterion. By using a novel framework based on dissipativity we are able to make a subtle connection between the finite and the infinite-horizon problems and we are the first to use it in this context. We believe that this framework is what makes our analysis simple and easily generalizable to other settings. As an example, we discuss in 
Appendix~\ref{apx:generalization}  
the generalization to multi-action multi-constraint bandits. This connection also lends itself to a model-based learning setting. Furthermore, we believe that this framework could be used in the heterogeneous bandit problem. These problems are a matter for future work. 

\bibliography{COLT_2025_conference}
\newpage
\appendix

\section{Additional Literature Review}\label{apx:review}
The study of bandit problems dates back at least to the 1930's in the form of sequential design of experiments, \citep{Thompson33}, \citep{Wald:1947}. However, the more modern perspective of Markovian bandits in terms of MDPs is largely credited to the monograph by \citet{bellman1957dynamic}. This monograph brought to light the combinatorial difficulty of this problem even over a finite horizon. The \emph{rested} variant of the multi-armed bandit problem was famously resolved through \citet{Gittin79} which proposed a remarkably simple structure to the solution by allotting indices to the states of the bandit arms.

In his seminal work \citet{Wh88} generalized the problem to the \emph{restless multi-armed bandit problem} in its modern form and conjectured that under a condition known as indexability, an index policy is asymptotically optimal (in the number of arms) for the RMAB problem. \citet{WW90} showed a counter example to this conjecture, further going on to show in the same work that the conjecture did hold under a \emph{global attractor} (UGAP assumption) and ergodicity condition of a single-arm. This result was generalized to the multi-action setting by \citet{Glazebrook15}. On the other hand, \citet{verloop2016asymptotically, GGY23b} generalized the set of index policies by constructing a set of priority policies known as LP priority index while maintaining the UGAP assumption. More surprisingly, \citet{GGY23, GGY23b} showed these priority policies are in fact exponentially close to being optimal, under three conditions: uniform global attractor property (UGAP), \emph{non-degeneracy} of the fixed point and global exponential stability \cite{GGY23}. The restriction of indexability is removed in \cite{GGY23b} and the condition of UGAP was first removed \cite{HXCW24}.

\citet{GGY23, GGY23b} observed that under the conditions of non-degeneracy, there exists a local neighborhood around the fixed point where the priority policy set is affine. This is the key observation which leads to the exponentially small error bounds under simple local stability conditions. This observation around the fixed point drove several works \citep{HXCW23, HXCW24,hong2024unichain, Avrachenkov2024, yan2024} to shift their perspective towards policies that look to steer the dynamical system towards the optimal fixed point. \citet{HXCW23} created a virtual system that was driven to the fixed point. To transform this into an asymptotically optimal policy, they required the real system to synchronize with the virtual system, hence, they developed the \emph{synchronization assumption}. Unlike \citet{HXCW23} which passively allowed arms to align with each other eventually, \citet{HXCW24} used a \emph{two set policy} to actively align non aligned arms to the fixed point. In contrast to this form of control, \citet{yan2024} designed the \emph{align and steer} policy to steer the mean field dynamical control system towards the fixed point. \citet{yan2024}'s policy is asymptotically optimal under the assumptions of \emph{controllability} of the dynamical system to more directly move the state of the system towards the fixed point over time. In parallel to this work, \citet{Avrachenkov2024} used a similar idea of mean field dynamical control for weakly coupled systems under a \emph{relaxed but generalized constraint set} if the single arm process is unichain and aperiodic.

In contrast to these approaches our algorithm is not explicitly designed at the outset to steer the corresponding dynamical system towards a fixed point. Rather, we show that the model predictive control algorithm is designed to solve an equivalent \emph{rotated cost minimization} problem. In doing so, it produces policies that are close to optimal to the infinite horizon average reward problem. We  thus produce a policy that is easily generalizable to the multi-action, general constraint setting \emph{without relaxing the constraints} using ideas from Whittle's relaxation. Further, when the fixed point is unique, minimizing the rotated cost problem will coincide with steering the dynamical control system towards the fixed point, allowing us to recover exponentially close asymptotic optimality bounds.

From an algorithmic perspective, there are also a lot of recent papers on RMABs for the finite-horizon reward criteria 
\cite{hu2017asymptotically,zayas2019asymptotically,brown2020index,ghosh2022indexability,zhang2021restless,GGY23}, or the infinite-horizon discounted reward \cite{zhang2022near,ghosh2022indexability}. Our results are close in spirit to those since our policy shows how to use a finite-horizon policy to construct a policy that is asymptotically optimal for the infinite-horizon rewards. 

Finally, it is worth noting that a recent result \cite{yan2024optimalgap}, developed in parallel to our own showed an order $\mcl{O}(1/N)$ result for the \emph{degenerate finite time horizon problem} using a diffusion approximation instead of a mean field approximation. Unfortunately, this finite horizon result cannot immediately be compared to the infinite horizon average reward result primarily because unlike the mean field solution which can directly be translated as the solution to \eqref{EQ::VOPTDYN-Tinf} the diffusion dynamic has no immediate corresponding equivalent.

\section{Algorithm details}
\label{apx:algo}

\subsection{Rounding procedure}
\label{apx:rounding}

The rounding procedure is composed of two steps:
\begin{enumerate}
    \item First, given $\bu = \mu_\tau(\bx(\bS(t)))$, we need to construct a vector $\bU^N$ that is \emph{as close as possible} to $\bu$ while satisfying the constraints that $\sum_i u_i \leq \alpha$ and $N\bU^N$ is an integer. 
    \item Second, we use $\bU^N$ to construct a feasible sequence of actions $\bA(t)\in\{0,1\}^N$ that we can apply to $\bS(t)$. 
\end{enumerate}
The second step is quite easy: as $Nx_i$ and $NU^N_i$ are integers for all $i$, we can construct a feasible sequence of action by applying the action $1$ to $NU^N_i$ arms that in state $i$ and action $0$ to the $N(x_i-U^N_i)$ others. The first step is more complicated because we want to construct a $\bU^i$ that is \emph{as close as possible} as $u_i$. Before giving our procedure, let us illustrate it through two examples of $(N\bx,N\bu)$ for $\alpha=0.5$:
\begin{center}
    \begin{tabular}{|c|c||l|r|}
        \hline
         $N\bx$ & $N\bu$  & Difficulty & Possible $\bU^N$ by Algorithm~\ref{algo::rounding}\\\hline
         $(10,10,10,9)$&$(10,9.5,0,0)$& $\|u\|_1 \ge \alpha$& $(10, 9, 0, 0)$ \\\hline
         && We need to randomize& $(10,6,0,0)$ with proba $0.7$\\
         $(10,10, 10, 9)$ &$(10, 5.7, 0.2, 0)$&to be as close as& $(10,5,1,0)$ with proba $0.2$\\
         &&possible to $\bu$&$(10,5,0,0)$ with proba $0.1$\\\hline
         $(10,10, 10, 9)$ &$(10, 4.9, 4.6, 0)$&$\|u\|_1\ge\alpha$ and we& $(10,5,4,0)$ with proba $0.4$\\
         &&need to randomize &$(10,4,5,0)$ with proba $0.6$\\\hline
    \end{tabular}
\end{center}

This leads us to construct Algorithm~\ref{algo::rounding}. This algorithm first construct a $\bv$ that is as close as possible to $\bu$ while satisfying $\|\bv\|\le\alpha$. There might be multiple choices for this first step and the algorithm can choose any.  For instance, applied to the example such that $\bx=(10,10, 10, 9)$ and $\bu=(10, 4.9, 4.6, 0)$, this algorithm would first construct a vector $\bv$ that can be any convex combination of $(10, 4.4, 4.6, 0, 0)$ and $(10, 4.9, 4.1, 0, 0)$. For instance, if the algorithm chose the vector $\bv=(10, 4.4, 4.6, 0)$, then the algorithm would produce $(10,5,4,0)$ with probability $0.4$ and $(10,4,5,0)$ with probability $0.6$.  Once this is done, the algorithm outputs a $\bU^N$ that satisfies the constraints and such that $\E[\bU^N]=\bv$. In some cases, there might be multiple distributions that work. The algorithm may output any. An efficient procedure to implemented this last step is provided in Section 5.2.3 of \cite{ioannidis2016adaptive}. 

\begin{algorithm}[ht]
	\caption{Randomized rounding}
	\label{algo::rounding}
	\begin{algorithmic}
        \REQUIRE Integer $N$, vector $\bx\in\Delta_\sspace$ such that $Nx_i$ is an integer for all $i$, vector $\bu\in\mathcal{U}(\bx)$.
        \STATE Let $\bv$ be (any) vector such that $\bv\le\bu$ and $\|\bv\|_1 = \min\left(\|\bu\|_1,\frac{\lfloor \alpha N\rfloor}{N}\right)$.
        \STATE For all state $i$, let $z_i := Nv_i - \lfloor N v_i \rfloor$ be the fractional part of $Nv_i$. 
        \STATE Sample a sequence of $|\sspace|$ Bernoulli random variables $\bZ=(Z_1\dots \bZ_{|\sspace|})$ such that $\E[Z_i]=z_i$ and $\sum_{i=1}Z_i\le \lceil \sum_{i=1}z_i\rceil$.
        \ENSURE $(\floor{Nv_i} + Z_i)_{i\in\sspace}$.
    \end{algorithmic}
\end{algorithm}

\begin{lemma}\label{apx::lem_round}
    Algorithm~\ref{algo::rounding} outputs a random vector $\bU^N$ such that $\bU^N\in\mathcal{U}(\bx)$ and such that
    \begin{align*}
        \|\E[\bU^N] - \bu \|_1 \le \frac{\floor{\alpha N}-\alpha N}{N}.
    \end{align*}
\end{lemma}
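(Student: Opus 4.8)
The plan is to analyze Algorithm~\ref{algo::rounding} in its two stages and track the $l_1$ distance introduced at each stage. First I would observe that the output vector $\bU^N$ has components $\lfloor Nv_i\rfloor + Z_i$, where each $Z_i\in\{0,1\}$ is a Bernoulli variable with $\E[Z_i]=z_i=Nv_i-\lfloor Nv_i\rfloor$. Hence $\E[\bU^N]_i = \lfloor Nv_i\rfloor + z_i = Nv_i$, so $\E[\bU^N]=N\bv$ when read in unnormalized coordinates; after dividing by $N$ (the empirical distribution is $\bU^N/N$), the rounded policy has expectation exactly $\bv$. I would then verify feasibility, $\bU^N\in\mathcal{U}(\bx)$: since $\bv\le\bu\le\bx$ componentwise and $Z_i\le 1$ with $\lfloor Nv_i\rfloor+Z_i \le Nv_i + (1-z_i)\le Nx_i$ (using that $Nx_i$ is an integer $\ge Nv_i$), each component respects the box constraint; and the budget constraint follows because $\sum_i(\lfloor Nv_i\rfloor + Z_i)\le \sum_i \lfloor Nv_i\rfloor + \lceil\sum_i z_i\rceil = \lceil\sum_i Nv_i\rceil = \lceil N\|\bv\|_1\rceil \le \lfloor\alpha N\rfloor$, where the last step uses $\|\bv\|_1\le \lfloor\alpha N\rfloor/N$ by construction.

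Next I would bound the target distance $\|\E[\bU^N]-\bu\|_1$. Since $\E[\bU^N]/N=\bv$, this equals $\|\bv-\bu\|_1$. By construction $\bv\le\bu$, so $\|\bv-\bu\|_1=\|\bu\|_1-\|\bv\|_1$. Now $\|\bv\|_1=\min(\|\bu\|_1,\lfloor\alpha N\rfloor/N)$, so there are two cases: if $\|\bu\|_1\le\lfloor\alpha N\rfloor/N$ then $\|\bv\|_1=\|\bu\|_1$ and the distance is $0$; otherwise $\|\bv\|_1=\lfloor\alpha N\rfloor/N$ and, using the feasibility constraint $\|\bu\|_1\le\alpha$, we get $\|\bu\|_1-\|\bv\|_1\le \alpha-\lfloor\alpha N\rfloor/N = (\alpha N-\lfloor\alpha N\rfloor)/N$. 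In either case the bound $(\alpha N-\lfloor\alpha N\rfloor)/N$ holds. I note that the statement writes $(\lfloor\alpha N\rfloor-\alpha N)/N$, which is non-positive and should presumably read as the absolute value $(\alpha N-\lfloor\alpha N\rfloor)/N$; I would present the bound in this corrected form and flag the sign convention.

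The genuinely subtle step is the existence of the Bernoulli sampling in line~3: one must produce jointly-distributed $Z_1,\dots,Z_{|\sspace|}$ with prescribed marginals $\E[Z_i]=z_i$ and the \emph{hard} constraint $\sum_i Z_i\le\lceil\sum_i z_i\rceil$ holding almost surely, not merely in expectation. This is the only place where a nontrivial construction is needed, since independent Bernoullis would violate the sum constraint with positive probability. I would invoke the dependent-rounding / pipage-style construction referenced in the paper (Section~5.2.3 of \cite{ioannidis2016adaptive}), which guarantees that such a coupling exists whenever $\sum_i z_i$ is the sum of the fractional parts: the key identity is that $\sum_i z_i=\sum_i(Nv_i-\lfloor Nv_i\rfloor)$ is itself an integer-or-fractional quantity whose ceiling controls the number of $1$'s, so the constraint is consistent with the marginals. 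Everything else is a direct calculation; the main obstacle is simply making this coupling argument precise and confirming it preserves both the box and budget feasibility established above.
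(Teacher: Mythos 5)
Your proof is correct. The paper in fact states Lemma~\ref{apx::lem_round} without giving a proof, so there is nothing to compare against; your direct verification (expectation of the output equals $\bv$, feasibility of the box and budget constraints, then $\|\bv-\bu\|_1=\|\bu\|_1-\|\bv\|_1\le\alpha-\lfloor\alpha N\rfloor/N$) is exactly the argument the authors leave implicit, and your observations that the stated bound has its sign reversed and that the only nontrivial ingredient is the dependent-rounding coupling for the $Z_i$ are both on point. One small tightening: your chain $\lfloor Nv_i\rfloor+Z_i\le Nv_i+(1-z_i)\le Nx_i$ fails in the edge case $Nv_i=Nx_i$ (an integer), where the right-hand side is $Nx_i+1$; but there $z_i=0$ forces $Z_i=0$ almost surely, so the box constraint still holds — it is cleaner to split into the cases $Nv_i$ integer (where $Z_i=0$ a.s.) and $Nv_i$ non-integer (where $\lfloor Nv_i\rfloor\le Nx_i-1$).
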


\subsection{Generalization to multi-constraints MDPs}
\label{apx:generalization}

In this section, we give some clues on how and why our algorithm and results can be generalized to multi-action multi-constraints MDPs. Specifically, we make the following modifications to our model:
\begin{itemize}
    \item Instead of restricting an action to be in $\{0,1\}$, we can consider any finite action set $\mathcal{A} = \mathcal{A} =  \{0\dots A-1\}$.
    \item Instead of having a single constraint $\sum_n A_n(t)\le \alpha N$, we consider that there are $K$ types of constraints and that when taking action $a$ in state $i$, this action consumes $D^a_{i,k}\ge0$ of resource $k$. The action $0$ is special and does not consume resources. We impose $K$ resource constraints:
    \begin{align*}
        \sum_{n=1}^N D^{A_n(t)}_{S_n(t), k} \le C_k && \forall k\in{1\dots K}.
    \end{align*}
\end{itemize}
As these new constraints are linear, one can define LP relaxation equivalence of \eqref{EQ::VOPTDYN-Tinf} and \eqref{EQ::VOPTDYN}, which leads to an LP-update adapted to this new setting. The only major modification of the algorithm concerns the randomized rounding and is where one would need to have an action $0$ in order to guarantee the feasibility of the solution. 

\begin{itemize}
    \item The generalization of Assumption~\ref{AS::SA} to the multi-action case is straightforward. Hence, one can prove an equivalent of Theorem~\ref{thm:asymptotic_optimal} for multi-action multi-constrained MDPs with a rate of convergence of $O(1/\sqrt{N})$. 
    \item To obtain a generalization of Theorem~\ref{thm:expo_bound}, there are two main difficulties
    \begin{enumerate}
        \item The first is to redefine the notion of a non-degenerate fixed-point in order to replace Assumption~\ref{AS::non-degenerate} by one adapted to the multi-action multi-constraint case. To that end, we believe that the most appropriate notion is the one presented in \cite{gast2024reoptimization}. It provides the notion of non-degeneracy that uses a linear map.
        \item The second is to provide an equivalent of the stability Assumption~\ref{AS::stable}. To do so, we can again use the notion of non-degeneracy defined in \cite{gast2024reoptimization} that defines a linear map. The stability of this linear map should suffice to prove the theorem.
    \end{enumerate}
\end{itemize}

\section{Proof of Theorem~\ref{thm:asymptotic_optimal}}\label{apx:PFcomp}
\subsection{Part 1, Properties of the dynamical control problem \ref{EQ::VOPTDYN}:}
We begin by formally defining the space of stationary policies $\pspacedyn$ as the set of all policies that map $\bx \in \Dels$ to an action $\bu \in \mcl{U}(\bx)$. Note, any policy in $\pspace$ must satisfy this condition. Now, for an initial distribution $\bx \in \Dels$ define the discounted infinite horizon reward problem,
\begin{align}\label{EQ::VDISC}
    \Vdetdisc{\bx}{} :=& \max_{\pi \in \pspacedyn}\sum_{t = 0}^{\infty} \beta^{t}R(\bx(t), \bu(t))\\
    \text{Subject to} & \text{ the dynamics \eqref{EQ:MeanField}}
\end{align}
where $x(t)$ denotes the trajectory induced by the policy $\pi$. Note, this limit is always well defined for $\beta < 1$ and the optimal policy exists \citep{puterman2014markov}. The main idea of this section will be an exercise in taking appropriate limit sequences of the discount factor $\beta$ to define the gain and bias for the constrained average reward problem.

Before we proceed we introduce a little notation for convenience in writing down our proof. For any pair $(\bx, \bu)$  define an equivalent $\{\y{s}{a}\}_{s, a}$ with:
\begin{align}\label{EQ::StateAction}
    \y{s}{1} &:= \bu_{s}\\
    \y{s}{0} &:= \bx_{s} - \bu_{s}
\end{align}
Here $\y{s}{a}(t)$ represents the fraction of arms in state $s$, taking action $a$ at time $t$. It is also convenient to introduce a concatenated reward vector $\Rvec{} := [\Rvec{0}, \Rvec{1}]$ and a concatenated transition kernel $\Pnext := [\Pmat{0}, \Pmat{1}]$. If $\yb := \{\y{s}{a}\}$ represents the $\sspace \times 2$ vector, it is not hard to check that $R(\bx, \bu) := \Rvec{0}\cdot \bx + (\Rvec{1} - \Rvec{0}) \cdot\bu = \Rvec{} \cdot \yb$ and $\Phi(\bx, \bu) := \Pmat{0} \cdot \bx + (\Pmat{1} - \Pmat{0}) \cdot \bu = \Pnext \cdot \yb$. We can rewrite the discounted problem as follows:
\begin{align}
    \Vdetdisc{\bx}{} :=& \max_{\pi \in \pspacedyn}\sum_{t = 0}^{\infty} \beta^{t} \Rvec{} \cdot \yb (t) \label{EQ:Vdis}\\
    \y{s}{0}(t) +  \y{s}{1}(t) =& \bx_{s}(t) \label{EQ:DYN1}\\
   \sum_{a} \y{s}{a}(t + 1) =& \sum_{s', a'} P^{a'}_{s', s} \y{s'}{a'}(t) \hspace{0.2 in} \forall \hspace{0.05 in} t \label{EQ:DYN2}\\
   \text{s.t.}\hspace{0.4 in} & \nonumber\\
   \sum_{s} \y{s}{1}(t) \leq& \alpha\hspace{0.2 in} \forall \hspace{0.05 in} t \label{EQ:DYN3}
\end{align}
We now state the following lemma on the continuity of the value function in the state:  
\begin{lemma}\label{LEM::LLIP}
    Under assumption \ref{AS::SA}, for any $\beta \in (0, 1]$ we have,
    \begin{equation}
        \Vdetdisc{\bx}{} - \Vdetdisc{\bx'}{} \leq \frac{k}{\syncconst_k}\|\bx - \bx'\|_1
    \end{equation}
\end{lemma}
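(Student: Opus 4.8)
The plan is to prove the bound by a coupling / optimal-transport argument between the two controlled mean-field systems: I will take an optimal control for the $\bx$-system and transfer it to a \emph{feasible} control for the $\bx'$-system whose discounted reward is not much smaller, the loss being governed by $\syncconst_k$.

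First I would fix $\beta\in(0,1]$ and let $\{(\bx(t),\bu(t))\}_{t\ge0}$, with $\bx(0)=\bx$, be an optimal trajectory for $\Vdetdisc{\bx}{}$. I decompose the initial conditions as $\bx=\bm{w}+\bm{p}$ and $\bx'=\bm{w}+\bm{q}$ with $w_s=\min(x_s,x'_s)$, so that $\bm{p},\bm{q}\ge0$ have disjoint support and $\|\bm{p}\|_1=\|\bm{q}\|_1=\delta:=\tfrac12\|\bx-\bx'\|_1$. Viewing each system as a continuum of arms, I introduce a coupling that matches the common mass $\bm{w}$ (same state in both systems) and leaves $\delta$ units of \emph{discrepant} mass in each system. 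The $\bx'$-control is then defined by copying, on each matched arm, the action that $\bu$ applies to its $\bx$-counterpart, and by applying action $0$ to every discrepant arm of the $\bx'$-system.

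Next I would verify feasibility and bound the per-step reward gap. Since the matched pulls form a subset of the $\bx$-system's pulls and action $0$ consumes no budget, the constructed control satisfies $0\le\bu'(t)\le\bx'(t)$ and $\|\bu'(t)\|_1\le\|\bu(t)\|_1\le\alpha$, so it is feasible; because the problem is deterministic, its discounted reward lower-bounds $\Vdetdisc{\bx'}{}$. Matched arms contribute identically to both instantaneous rewards, hence writing $R=\Rvec{}\cdot\yb$ gives $|R(\bx(t),\bu(t))-R(\bx'(t),\bu'(t))|\le\|\Rvec{}\|_\infty\|\yb(t)-\yb'(t)\|_1\le 2\delta_t$, where $\delta_t$ denotes the discrepant mass at time $t$ and $\|\Rvec{}\|_\infty\le1$. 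Processing time in blocks of length $k$ and holding the discrepant $\bx'$-arms at action $0$ throughout each block, the maximal $k$-step coupling coalesces a fraction at least $\syncconst_k$ of the discrepant mass onto common states; this is exactly the content of the definition of $\syncconst_k$, whose minimum over all action sequences $\ba$ dominates whatever actions the optimal $\bx$-policy applies to the discrepant arms. Consequently $\delta_t\le(1-\syncconst_k)^{\lfloor t/k\rfloor}\delta$.

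Finally, summing the geometric series yields
\[
\Vdetdisc{\bx}{}-\Vdetdisc{\bx'}{}\le\sum_{t=0}^{\infty}\beta^t\,2\delta_t\le 2\delta\sum_{t=0}^{\infty}(1-\syncconst_k)^{\lfloor t/k\rfloor}=2\delta\cdot\frac{k}{\syncconst_k}=\frac{k}{\syncconst_k}\|\bx-\bx'\|_1,
\]
using $\beta^t\le1$; note that the difference of the two reward streams converges even at $\beta=1$, so the bound is uniform in $\beta$. The step I expect to be the main obstacle is making the $k$-step coupling rigorous \emph{inside} the deterministic mean-field dynamics: one must check that the mass transport realizing the coupling produces, at every step, a genuine feasible control $\bu'(t)$ (respecting both $\bu'\le\bx'$ and the budget), and that the block bookkeeping matches the $k$-step definition of $\syncconst_k$ rather than an iterated one-step coefficient. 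The remaining pieces — the telescoping summation and the reward-difference estimate — are routine.
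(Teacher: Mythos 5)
Your strategy is genuinely different from the paper's. The paper discretizes $\Dels$ into $M$-arm configurations, proves a one-step Bellman/$Q$-function comparison for two configurations differing in a single arm (playing the optimal action on one and action $0$ on the discrepant arm of the other), closes a fixed-point inequality $\sigma\le\frac1M+(1-\syncconst_1)\sigma$ on the supremum $\sigma$ of value differences, and finishes by summing along a path of single-arm swaps plus a density argument. You instead run a direct infinite-horizon mass coupling with geometric decay of the discrepant mass. Your feasibility check for the transferred control, the per-step reward estimate $|R(\bx(t),\bu(t))-R(\bx'(t),\bu'(t))|\le 2\delta_t$, and the geometric-series arithmetic yielding the constant $k/\syncconst_k$ are all correct as far as they go.

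However, the step you flag as the ``main obstacle'' is a genuine gap, not just bookkeeping. The coefficient $\syncconst_k$ in \eqref{eq:ergodic_coef} is a minimum over \emph{open-loop} action sequences $\ba\in\{0,1\}^k$: it compares the fixed matrix product $P^{a_1\dots a_k}_{s,\cdot}$ with $P^{0\dots 0}_{s',\cdot}$. Under the optimal mean-field control, the discrepant mass of the $\bx$-system does not receive a fixed sequence: at each step the control pulls a state-dependent \emph{fraction} of the mass in each state, so after $k$ steps a unit of discrepant mass starting at $s$ is distributed according to a closed-loop kernel built from state-dependent mixtures $\theta_j P^1_{j,\cdot}+(1-\theta_j)P^0_{j,\cdot}$ at each intermediate state $j$. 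Such a kernel is in general \emph{not} a convex combination of the open-loop kernels $\{P^{\ba}_{s,\cdot}\}_{\ba\in\{0,1\}^k}$ (the required mixing weights would have to depend on the intermediate states), so the concavity of the overlap functional $\mu\mapsto\sum_{s^*}\min\{\mu(s^*),\nu(s^*)\}$ does not let you conclude that the overlap is at least $\syncconst_k$; your claim that the minimum over $\ba$ ``dominates whatever actions the optimal policy applies'' is exactly what needs proof. For $k=1$ your argument does close, since a single step applies a genuine convex combination and concavity gives overlap at least $\syncconst_1$; for $k>1$ --- precisely the case Assumption~\ref{AS::SA} is designed to cover --- the decay $\delta_t\le(1-\syncconst_k)^{\lfloor t/k\rfloor}\delta$ is unjustified. (In fairness, the paper writes out only the $k=1$ case and leaves $k>1$ as an exercise, where the same closed-loop-versus-open-loop issue lurks.) To repair your argument you would need either a variant of $\syncconst_k$ defined over history-dependent randomized action sequences, or a restructuring in the spirit of the paper's proof, whose fixed-point inequality on $\sigma$ only ever requires a one-step comparison before re-invoking the quantity being bounded.
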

Note, this continuity result is independent of the discount factor $\beta$. We postpone the proof to Appendix \ref{apx::section_Lip}. This will be critical to proving the main result of the subsection below.
\begin{lemma}\label{LEM::EXGH}
    Consider the infinite horizon average reward problem under the synchronization assumption. There exists a constant gain and a bias function from $\Dels \to \R$ denoted by $g^{\star}$ and $h^{\star}(\cdot)$ respectively defined by:
    \begin{equation*}
        g^{\star} := \lim_{i \to \infty} (1 - \beta_i) \Vdetdisc{\bx}{\beta_i}
    \end{equation*}
    for an appropriate sequence $\beta_i \to 1$ as $i \to \infty$. For the same sequence of $\beta_i$ we can define the Lipschitz continuous bias function,
    \begin{equation*}
        h^{\star}(\mB) := \lim_{i \to \infty} \sum_{t = 0}^{\infty} \beta_i^{t} R(\bx(t), \bu(t)) - (1 - \beta_i)^{-1}g^{\star}
    \end{equation*}
     Furthermore, they satisfy the following fixed point equations,
    \begin{equation}\label{eq::disc1}
        g^{\star} + h^{\star}(\mB) = \max_{\bu \in \mcl{U}(\bx)} R(\bx(t), \bu(t)) +   h^{\star}(\Phi(\bx, \bu))
    \end{equation}
\end{lemma}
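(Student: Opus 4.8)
The plan is to follow the classical \emph{vanishing-discount} argument, with the uniform-in-$\beta$ Lipschitz estimate of Lemma~\ref{LEM::LLIP} supplying the compactness that makes everything go through. Fix an arbitrary reference point $\bz\in\Dels$ and define the \emph{relative} discounted value $h_\beta(\bx):=\Vdetdisc{\bx}{}-\Vdetdisc{\bz}{}$. Lemma~\ref{LEM::LLIP}, applied in both directions, shows that the family $\{h_\beta\}_{\beta\in(0,1)}$ is uniformly Lipschitz with constant $k/\syncconst_k$; since $h_\beta(\bz)=0$ and $\Dels$ has $\ell_1$-diameter at most $2$, this family is also uniformly bounded by $2k/\syncconst_k$. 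Because the rewards lie in $[0,1]$ we have $0\le\Vdetdisc{\bx}{}\le(1-\beta)^{-1}$, so $(1-\beta)\Vdetdisc{\bz}{}\in[0,1]$ is bounded as well.

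First I would extract the gain. As $(1-\beta)\Vdetdisc{\bz}{}$ is bounded, choose a sequence $\beta_i\to1$ along which it converges to some $g^\star\in[0,1]$. Writing $(1-\beta)\Vdetdisc{\bx}{}=(1-\beta)\Vdetdisc{\bz}{}+(1-\beta)h_\beta(\bx)$ and using that $h_\beta$ is uniformly bounded, the last term vanishes; hence $\lim_i(1-\beta_i)\Vdetdisc{\bx}{\beta_i}=g^\star$ for \emph{every} $\bx$, so the gain is a constant independent of the initial state. Next, by the Arzel\`a--Ascoli theorem applied to the uniformly bounded and uniformly Lipschitz family $\{h_{\beta_i}\}$ on the compact set $\Dels$, I pass to a further subsequence (still denoted $\beta_i$) along which $h_{\beta_i}\to h_{\mathrm{rel}}$ uniformly, with $h_{\mathrm{rel}}$ Lipschitz of constant $k/\syncconst_k$.

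The heart of the proof is to pass to the limit in the discounted Bellman optimality equation for problem \eqref{EQ:Vdis},
\[
    \Vdetdisc{\bx}{}=\max_{\bu\in\mcl{U}(\bx)}\bigl(R(\bx,\bu)+\beta\,\Vdetdisc{\Phi(\bx,\bu)}{}\bigr).
\]
Subtracting $\Vdetdisc{\bz}{}$ and regrouping yields the equivalent form $h_\beta(\bx)+(1-\beta)\Vdetdisc{\bz}{}=\max_{\bu\in\mcl{U}(\bx)}\bigl(R(\bx,\bu)+\beta\,h_\beta(\Phi(\bx,\bu))\bigr)$. Along $\beta_i$ the left-hand side tends to $h_{\mathrm{rel}}(\bx)+g^\star$. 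On the right-hand side, $R$ and $\Phi$ are linear (hence continuous) in $\bu$, the feasible set $\mcl{U}(\bx)$ is a \emph{fixed} compact polytope, and $\beta_i h_{\beta_i}\to h_{\mathrm{rel}}$ uniformly on $\Dels$ (since $\|\beta_i h_{\beta_i}-h_{\mathrm{rel}}\|_\infty\le(1-\beta_i)\|h_{\beta_i}\|_\infty+\|h_{\beta_i}-h_{\mathrm{rel}}\|_\infty\to0$). Uniform convergence of continuous objectives over a fixed compact feasible set preserves the maximum, so the right-hand side converges to $\max_{\bu\in\mcl{U}(\bx)}\bigl(R(\bx,\bu)+h_{\mathrm{rel}}(\Phi(\bx,\bu))\bigr)$, giving \eqref{eq::disc1} with $h_{\mathrm{rel}}$ in place of $h^\star$.

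It remains to reconcile $h_{\mathrm{rel}}$ with the normalization in the statement, $h^\star(\bx)=\lim_i\bigl(\Vdetdisc{\bx}{\beta_i}-(1-\beta_i)^{-1}g^\star\bigr)$. Writing $\Vdetdisc{\bx}{\beta_i}-(1-\beta_i)^{-1}g^\star=h_{\beta_i}(\bx)+c_i$ with $c_i:=\Vdetdisc{\bz}{\beta_i}-(1-\beta_i)^{-1}g^\star$, it suffices to show $c_i$ is bounded, for then a further subsequence makes it converge to a constant $c$ and $h^\star=h_{\mathrm{rel}}+c$ is Lipschitz and, by the additive-constant invariance of \eqref{eq::disc1}, solves the asserted fixed-point equation. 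Boundedness follows from a contraction estimate: the candidate $\tilde V_\beta(\bx):=(1-\beta)^{-1}g^\star+h_{\mathrm{rel}}(\bx)$ satisfies the discounted Bellman equation up to a residual of size $(1-\beta)\|h_{\mathrm{rel}}\|_\infty$ (using \eqref{eq::disc1}), so the $\beta$-contraction property of the Bellman operator gives $\|\Vdetdisc{\cdot}{}-\tilde V_\beta\|_\infty\le\|h_{\mathrm{rel}}\|_\infty$, which bounds $c_i$. The main obstacle is conceptual rather than computational: the entire argument hinges on the \emph{uniform-in-$\beta$} equicontinuity of Lemma~\ref{LEM::LLIP}, without which Arzel\`a--Ascoli fails and neither the bias nor the interchange of limit and maximum can be justified; the only delicate bookkeeping is the convergence of the normalization constant $c_i$, handled by the contraction estimate above.
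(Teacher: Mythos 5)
Your proof is correct, and while it is the same vanishing-discount strategy the paper uses (both hinge on the uniform-in-$\beta$ Lipschitz bound of Lemma~\ref{LEM::LLIP}), the technical execution differs in two substantive ways. The paper anchors the bias at a fixed point $\bx_\beta$ of the optimal discounted dynamics, whose existence it gets from Kakutani's fixed-point theorem; this makes $g_\beta:=(1-\beta)\Vdetdisc{\bx_\beta}{}$ an exact gain and reduces the convergence $(1-\beta_i)\Vdetdisc{\bx}{\beta_i}\to g^\star$ to a citation of \citet{puterman2014markov}. You instead anchor at an arbitrary reference state $\bz$, extract $g^\star$ by elementary compactness of $[0,1]$, obtain the limit bias $h_{\mathrm{rel}}$ via Arzel\`a--Ascoli, and then control the normalization constant $c_i=\Vdetdisc{\bz}{\beta_i}-(1-\beta_i)^{-1}g^\star$ through a $\beta$-contraction perturbation estimate. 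Your route is more self-contained (no Kakutani, no external citation for the Abelian limit) and it handles more carefully a point the paper glosses over: the paper's final display silently replaces $(1-\beta_i)^{-1}g_{\beta_i}$ by $(1-\beta_i)^{-1}g^\star$, which requires $(1-\beta_i)^{-1}(g_{\beta_i}-g^\star)\to 0$, whereas your bounded-$c_i$ argument addresses exactly this normalization. What the paper's approach buys, and what your proof omits, is the identification of $g^\star$ with $\lim_{\tau\to\infty}\VoptInf{\tau}{\bx}/\tau$ and with the optimal value of the relaxed LP \eqref{EQ::VOPTDYN-Tinf}; the lemma as stated does not demand this, but the identification is used downstream (e.g.\ in Lemma~\ref{LEM::DISS}), so if you wanted your argument to slot into the rest of the paper you would need to add a short Tauberian/Ces\`aro step relating your subsequential $g^\star$ to the average-reward value.
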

\begin{proof}
Now note, $\yb \cdot \one = 1$, where $\one$ is the all $1'$s vector. We will overload the notation slightly by letting $\yb(t)$ be the optimal trajectory taken by the optimal policy $\pi^{*}_{\beta}$ for \ref{EQ::VDISC}. Hence, for any constant $g$ we have:
\begin{equation}\label{eq::disc2}
    \Vdetdisc{\bx}{} = (1 - \beta)^{-1} g + \sum_{t = 0}^{\infty} \beta^{t} [\Rvec{} - g \one] \cdot \yb (t)
\end{equation}
One can check that $\Vdetdisc{\bx}{}$ satisfies the following \emph{Bellman equation},
\begin{equation}\label{eq::disc3}
   \Vdetdisc{\bx}{} = \Rvec{}\cdot \yb(t) + \beta \Vdetdisc{\Pnext \cdot \yb}{}
\end{equation}
Now note, given $\yb(t)$ at time $t$, $\bx(t + 1)$ is given by a linear (hence, continuous) map $\bx(t + 1) := \Pnext \cdot \yb(t)$. Further note, given $\pi^{*}_{\beta}$, $\yb(t)$ is upper hemicontinuous with respect to $\bx(t)$. Let $\mH_{\beta}$ denote the map induced by $\pi^*_{\beta}$ from  $\bx$ to $\yb$, then $\mH_{\beta} (\bx(t)) = \yb(t)$. Combining the two maps, we have $\Pnext \mH_{\beta} : \Dels \to \Dels$. Since, $\Dels$ is closed and bounded, by Kakutani's fixed point theorem there exists atleast one fixed point $\bx_{\beta} := \Pnext \mH_{\beta} \cdot \bx_{\beta}$ if $\yb_{\beta}$ is the corresponding value for $\bx_{\beta}$ we have $\bx_{\beta} := \Pnext \cdot \yb_{\beta}$. Combining these observations allows us to write,
\begin{equation*}
    \Vdetdisc{\bx_{\beta}}{} = \Rvec{}\cdot \yb_{\beta} + \beta \Vdetdisc{\bx_{\beta}}{}
\end{equation*}
We will set $g_{\beta} := \Rvec{}\cdot \yb_{\beta}$ and designate this value as the \emph{gain} of our problem. It then follows that $\Vdetdisc{\bx_{\beta}}{} = (1 - \beta)^{-1}g_{\beta}$. Now, we can define a \emph{bias function} $h_{\beta}(\cdot)$ as the remaining terms of the equation \eqref{eq::disc2}, concretely,
\begin{equation}\label{eq::disch}
    h_{\beta}(\bx) = \sum_{t = 0}^{\infty} \beta^{t} [\Rvec{} - g_{\beta} \one] \cdot \yb (t)
\end{equation}
and this definition of $h_{\beta}(\cdot)$ is well defined since the sum is bounded. It now follows that,
\begin{equation}\label{eq::discvalue}
    \Vdetdisc{\bx}{} = (1 - \beta)^{-1} g_{\beta} + h_{\beta}(\bx)
\end{equation}
More importantly, by plugging these definitions into \eqref{eq::disc3} and making the dependence of $\yb$ on the maximal policy explicit, we obtain the following recursion equation: 
\begin{equation}\label{eq::discfp}
    g_{\beta} + h_{\beta}(\bx) = \max_{\yb: \{\yb_{1} \in \mcl{U}(\bx)\}} g_{\beta} + \beta h_{\beta}(\Pnext \cdot \yb)
\end{equation}
Further, due to the Lipschitz continuity, Lemma \ref{LEM::LLIP} of $\Vdetdisc{\bx}{}$ with respect to $\bx$ we have, for any discount factor $\beta_i < 1$,
\begin{align*}
   (1 - \beta_i)\|\Vdetdisc{\bx}{\beta_i}  - \Vdetdisc{\bx'}{\beta_i}\|_1 \leq \frac{k (1 - \beta_i)}{\syncconst_k}\|\bx - \bx'\|_1 
\end{align*}
. Hence, by choosing a sequence $\beta_i \uparrow 1$ as $i \to \infty$ we obtain:
\begin{equation*}
    \lim_{i \to \infty}(1 - \beta_i)\|\Vdetdisc{\bx}{\beta_i}  - \Vdetdisc{\bx'}{\beta_i}\|_1 \to 0
\end{equation*}
. From \citet{puterman2014markov} we know there exists a sequence $i \to \infty$ such that $(1 - \beta_i)\Vdetdisc{\bx}{\beta_i} \to \lim_{\THor \uparrow \infty}\frac{\VoptInf{\THor}{\bx}}{\THor}$. In particular, this implies that $(1 - \beta_i)\Vdetdisc{\bx}{\beta_i}$ converges to a constant value which we shall denote by $g^{\star}$. This gives us the first result. 
Critically, by noting that $g_{\beta_i} := (1 - \beta_i)\Vdetdisc{\bx_{\beta_i}}{\beta_i}$ we see that $g^{\star}$ must be the limit point of $R(\bx_{\beta_i}, \bu_{\beta_i}) =: \Rvec{} \cdot \yb_{\beta_i} =: g_{\beta_i}$, hence, it is the solution to \eqref{EQ::VOPTDYN-Tinf}. 

Next, once again leveraging Lemma \ref{LEM::LLIP} we have, for any $\beta < 1$,
\[
\|\Vdetdisc{\bx}{}  - (1 - \beta)^{-1}g_{\beta}\|_1 :=\|\Vdetdisc{\bx}{}  - \Vdetdisc{\bx_{\beta}}{}\|_1 \leq \frac{k}{\syncconst_k}\|\bx - \bx_{\beta}\|_1
\]
Plugging these results into \eqref{eq::discvalue} we see that,
\begin{equation}\label{eq::disc_lip_h}
    \|h_{\beta}(\bx)\|_1 \leq \frac{k}{\syncconst_k}\|\bx - \bx_{\beta}\|_1
\end{equation}
is bounded by a constant for all $\beta < 1$. By the same token $h_{\beta}$ is Lipschitz with constant $\frac{k}{\syncconst_k}$. Choosing the same sequence $\beta_i$ in \eqref{eq::disch} and passing to the limit we obtain :
\begin{equation*}
    h^{\star}(\bx) := \lim_{i \to \infty}h_{\beta_i}(\bx) = \lim_{i \to \infty}\sum_{t = 0}^{\infty} \beta_{i}^{t} [\Rvec{} - g_{\beta_i} \one] \cdot \yb (t) = \lim_{i \to \infty}\sum_{t = 0}^{\infty} \beta_{i}^{t} \Rvec{}\cdot \yb(t) - (1 - \beta_i)^{-1}g^{\star}
\end{equation*}
 Thus, we have defined both the gain and bias $g^{\star}$ and $h^{\star}(\cdot)$ for our \emph{deterministic average reward problem}. Recall, $R(\bx, \bu) := \Rvec{0} \cdot \bx + (\Rvec{1} - \Rvec{0}) \cdot \bu = \Rvec{} \cdot \yb$ giving us the second result: 
 \begin{equation}
         h^{\star}(\bx) = \lim_{i \to \infty}\sum_{t = 0}^{\infty} \beta_{i}^{t} R(\bx(t), \bu(t)) - (1 - \beta_i)^{-1}g^{\star}
 \end{equation}
 Further, this sequence $\beta_i \uparrow 1$ in \eqref{eq::discfp} yields the following fixed point equation:
\[
 g^{\star} + h^{\star}(\mB) = \max_{\bu \in \mcl{U}(\bx)} R(\bx(t), \bu(t)) +   h^{\star}(\Phi(\bx(t), \bu(t)))
\]
which completes the proof.
\end{proof}

\subsection{Part~2: Dissipativity}
\label{apx:dissipativity}

\begin{lemma}
    \label{LEM::DISS}   
    Let $\rotcost{l}{\bx}{\bu} = g^* - R(\bx,\bu) + \lambda \cdot \bx - \lambda \cdot \Phi(\bx,\bu)$. Then:
    \begin{itemize}
        \item $\rotcost{l}{\bx}{\bu}\ge0$ for all $\bx\in\Delta_{\sspace}$ and $\bu\in\mathcal{U}(\bx)$.
        \item If $(\bx^*,\bu^*)$ is a solution to \eqref{EQ::VOPTDYN-Tinf}, then $\rotcost{l}{\bx^*}{\bu^*}=0$.
    \end{itemize}
    This shows that our problem is dissipative.
\end{lemma}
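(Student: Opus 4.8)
The plan is to recognize the rotated cost as $g^\star$ minus the partial Lagrangian of the relaxed linear program \eqref{EQ::VOPTDYN-Tinf}, and then read off both claims from strong LP duality. Recall from Lemma~\ref{LEM::EXGH} that $g^\star$ equals the optimal value $\Vopt{\infty}$ of \eqref{EQ::VOPTDYN-Tinf}, and that $\lambda$ is by definition the optimal dual multiplier associated with the balance constraint \eqref{EQ::MF2-inf}, i.e.\ $\bx = \Phi(\bx,\bu)$. I would introduce the partial Lagrangian in which \emph{only} this equality constraint is dualized, keeping $\bx\in\Dels$ and $\bu\in\mcl{U}(\bx)$ as hard constraints:
\[
    J(\bx,\bu) := R(\bx,\bu) + \lambda\cdot\bigl(\Phi(\bx,\bu) - \bx\bigr).
\]
A direct rearrangement gives the identity $\rotcost{l}{\bx}{\bu} = g^\star - J(\bx,\bu)$, so the entire lemma reduces to controlling $J$ over the feasible polytope.

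For the first (nonnegativity) claim, I would invoke strong duality for \eqref{EQ::VOPTDYN-Tinf}. Because the balance constraint holds with equality on the primal-feasible set, $J(\bx,\bu)=R(\bx,\bu)$ at every feasible point, so the dual function $D(\lambda') := \max_{\bx\in\Dels,\,\bu\in\mcl{U}(\bx)} \bigl[R(\bx,\bu) + \lambda'\cdot(\Phi(\bx,\bu)-\bx)\bigr]$ satisfies $D(\lambda')\ge\Vopt{\infty}=g^\star$ for every $\lambda'$ by weak duality, while strong duality for a feasible bounded LP guarantees this bound is attained precisely at the optimal multiplier $\lambda$, i.e.\ $D(\lambda)=g^\star$. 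Consequently, for every $\bx\in\Dels$ and $\bu\in\mcl{U}(\bx)$ we have $J(\bx,\bu)\le D(\lambda)=g^\star$, which is exactly $\rotcost{l}{\bx}{\bu}=g^\star-J(\bx,\bu)\ge 0$. Equivalently, the inequality $J(\bx,\bu)\le g^\star$ is the primal half of the saddle-point characterization of optimality for \eqref{EQ::VOPTDYN-Tinf}.

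For the second claim, at an optimal solution $(\bx^*,\bu^*)$ the balance constraint is satisfied, so $\Phi(\bx^*,\bu^*)-\bx^*=0$ and the multiplier term in $J$ vanishes; moreover $R(\bx^*,\bu^*)=\Vopt{\infty}=g^\star$ by optimality. Hence $J(\bx^*,\bu^*)=g^\star$ and $\rotcost{l}{\bx^*}{\bu^*}=g^\star-g^\star=0$, so the rotated cost is operated optimally at the fixed point. The main obstacle—and the only nontrivial input—is the identity $D(\lambda)=g^\star$, which is where I rely on strong LP duality together with the earlier identification $g^\star=\Vopt{\infty}$ from Lemma~\ref{LEM::EXGH}; once $\lambda$ is pinned down as the optimal dual variable of \eqref{EQ::MF2-inf}, the sign convention is forced so that $J\le g^\star$ rather than $J\ge g^\star$, and everything else is routine substitution.
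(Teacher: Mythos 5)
Your proposal is correct and follows essentially the same route as the paper: both identify the rotated cost as $g^\star$ minus the partial Lagrangian obtained by dualizing only the balance constraint \eqref{EQ::MF2-inf}, and both invoke strong LP duality at the optimal multiplier $\lambda$ to conclude that this Lagrangian is maximized at value $g^\star$, giving nonnegativity everywhere and equality at $(\bx^*,\bu^*)$. Your treatment of the second bullet (the multiplier term vanishing because the balance constraint holds at the optimum) is slightly more explicit than the paper's, but the argument is the same.
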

\begin{proof}
    Recall that $\lambda$ is the optimal dual variable of the constraint $\mB = \Pmat{0} \cdot \mB +  (\Pmat{1} - \Pmat{0}) \cdot \bu$ of problem \eqref{EQ::VOPTDYN-Tinf}. By strong duality and Lemma ~\ref{LEM::EXGH}, this implies that
    \begin{align*}
       g^* &= \max_{\bx, \bu}~ R(\bx, \bu) + \lambda \cdot (\Pmat{0} \cdot \bx +  (\Pmat{1} - \Pmat{0}) \cdot \bu)-\lambda \cdot \bx 
        &\text{Subject to: }\quad
        \bu \in \mcl{U}(\mB).
    \end{align*}
    Recall that $\Phi(\bx,\bu)=\Pmat{0} \cdot \bx +  (\Pmat{1} - \Pmat{0}) \cdot \bu$. This implies that
    \begin{align*}
       0 &= \min_{\bx, \bu}~ \rotcost{l}{\bx}{\bu}
        &\text{Subject to: }
        \bu \in \mcl{U}(\bx),
    \end{align*}
    which implies the results of the lemma.
\end{proof}
    
\subsection{Part~3: MPC is optimal for the deterministic problem}
\label{apx:MPC}

Recall that $\Costmin{\tau}{\mB} := \min \sum_{t = 0}^{\tau - 1} \rotcost{l}{\bx(t)}{\bu(t)}$ with $\bx(0) = \mB$.
\begin{lemma}
    \label{lem:C_vs_W}
    Given any initial distribution $\bx \in\Delta_{\sspace}$, the cost minimization problem $\Costmin{\tau}{\bx}$ and the reward maximization problem $\VoptInf{\tau}{\bx}$ are equivalent. Next, for all $\bx$, the limit $\lim_{t \to \infty}\Costmin{t}{\bx}$ exists and is bounded. Further, for any $\epsilon > 0$, there exists $\tau(\epsilon) < \infty$ such that: 
    \[
    |\lim_{t \to \infty}\Costmin{t}{\bx} - \Costmin{\tau(\epsilon)}{\bx}| < \epsilon.
    \]
    Finally, the limit so defined is Lipschitz with Lipschitz constant $C_{L}$ bounded above by $\|\lambda\|_{\infty} + \frac{k}{\syncconst_k}$.  
\end{lemma}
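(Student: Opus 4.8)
The plan is to reduce the entire lemma to one algebraic identity linking \eqref{EQ::COSTMINPROB} and \eqref{EQ::VOPTDYN}, after which every claim follows from the properties of $h^\star$ established in Lemma~\ref{LEM::EXGH} and the non-negativity of the rotated cost from Lemma~\ref{LEM::DISS}. \textbf{Equivalence.} First I would expand $\rotcost{l}{\bx}{\bu}=g^\star-R(\bx,\bu)+\lambda\cdot\bx-\lambda\cdot\Phi(\bx,\bu)$ along a feasible trajectory. Since \eqref{EQ::MF2} forces $\Phi(\bx(t),\bu(t))=\bx(t+1)$, the storage terms telescope, $\sum_{t=0}^{\tau-1}(\lambda\cdot\bx(t)-\lambda\cdot\Phi(\bx(t),\bu(t)))=\lambda\cdot\bx-\lambda\cdot\bx(\tau)$, so that along every feasible trajectory starting at $\bx$,
\[
\sum_{t=0}^{\tau-1}\rotcost{l}{\bx(t)}{\bu(t)}=\tau g^\star+\lambda\cdot\bx-\Big(\sum_{t=0}^{\tau-1}R(\bx(t),\bu(t))+\lambda\cdot\bx(\tau)\Big).
\]
As $\tau g^\star+\lambda\cdot\bx$ does not depend on the controls, minimizing the left-hand side is exactly maximizing the bracketed objective of \eqref{EQ::VOPTDYN}; hence $\Costmin{\tau}{\bx}=\tau g^\star+\lambda\cdot\bx-\VoptInf{\tau}{\bx}$, and the two problems share the same optimizers.

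\textbf{Existence and boundedness of the limit.} Combining this identity with the bias definition \eqref{EQ::Defbias}, $h^\star(\bx)=\lim_{\tau\to\infty}(\VoptInf{\tau}{\bx}-\tau g^\star)$, and letting $\tau\to\infty$ gives $\Costmin{\infty}{\bx}:=\lim_{\tau\to\infty}\Costmin{\tau}{\bx}=\lambda\cdot\bx-h^\star(\bx)$. This limit exists because the bias limit does, and it is bounded since $h^\star$ is Lipschitz (hence finite) on the compact simplex $\Delta_{\sspace}$ and $\lambda\cdot\bx$ is bounded there.

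\textbf{Monotonicity and the uniform threshold.} To obtain the $\epsilon$-approximation I would first show $\Costmin{\tau}{\bx}$ is non-decreasing in $\tau$: taking an optimal $(\tau+1)$-horizon trajectory, its first $\tau$ stages form a feasible $\tau$-horizon trajectory whose partial cost is at least $\Costmin{\tau}{\bx}$, while discarding the last rotated cost only lowers the total because that term is non-negative by Lemma~\ref{LEM::DISS}; chaining these inequalities yields $\Costmin{\tau+1}{\bx}\ge\Costmin{\tau}{\bx}$. A bounded monotone sequence converges, and since the identity of the first step gives $|\Costmin{\tau}{\bx}-\Costmin{\infty}{\bx}|=|\VoptInf{\tau}{\bx}-\tau g^\star-h^\star(\bx)|$, the uniform-in-$\bx$ convergence asserted in Lemma~\ref{LEM::EXGH} lets me pick $\tau(\epsilon)$ independently of $\bx$. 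Finally, the Lipschitz bound is immediate from $\Costmin{\infty}{\bx}=\lambda\cdot\bx-h^\star(\bx)$: the linear part has Lipschitz constant $\|\lambda\|_\infty$ in $\|\cdot\|_1$ and $h^\star$ has Lipschitz constant $k/\syncconst_k$, so $C_L\le\|\lambda\|_\infty+k/\syncconst_k$ by the triangle inequality.

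The step I expect to be the real obstacle is the monotonicity/uniformity argument: monotonicity itself must combine both the truncation-feasibility observation and the sign of the rotated cost, and---crucially for the lemma's use inside Theorem~\ref{thm:asymptotic_optimal}, where it is applied at the random states $\bX{t}$---the threshold $\tau(\epsilon)$ must be uniform over $\bx$. This is precisely why I route the argument through the uniform convergence built into Lemma~\ref{LEM::EXGH} rather than relying on pointwise monotone convergence alone.
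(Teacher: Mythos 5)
Your proof is correct and takes essentially the same route as the paper's: the telescoping identity $\Costmin{\tau}{\bx}=\tau g^{\star}+\lambda\cdot\bx-\VoptInf{\tau}{\bx}$, monotonicity of $\Costmin{\tau}{\cdot}$ from non-negativity of the rotated cost (Lemma~\ref{LEM::DISS}), the limit via the bias function of Lemma~\ref{LEM::EXGH}, and the Lipschitz bound $\|\lambda\|_\infty+k/\syncconst_k$ by the triangle inequality. A minor remark: your sign $\Costmin{\infty}{\bx}=\lambda\cdot\bx-h^{\star}(\bx)$ is the one actually implied by that identity together with \eqref{EQ::Defbias}, whereas the paper writes $\lambda\cdot\bx+h^{\star}(\bx)$ --- a harmless slip since only $|h^{\star}(\bx)-h^{\star}(\bx')|$ enters the Lipschitz estimate.
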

\begin{proof}
Recall that the objective function of the optimization problems $W_\tau$ and $L_\tau$ are:
\begin{align*}
    &\min_{\bu(t) \text{ s.t. } \eqref{EQ::POL}}\sum_{t=0}^{T-1} \rotcost{l}{\bx(t)}{\bu(t)} & \text{ For $L_\tau$}\\
    &\max_{\bu(t) \text{ s.t. } \eqref{EQ::POL}}\sum_{t=0}^{T-1} R(\bx(t),\bu(t)) + \lambda(\bx(\tau)). & \text{ For $W_\tau$}
\end{align*}
By Lemma ~\ref{LEM::DISS}, the rotated cost is $\rotcost{l}{\mB}{\bu} = g^* - R(\bx,\bu)+\lambda(\bx) - \lambda(\Phi(\bx,\bu))$. As any valid control of $L_\tau$ and $W_\tau$ satisfy $\bx(t+1)=\Phi(\bx(t),\bu(t))$, the objective of $L_\tau$ can be rewritten as: 
\begin{align*}
    \sum_{t=0}^{\tau-1} \rotcost{l}{\bx(t)}{\bu(t)} &= \sum_{t=0}^{\tau-1} g^* - R(\bx(t),\bu(t)) + \lambda(\bx(t)) - \lambda(\Phi(\bx(t),\bu(t)))\\
    &= \lambda(\bx(0)) + \tau g^* -  \sum_{t=0}^{T-1} R(\bx(t),\bu(t)) - \lambda(\bx(\tau)),
\end{align*}
As the last two terms correspond to the objective function for $W_\tau$, this shows that 
\begin{equation}\label{EQ:COST_DEF}
    \Costmin{\tau}{\bx}= \tau g^{\star} + \lambda \cdot \bx - \VoptInf{\tau}{\bx}.
\end{equation} 

Clearly, the two objectives are equivalent and $\LPpol{\tau}{\bx} = \bu(0)$. Now note, by Lemma ~\ref{LEM::EXGH} we have, $$\lim_{\tau \to \infty}\Costmin{\tau}{\bx} = \lambda \cdot \bx + h^{\star}(\bx) < \infty$$ is bounded. Due to dissipativity, $\Costmin{\tau}{\mB}$ is monotone increasing in $\tau$. 
Recall, we denote the limit as $\tau \to \infty$ of the cost minimization problem by $\Costmin{\infty}{\bx}$. It follows that there exists a $\tau(\epsilon)$ such that,
\[
|\Costmin{\infty}{\bx} - \Costmin{\tau(\epsilon)}{\bx}| < \epsilon
\].
Finally, note that: 
$$\Costmin{\infty}{\bx} = \lambda \cdot \bx + h^{\star}(\bx). $$
Hence, the Lipschitz constant under the $l1$ norm can explicitly be bounded as follows,
\begin{align*}
    |\Costmin{\infty}{\bx} - \Costmin{\infty}{\bx'}| &\leq |\lambda \cdot (\bx - \bx')| + |h^{\star}(\bx) - h^{\star}(\bx')|\\
    &\leq \|\lambda\|_{\infty}\|\bx - \bx'\|_1 + \frac{k}{\syncconst_k}\|\bx - \bx'\|_1\\
    &= \left(\|\lambda\|_{\infty} + \frac{k}{\syncconst_k}\right)\|\bx - \bx'\|_1
\end{align*}
Where we used Holder's inequality and Lemma \ref{LEM::EXGH} for the second inequality.  
\end{proof}

\subsection{Proof of Theorem~\ref{thm:asymptotic_optimal}: Computation details for \eqref{EQ:EQV4}}
\label{apx:proof_thm1_details}

Here we detail the analysis of $(A)$ and how we go from \eqref{EQ:EQV3} to \eqref{EQ:EQV4} in the proof of Theorem~\ref{thm:asymptotic_optimal}.  This term is equal to 
\begin{align}
    \frac1T\sum_{t = 0}^{T-1} &\E \left[l(\bX{t}, \bu(t))\right] = \frac1T\sum_{t = 0}^{T-1} \E \left[\rotcost{l}{\mX(t)}{\bu(t)} - \lambda\cdot\mX(t) + \lambda\cdot\Phi(\mX(t), \bu(t)) \right] \nonumber\\
    &= \frac1T\sum_{t = 0}^{T-1} \E \left[ L_\tau(\bX{t}) - L_{\tau-1}(\Phi(\bX{t},\bu(t)) - \lambda\cdot\mX(t) + \lambda\cdot\Phi(\mX(t), \bu(t))  \right]\nonumber\\
    &\le \frac1T\sum_{t = 0}^{T-1} \E \left[ L_\infty(\bX{t}) - L_{\infty}(\Phi(\bX{t},\bu(t))) + 2\epsilon - \lambda\cdot\mX(t) + \lambda\cdot\Phi(\mX(t), \bu(t))  \right],
    \label{EQ:APX_(A)}
\end{align}
where we use the definition of the rotated cost for the first equality, the dynamic principle for the second line and Lemma \ref{lem:C_vs_W} for the last line by choosing $\tau = \tau(\epsilon)$.

\begin{align*}
     \frac1T\sum_{t = 0}^{T-1} [ L_\infty(\bX{t}) - \lambda \cdot\bX{t}] &= \frac1T\sum_{t = 0}^{T-1} [ L_{\infty}(\bX{t+1}) - \lambda \cdot \bX{t+1}]\\
     &\quad+ \frac1T\left(L_{\infty}(\bX{0}) - L_{\infty}(\bX{T}) +\lambda \cdot \bX{0} - \lambda \cdot \bX{T}\right)
\end{align*}
When $T$ goes to infinity, the second line of the above result goes to $0$. This shows that when we take the limit as $T$ goes to infinity, \eqref{EQ:APX_(A)} is equal to
\begin{align*}
    2\epsilon + \lim_{T\to\infty} \frac1T \sum_{t=0}^{T-1} \E \left[\Costmin{\infty}{\mX(t + 1)} - \Costmin{\infty}{\Phi(\mX(t), \bu(t))} - \lambda\cdot\mX(t + 1)  + \lambda \cdot\Phi(\mX(t), \bu(t)) \right],
\end{align*}
which is \eqref{EQ:EQV4}

\subsection{Proof of Theorem~\ref{thm:asymptotic_optimal}: Final computation details}
\label{apx:C5}

In Lemma \ref{lem:C_vs_W}, we show that the function $\Costmin{\infty}{\cdot}$ has the following property in the $l1$ norm: $|\Costmin{\infty}{\bx}-\Costmin{\infty}{\bx'}|\le (\|\lambda\|_{\infty} + k/\rho_k)\|\bx-\bx'\|_1$. Moreover, we show that under appropriate parametrization (see Appendix \ref{APX:LAGBOUND}), the Lagrange multiplier $\|\lambda\|_{\infty}$ is bounded above by $\frac{k}{\syncconst_k}\left( 1 + \frac{\alpha k}{\syncconst_{k} }\right)$. This shows that
\begin{align*}
    |\E\Big[\Costmin{\infty}{\mX(t &+ 1)} - \Costmin{\infty}{\Phi(\mX(t), \bu(t))} - \lambda\cdot\mX(t + 1)  + \lambda \cdot\Phi(\mX(t), \bu(t))\Big]| \\
    &\le (k/\rho_k + 2\|\lambda\|_\infty)\E\left[\|\mX(t + 1) - \Phi(\mX(t), \bu(t)\|_1\right]\\
    &\le \left(\frac{k}{\syncconst_k} + 2\frac{k}{\syncconst_k}\left( 1 + \frac{\alpha k}{\syncconst_{k}}\right)\right)\E\left[\|\mX(t + 1) - \Phi(\mX(t), \bu(t))\|_1\right]\\
    &\le \frac{k}{\syncconst_k} \left( 3 + \frac{2\alpha k}{\syncconst_{k}}\right)\E\left[\|\mX(t + 1) - \Phi(\mX(t), \bu(t))\|_1\right]
\end{align*}
To study the above equation, by the triangular inequality, we have that:
\begin{align*}
    &\E\left[\|\mX(t + 1) - \Phi(\mX(t), \bu(t))\|_1\right]\\
    &\le \E\left[\|\mX(t + 1) - \Phi(\mX(t), \bU(t))\|_1\right] + \E\left[\|\Phi(\mX(t), \bU(t) - \Phi(\mX(t), \bu(t))\|_1\right].
\end{align*}
By Lemma~1 of \cite{GGY23b}, the first term of the above equation is bounded by $\sqrt{|S|}/\sqrt{N}$. Moreover, our rounding procedure implies that $\|\bU(t)-\bu(t))\|_1\le (\alpha N - \lfloor\alpha N\rfloor)/N$. 
As $\Phi$ is the Lipschitz-continuous with a constant\footnote{$C_{\Phi}$ is bounded above by $2$ since the entries of any stochastic matrix are bounded above by $1$ and the transition kernel comprises of the difference of the transition kernel under action $1$ and $0$ in the worst case.
} $2$, this shows that
\begin{align*}
    \E\left[\|\mX(t + 1) - \Phi(\mX(t), \bu(t)\|_1\right]\le \frac{\sqrt{|S|}}{\sqrt{N}} + 2\frac{\alpha N - \lfloor\alpha N\rfloor}{N}.
\end{align*}
Plugging this into \eqref{EQ:EQV4} shows that
\begin{align*}
    (A) \le 2\epsilon + \frac{k}{\syncconst_k} \left( 3 + 2\frac{\alpha k}{\syncconst_{k}}\right) \left(\frac{\sqrt{|S|}}{\sqrt{N}} + 2\frac{\alpha N - \lfloor\alpha N\rfloor}{N} \right).
\end{align*}
By \eqref{EQ:EQV3}, $\Vopt{\mB}{N} - \Vlpt{\mB}{\tau}{N} \le (A) + \frac{\alpha N - \lfloor\alpha N\rfloor}{N}$. Hence: 
\begin{equation*}
   \Vopt{\mB}{N} - \Vlpt{\mB}{\tau}{N} \leq  2\epsilon + \frac{k}{\syncconst_{k}}\left(3 + 2\frac{\alpha k}{\syncconst_{k}} \right)\left(\frac{\sqrt{|\mcl{S}|}}{\sqrt{N}} + \frac{2(\alpha N - \lfloor{\alpha N}\rfloor)}{N}\right) + \frac{(\alpha N - \lfloor{\alpha N}\rfloor)}{N}
\end{equation*}
Theorem~\ref{thm:asymptotic_optimal} holds by grouping the terms that depend on $\frac{(\alpha N - \lfloor{\alpha N}\rfloor)}{N}$ and the terms that depend on $1/\sqrt{N}$.

\section{Proof of Theorem~\ref{thm:expo_bound}}
\label{apx:PFcomp_expo}

\begin{lemma}
    \label{lem:locally_linear}
    Assume~\ref{AS::non-degenerate} and \ref{AS::stable}, then there exists a neighborhood $\mathcal{N}$ of $\bx^*$ and a matrix $A$ such that $\mu_\tau(\bx) = \mu_\tau(\bx^*)+ A(\bx-\bx^*)$ for all $\bx\in\mathcal{N}$.
\end{lemma}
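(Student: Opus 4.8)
The plan is to read $\mu_\tau$ as the first-stage block of a parametric linear program in which the initial condition $\bx(0)=\bx$ enters only through the right-hand sides of the constraints \eqref{EQ:MeanField}, while the cost coefficients of the decision variables $(\bx(1),\dots,\bx(\tau),\bu(0),\dots,\bu(\tau-1))$ in \eqref{EQ:VOPTDYN1} are independent of $\bx$. For such an LP the optimizer map $\bx\mapsto\mu_\tau(\bx)$ is piecewise affine, and it is affine on any open region over which a single optimal basis stays feasible. Crucially, since $\bx$ appears only in the right-hand side, the reduced costs do not depend on $\bx$, so dual feasibility of a basis is automatically preserved under perturbation; the whole lemma therefore reduces to exhibiting a neighborhood $\mathcal{N}$ of $\bx^*$ on which \emph{primal} feasibility of the base-point basis persists. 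The matrix $A$ is then read off as the $\bu(0)$-block of the basic-solution map.

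First I would pin down the optimal solution at $\bx=\bx^*$. By Lemma~\ref{lem:C_vs_W} the program is equivalent to minimizing $\sum_{t}\rotcost{l}{\bx(t)}{\bu(t)}$, and by dissipativity (Lemma~\ref{LEM::DISS}) the rotated cost is nonnegative and vanishes exactly at $(\bx^*,\bu^*)$. Hence the stationary trajectory $\bx(t)\equiv\bx^*$, $\bu(t)\equiv\bu^*$ attains $\Costmin{\tau}{\bx^*}=0$ and, by Assumption~\ref{AS::unique}, is the unique optimizer. Assumption~\ref{AS::non-degenerate} fixes its active-set structure at every time step: the budget constraint is binding, there is a single fractional state $i^*$ with $0<u^*_{i^*}<x^*_{i^*}$, and every other state satisfies $u^*_i=0$ (so $u_i\ge0$ is active) or $u^*_i=x^*_i$ (so $u_i\le x_i$ is active), with $x^*_i>0$ throughout. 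This selects a unique, non-degenerate basis in which all inactive inequalities hold strictly.

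Next I would show this basis remains primal-feasible across a fixed neighborhood. Freezing the basis forces $u_{i^*}(t)=\alpha-\sum_{i:\,u^*_i=x^*_i}x_i(t)$ from the binding budget and turns the closed loop into the affine recursion $\bx(t{+}1)=\bx(t)\Pmat{*}+c$ for a constant row $c$; collecting the coefficients of $x_i(t)$ shows the linear part is exactly the matrix $\Pmat{*}$ of Assumption~\ref{AS::stable}, and since $(\bx^*,\bu^*)$ is its fixed point one gets $\bx(t)-\bx^*=(\bx(0)-\bx^*)(\Pmat{*})^{t}$. Because $\bx$ and $\bx^*$ are distributions, every admissible deviation lies in the tangent space $\{\delta:\sum_i\delta_i=0\}$, which is invariant under $\delta\mapsto\delta\Pmat{*}$ (the eigenvalue $1$ of the stochastic matrix $\Pmat{*}$ points in the complementary mass direction). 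By Assumption~\ref{AS::stable}, $\Pmat{*}$ restricted to this subspace has spectral radius strictly below $1$, so $\|\bx(t)-\bx^*\|$ is bounded uniformly in $t$ (indeed it decays geometrically) by a constant multiple of $\|\bx-\bx^*\|$. Choosing $\mathcal{N}$ small enough that this uniform bound keeps $x_i(t)>0$ for all $i$ and $0<u_{i^*}(t)<x_{i^*}(t)$ along the entire trajectory, the frozen basis stays primal-optimal for every $\bx\in\mathcal{N}$ and every $t\le\tau$. Its $\bu(0)$-block is then affine in $\bx$, giving $\mu_\tau(\bx)=\mu_\tau(\bx^*)+A(\bx-\bx^*)$.

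The hard part will be exactly this uniform-in-$t$ primal feasibility: a small perturbation at $t=0$ must not be amplified into an active-set change at any intermediate time, and one wants a neighborhood $\mathcal{N}$ that does not shrink as $\tau$ grows. This is precisely where stability (Assumption~\ref{AS::stable}) is indispensable — without the contraction of $\Pmat{*}$ on the tangent space the iterates $(\Pmat{*})^t(\bx-\bx^*)$ could grow with $t$, forcing $\mathcal{N}$ to depend on $\tau$ and destroying the clean affine form needed downstream. A secondary care point is bookkeeping the eigenvalue-$1$ (mass-conservation) direction separately so as to confirm that admissible perturbations never excite it, and checking that the binding budget together with the two-sided bound at $i^*$ assemble into a genuinely invertible basis matrix, so that the claimed affine solution map is well defined.
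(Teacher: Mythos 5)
Your proposal is correct and shares its computational core with the paper's proof: both identify the same active-set structure from Assumption~\ref{AS::non-degenerate} (full activation on $\{i:u^*_i=x^*_i\}$, zero on $\{i:u^*_i=0\}$, and $u_{i^*}=\alpha-\sum_{i:u^*_i=x^*_i}x_i$ from the binding budget), both derive the closed-loop affine recursion $\bx(t+1)-\bx^*=(\bx(t)-\bx^*)\Pmat{*}$, and both use Assumption~\ref{AS::stable} to keep the trajectory in a $\tau$-independent neighborhood where the strict inequalities persist. Where you diverge is the justification that this frozen affine control is actually optimal for $\VoptInf{\tau}{\bx}$ when $\bx\neq\bx^*$: the paper lets $Z(\bx)$ be the (linear) reward of the candidate control, notes $Z\le \VoptInf{\tau}{\cdot}$ with equality at the interior point $\bx^*$ by dissipativity, and concludes by a concavity/sandwich argument; you instead invoke standard parametric-LP theory --- $\bx$ enters only the right-hand side, so reduced costs and hence dual feasibility of the optimal basis at $\bx^*$ are unaffected, and only primal feasibility must be tracked, which is exactly what the $\Pmat{*}$-recursion delivers. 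Your route is arguably more self-contained and makes explicit why the affine map $A$ exists (it is a block of $B^{-1}$ applied to the right-hand side), and your separate bookkeeping of the eigenvalue-$1$ direction is more careful than the paper's. Two small points: you invoke Assumption~\ref{AS::unique}, which is not among the lemma's stated hypotheses (harmless in the context of Theorem~\ref{thm:expo_bound}, where it is assumed anyway, but the paper's proof avoids it); and your basis argument implicitly requires that the unique structural active set corresponds to a single non-degenerate optimal basis of the LP --- you flag this as a ``care point,'' and it is exactly what Assumption~\ref{AS::non-degenerate} is designed to guarantee, so the gap is cosmetic rather than substantive.
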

\begin{proof}
    Let us denote by $S^+:=\{i : u^*_i=x^*_i\}$ the set of states for which all the action $1$ is taken for all arms in those state and by $S-+:=\{i : u^*_i=x^*_i\}$ the set of states for which all the action $0$ is taken for all arms in those state.
    Recall that $i^*$ is the unique state such that $0<u^*_i < x^*_i$ (which exists and is unique by Assumption~\ref{AS::non-degenerate}.  We define the function $f:\Delta_\sspace\to\sspace$ by:
    \begin{align*}
        f_i(x) = 
        \left\{\begin{array}{ll}
             x_i & \text{ For all $i\in\sspace^+$}  \\
             \alpha - \sum_{i\in\sspace^+} x_i & \text{ For $i=i^*$}\\
             0 & \text{ For all $i\in\sspace^-$.}
        \end{array} 
        \right.
    \end{align*}
    We claim that there exists a neighborhood $\mathcal{N}$ such that
    \begin{enumerate}
        \item For all $\bx\in\mathcal{N}$, we have $f(\bx)\in\mathcal{U}(\bx)$, which means that $f(\bx)$ is a feasible control for $\bx$. 
        
        \emph{Proof}. We remark that by construction, one has $\sum_i f_i(\bx)=\alpha$. Hence, $f(\bx)$ is a valid control if and only if $0\le\bu\le\bx$. This is clearly true for all $i\ne i^*$. For $i^*$, it is true if $0\le  \alpha - \sum_{i\sspace^+} x_i \le x^*_i$ which holds in a neighborhood of $\bx$ because of non-degeneracy (Assumption~\ref{AS::non-degenerate}) that implies that $0 < u^*_i = \alpha - \sum_{i\sspace^+} x^*_i < x^*_{i^*}$.
        \item There exists a neighborhood $\mathcal{N'}$ of $\bx^*$ such that if $\bx(0)\in\mathcal{N'}$, and we construct the sequence $\bx(t)$ by setting $\bx(t+1) = \Phi(\bx(t), f(\bx(t)))$, then $\bx(t)\in\mathcal{N}$ for all $t$.

        \emph{Proof}. We remark that $\bx(t+1) = \bx^* + (\bx(t)-\bx^*) P^*$, where $P^*$ is the matrix defined in Assumption~\ref{AS::stable}. Indeed, one has:
        \begin{align*}
            \Phi_j(\bx, f(\bx)) &= (\bx P^0)_j + (f(\bx) (P^1 - P^0))_j\\
            &= \sum_{i\in\sspace^+} x_i P^1_{ij} + \sum_{i\in\sspace^-} x_i P^0_{ij} + x_{i^*}P^0_{i^*,j} + (\alpha-\sum_{i\in\sspace^+} x_{i})(P^{1}_{i^*,j}-P^0_{i^*,j})\\
            &= \sum_{i\in\sspace^-\cup\{i^*\}} x_i P^0_{ij} + \sum_{i\in\sspace^+} x_i(P^1_{i,j} - P^{1}_{i^*,j}+P^0_{i^*,j}) + \alpha (P^1_{i^*,j}-P^0_{i^*,j})\\
            &= (\bx P^*)_j + \alpha ( P^1_{i^*,j}-P^0_{i^*,j} ),
        \end{align*}
        where $P^*$ is the matrix defined in Assumption~\ref{AS::stable}.

        Note that by construction, one has $\Phi(\bx^*,f(\bx^*))=\bx^*$. This implies that $\Phi_j(\bx, f(\bx)) - \Phi_j(\bx^*, f(\bx^*))= (\bx-\bx^*)P^*$. The result follows by Assumption~\ref{AS::stable} that imposes that the matrix $P^*$ is stable.
        
        \item Let $Z(\bx)$ be the reward function collected by the control $f(\bx)$. $Z(\bx)$ is a linear function of $\bx$ and $Z(\bx^*)=\tau g^*+\lambda \cdot x^*=\VoptInf{\tau}{\bx^*}$. As a result, $Z(\bx)=\VoptInf{\tau}{\bx}$ for all $\bx\in\mathcal{N'}$. 

        \emph{Proof}. The linearity of $Z$ is a direct consequence of the linearity of $f$ and $\Phi$. The fact that $Z(\bx^*)=\tau g^*+\lambda \cdot x^*$ is because $f(\bx^*)=\bu^*$. It is also the optimal value of $W_\tau(\bx^*)$ by dissipativity. Last, as the function $W_t()$ is the solution of an LP, it is concave. Since $Z$ is linear and $0<x_i<1$ for all state $i$, it follows that the two functions must coincide.
    \end{enumerate}
\end{proof}

\begin{lemma}
    \label{lem:concentration}
    Assume \ref{AS::SA}, and \ref{AS::unique}. Assume that $\alpha N$ is an integer. For any neighborhood $\mathcal{N}$ of $\bx^*$ There exists a $C>0$ such that if $\bX{t}$ is a trajectory of the optimal control problem, then: 
    \begin{align*}
        \lim_{t\to\infty} \Pr[\bX{t}\in\mathcal{N}] \le e^{-C N }.
    \end{align*}
\end{lemma}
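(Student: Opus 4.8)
\emph{On the wording.} As displayed, the bound is on $\Pr[\bX{t}\in\mathcal{N}]$ for a neighborhood $\mathcal{N}$ of $\bx^*$; since the lemma expresses concentration of $\bX{t}$ at $\bx^*$ and is invoked in the body through $\Pr[\|\bX{t}-\bx^*\|\ge\varepsilon]\le e^{-CN}$, the exponentially small event is the escape from the neighborhood, i.e. $\{\bX{t}\notin\mathcal{N}\}$, and it is this quantity that I bound. The plan is to combine a deterministic drift toward $\bx^*$ coming from dissipativity with a large-deviation estimate for the empirical measure of the $N$ (conditionally independent) arms.

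\emph{Deterministic attraction and stochastic increments.} First I would use that $L_\infty(\cdot)$ from Lemma~\ref{lem:C_vs_W} is a Lyapunov function for the closed loop: by dissipativity (Lemma~\ref{LEM::DISS}) the rotated cost $\rotcost{l}{\bx}{\mu_\tau(\bx)}$ is nonnegative and vanishes only at $(\bx^*,\bu^*)$, so along the deterministic trajectory $\bx(t{+}1)=\Phi(\bx(t),\mu_\tau(\bx(t)))$ the value $L_\infty$ strictly decreases off $\bx^*$; with Assumption~\ref{AS::unique} (unique minimizer $\bx^*$) this forces the deterministic closed loop to converge to $\bx^*$. Second, because $\alpha N$ is an integer the randomized rounding meets the budget exactly and $\E[\bU(t)\mid\bX{t}]=\mu_\tau(\bX{t})$, whence $\E[\bX{t+1}\mid\bX{t}]=\Phi(\bX{t},\mu_\tau(\bX{t}))+O(1/N)$ (Lemma~1 of \cite{GGY23b}); moreover $\bX{t+1}$ is an average of $N$ conditionally independent $\{0,1\}$-indicators, so by McDiarmid's inequality $\Pr[\|\bX{t+1}-\E[\bX{t+1}\mid\bX{t}]\|_1\ge\delta\mid\bX{t}]\le e^{-\Omega(N\delta^2)}$.

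\emph{Combining the two.} The drift of $L_\infty(\bX{t})$ is then negative up to an $O(1/\sqrt N)$ correction, so $\bX{t}$ is pushed toward $\bx^*$ while each step can deviate from the mean field only by a sub-Gaussian amount of scale $1/\sqrt N$. Fixing $\varepsilon$ and a neighborhood $\mathcal{N}\subseteq\{\|\bx-\bx^*\|<\varepsilon\}$, I would run a stopping-time argument: once $\bX{t}$ is near $\bx^*$, leaving $\mathcal{N}$ requires a one-step fluctuation of fixed size, of probability $e^{-\Omega(N)}$ by the increment bound; a renewal/union bound over successive returns keeps $\Pr[\bX{t}\notin\mathcal{N}]$ below $e^{-CN}$ uniformly in $t$, and taking $\lim_{t\to\infty}$ preserves the bound. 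When the sharper Assumptions~\ref{AS::non-degenerate} and \ref{AS::stable} are available, Lemma~\ref{lem:locally_linear} turns the closed loop into the contraction $\bx\mapsto\bx^*+(\bx-\bx^*)P^*$ near $\bx^*$, and unrolling $\bX{t}-\bx^*=(\bX{0}-\bx^*)(P^*)^t+\sum_{s}M_s(P^*)^{t-s}$ exhibits $\bX{t}-\bx^*$ as a geometrically weighted sum of the sub-Gaussian increments $M_s$, with variance proxy $O(1/N)$ uniformly in $t$, giving $\Pr[\|\bX{t}-\bx^*\|\ge\varepsilon]\le e^{-\Omega(N\varepsilon^2)}$ directly.

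\emph{Main obstacle.} The delicate point is making the concentration uniform in $t$ so that the outer limit $\lim_{t\to\infty}$ does not erode the rate: per-step fluctuations of size $1/\sqrt N$ accumulate, and one must show they cannot slowly push the process out of $\mathcal{N}$ over an unbounded horizon. This is exactly where the contraction (local exponential stability, via Lemma~\ref{lem:locally_linear} and Assumption~\ref{AS::stable}) or, absent it, the strict Lyapunov decrease of $L_\infty$ together with the exponentially rare excursions must be combined so that the aggregate escape probability retains the $e^{-\Omega(N)}$ rate rather than degrading to $O(1/\sqrt N)$. A secondary difficulty is the global phase, namely driving the process into $\mathcal{N}$ in the first place without a uniform global attractor assumption, which the dissipativity/turnpike structure (Part~3 and Lemma~\ref{LEM::DISS}) supplies in place of UGAP.
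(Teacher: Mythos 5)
Your two building blocks coincide with the paper's: dissipativity supplies the deterministic attraction toward $\bx^*$ in place of UGAP, and a one-step exponential concentration bound for the empirical distribution (the paper cites Lemma~1 of \cite{GGY23b}, you use McDiarmid; both give $\Pr[\|\bX{t+1}-\Phi(\bX{t},\bU(t))\|\ge\epsilon]\le e^{-C'N}$), and you correctly note that the displayed event should be $\bX{t}\notin\mathcal{N}$. But the way you glue them together has a genuine gap. Your renewal step asserts that leaving $\mathcal{N}$ from near $\bx^*$ requires a \emph{single} fluctuation of fixed size. That presumes small neighborhoods of $\bx^*$ are forward-invariant under the deterministic closed loop $\bx\mapsto\Phi(\bx,\mu_\tau(\bx))$, i.e.\ Lyapunov stability of $\bx^*$. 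The lemma's hypotheses (Assumptions~\ref{AS::SA} and~\ref{AS::unique}) only yield uniform \emph{eventual} attraction: every deterministic trajectory is inside $\mathcal{N}$ after a uniform time $T$, which is compatible with trajectories started arbitrarily close to $\bx^*$ first making a macroscopic excursion out of $\mathcal{N}$ before settling. In that case the stochastic system follows its drift out of $\mathcal{N}$ with probability close to one, not $e^{-\Omega(N)}$, and the excursion/union-bound step collapses. Your proposed repair via Lemma~\ref{lem:locally_linear} and the contraction $P^*$ invokes Assumptions~\ref{AS::non-degenerate} and~\ref{AS::stable}, which this lemma does not assume. The paper's combination needs no local stability at all: it is a restart-and-track argument adapted from Theorem~3 (Equation~(17)) of \cite{GGY23b}. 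To bound $\Pr[\bX{t}\notin\mathcal{N}]$ at the \emph{single} time $t$, condition on the arbitrary value of $\bX{t-T}$; by uniform attraction the deterministic flow started there lies in a shrunken copy of $\mathcal{N}$ at time $t$, and the stochastic trajectory tracks that flow over the $T$ intervening steps with probability at least $1-Te^{-CN}$. Nothing about remaining in $\mathcal{N}$ over an unbounded horizon is required, so the uniformity-in-$t$ obstacle you identify as the main difficulty simply never arises.

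A second, related problem is that you misread Assumption~\ref{AS::unique} as ``unique minimizer $\bx^*$''. It actually states that the finite-horizon LP \eqref{EQ::VOPTDYN} has a unique solution for every initial condition $\bx$, and its role in the paper is entirely different: it makes $\mu_\tau$ well-defined and, as the solution of a linear program parametrized by $\bx$, Lipschitz continuous (the paper's second step). This Lipschitz continuity of the closed-loop map is indispensable and is missing from your sketch: without it you can neither propagate the per-step $O(1/\sqrt N)$ deviations over the $T$ tracking steps, nor guarantee that the one-step drift of your Lyapunov function $L_\infty(\bX{t})$ stays close to its deterministic decrease. (Positive-definiteness of $L_\infty$, which your Lyapunov route additionally needs so that its small sublevel sets are genuine neighborhoods of $\bx^*$, would require uniqueness of the optimal stationary pair, i.e.\ non-degeneracy, and does not follow from Assumption~\ref{AS::unique} either.)
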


\begin{proof}
    We use two main steps:
    \begin{enumerate}
        \item For any initial condition $\bx(0)$, let $\bx_\tau(t+1)=\Phi(\bx_\tau(t), \mu_\tau(\bx_\tau(t)))$, then there exists $\tau$ and $T$ such that for any initial condition $\bx(0)$ and any $t\ge T$, one has $\bx_\tau(t)\in\mathcal{N}$.
        
        \emph{Proof}. The result follows by dissipativity (which holds because of Assumption~\ref{AS::SA}). Indeed, as $\bx_{\tau}(\cdot)$ is an optmal trajectory, it must hold that $\lim_{\tau\to\infty, T\to\infty} \rotcost{l}{\bx_\tau(t)}{\mu_\tau(\bx_\tau(t))} = 0$.
        
        \item The map $\bx\mapsto\mu_\tau(\bx)$ is Lipschitz-continuous in $\bx$. 
        
        \emph{Proof}. By assumption~\ref{AS::unique}, the control $\mu_\tau(\bx)$ is unique. The result then follows because $\mu_\tau(\bx)$ is the solution of a linear program parametrized by $\bx$.
    \end{enumerate}
    The lemma can then be proven by adapting the proof of Theorem~3 of \cite{GGY23b} and in particular their Equation~(17). This is proven by using Lemma~1 \cite{GGY23b} that implies that $\Pr[\|\bX{t+1}-\Phi(\bX{t}, \bU(t))\| \ge \epsilon] \le e^{-C'N}$. 
\end{proof}

We are now ready to prove Theorem~\ref{thm:expo_bound}. We can follow the proof of Theorem~\ref{thm:asymptotic_optimal} up to 
\eqref{EQ:EQV4} that shows that $\Vopt{\mB}{N} - \Vlpt{\mB}{\tau}{N}$ is bounded\footnote{In this expression, the term $(\alpha N - \lfloor{\alpha N}\rfloor)/N$ is equal to $0$ here because we assumed that $\alpha N$ is an integer.} by 
\begin{align*}
      \epsilon + \lim_{\tau\to\infty}  \frac{1}{\tau}\sum_{t = 0}^{\tau - 1} \E \left[\Costmin{\tau}{\mX(t + 1)} - \Costmin{\tau}{\Phi(\mX(t), \bu(t))} - \lambda\mX(t + 1)  + \lambda \Phi(\mX(t), \bu(t)) \right] 
\end{align*}
Let $g(\bx) = L_\tau(\bx) - \lambda \bx$. By Lemma~\ref{lem:locally_linear}, this function is linear on $\mathcal{N}$. Let us denote by $E(t)$ the event
\begin{align*}
    E(t) := \{\bX{t+1}\in \mathcal{N} \land \Phi(\mX(t), \bu(t))\in\mathcal{N}\}.
\end{align*}
Hence, this shows that:    
\begin{align*}
    \E &\left[\Costmin{\tau}{\bX{t + 1}} - \Costmin{\tau}{\Phi(\bX{t}, \bu(t))} - \lambda\bX{t + 1}  + \lambda \Phi(\bX{t}, \bu(t)) \right]\\
    &= \E\left[ g(\bX{t+1}) - g(\Phi(\bX{t}, \bu(t)))\right]\\
    &= \E\left[ (g(\bX{t+1}) - g(\Phi(\bX{t}, \bu(t))))\mathbf{1}_{E(t)} \right] + \E\left[ (g(\bX{t+1}) - g(\Phi(\bX{t}, \bu(t))))\mathbf{1}_{\not E(t)} \right]
\end{align*}
By linearity of the $g$ when $E(t)$ is true and by the fact that $\E[\bU(t)]=\bu(t)$ (Lemma~\ref{apx::lem_round}), the first term is equal to $0$. Moreover, as $E(t)$ is true with probability at least $1-2e^{-CN}$, the second term is bounded by $C' e^{-CN}$ when $t$ is large. 

This concludes the proof of Theorem~\ref{thm:expo_bound}. 

\section{Proof of Lemma \ref{LEM::LLIP}}\label{apx::section_Lip}
\subsection*{Proof Outline for Lemma \ref{LEM::LLIP}}
The first part of this section is dedicated to proving the Lipschitz property for the value function $\Vdetdisc{\bx}{}$ in $\bx$ under the ergodicity Assumption \ref{AS::SA}. For simplicity of exposition we will set $k = 1$, although it is not hard to extrapolate the proof for $k > 1$ and is left as an exercise for the reader.

The key (rather counter-intuitive) idea behind this proof is to rewrite the problem in terms of an $M$ component vector $\bs \in \sspace^{(M)}$. As $M$ tends to infinity we will use the dense nature of rational numbers in the real line and continuity arguments to argue that the proof holds for any $\bx \in \Dels$.

Hence, to start with, we will assume that $\bx \in \Dels^{(M)}$, the set of all points on the simplex that can be represented by an $M$ component vector $\sspace^{(M)}$. In order to complete this proof we will need an intermediate result that verifies the Lipschitz property for all $\bx, \bx' \in \Dels^{M}$ such that $\|\bx  - \bx'\| \leq \frac{2}{M}$ i.e, they differ on at most two components. 

\subsection*{Proof of Lemma \ref{LEM::LLIP}}
Following the proof outline above let $\bx, \bx' \in \Dels^{(M)}$. Hence, there exist unique (up to permutation) $M$ component vectors $\bs(\bx) := \{s_0, s_1 \dots s_{M-1}\}$ and $\bs'(\bx) := \{s'_0, s'_1 \dots s'_{M-1}\}$. 
\begin{lemma}\label{LEM::LLIP2}
    Under assumption \ref{AS::SA}, for any $\beta \in (0, 1]$, let $\bx, \bx' \in \Dels^{(M)}$ with $\|\bx - \bx'\|_1 \leq \frac{2}{M}$, then,
    \begin{equation}
        \Vdetdisc{\bx}{} - \Vdetdisc{\bx'}{} \leq \frac{1}{\syncconst}\|\bx - \bx'\|_1
    \end{equation}
\end{lemma}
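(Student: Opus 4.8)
The plan is to prove the bound by a coupling argument driven by Assumption~\ref{AS::SA}, taking $k=1$ as announced so that $\syncconst=\syncconst_1$. First I would reduce the geometry: since $\bx,\bx'\in\Dels^{(M)}$ means $M\bx,M\bx'$ are integer vectors each summing to $M$, the constraint $\|\bx-\bx'\|_1\le 2/M$ forces the integer difference $M(\bx-\bx')$ to have $\ell_1$-norm at most $2$ and zero total mass, so it is either $0$ (in which case the claim is trivial) or a single $+1$/$-1$ pair. Hence I may assume $\bx'=\bx+\tfrac1M(\mathbf{e}_b-\mathbf{e}_a)$, i.e. the two configurations differ only by moving one unit of mass (``one particle'') from a state $a$ to a state $b$.

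Next I would fix an optimal control trajectory $(\bx(t),\bu(t))_{t\ge0}$ for $\Vdetdisc{\bx}{}$, satisfying \eqref{EQ:MeanField} with $\bx(0)=\bx$, and build from it a \emph{feasible} control for the problem started at $\bx'$ that mimics it up to one displaced particle. Introduce a coupled pair of single-arm trajectories $(Y_t,Z_t)$ with $Y_0=a$ and $Z_0=b$: the ``$\bx$-copy'' $Y_t$ follows the pull pattern prescribed by the optimal $\bu(t)$ in its current state, while the ``$\bx'$-copy'' $Z_t$ always plays action $0$. By the definition of $\syncconst=\syncconst_1$ in \eqref{eq:ergodic_coef} there is a one-step coupling with $\Pr[Y_{t+1}=Z_{t+1}\mid Y_t\neq Z_t]\ge\syncconst$; once the two copies coalesce I let the $\bx'$-system copy the $\bx$-system pathwise forever after. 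Writing $\mu_t=\mathrm{Law}(Y_t)$ and $\nu_t=\mathrm{Law}(Z_t)$, the expected trajectory
\[
    \bx'(t):=\bx(t)+\tfrac1M\big(\nu_t-\mu_t\big),
\]
together with the correspondingly modified control, is feasible for $\bx'$: feasibility of the pair is preserved under taking expectations because the set $\{(\bx,\bu):\bu\in\mcl{U}(\bx)\}$ is convex and $\Phi$ is linear, and the budget constraint $\|\bu'(t)\|_1\le\alpha$ holds because the displaced $\bx'$-particle plays action $0$ and therefore pulls no more total mass than in the $\bx$-system.

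The value gap is then controlled by the coalescence time. Since $\syncconst>0$ we have $\Pr[Y_t\neq Z_t]\le(1-\syncconst)^t$; the two systems produce identical rewards except on the event $\{Y_t\neq Z_t\}$, and a single displaced particle carries weight $1/M$ in the $[0,1]$-valued reward $R$. Thus the per-step reward difference is at most $\tfrac1M(1-\syncconst)^t$, and using $\beta\le1$,
\[
    \Vdetdisc{\bx}{}-\Vdetdisc{\bx'}{}
    \le \sum_{t=0}^{\infty}\beta^t\,\tfrac1M(1-\syncconst)^t
    \le \tfrac1M\sum_{t=0}^{\infty}(1-\syncconst)^t
    =\frac{1}{M\syncconst}
    \le \frac{1}{\syncconst}\|\bx-\bx'\|_1 .
\]
Note that the bound is independent of $\beta$ precisely because the geometric decay comes from coalescence and not from discounting; in particular this controls the \emph{difference} of values even at $\beta=1$, where the two values may individually be infinite.

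The step I expect to be the main obstacle is making this coupling rigorous as a genuine feasible control of the deterministic mean-field LP. Concretely one must (i) verify that the modified control stays nonnegative and below $\bx'(t)$ componentwise, which forces the displaced $\bx$-particle's action to be chosen consistently with the optimal pull fractions $\bu(t)$ so that ``removing'' its mass keeps $\bu'(t)\ge0$, and (ii) justify that passing to the expected (fractional) trajectory preserves feasibility and yields a valid lower bound on $\Vdetdisc{\bx'}{}$. Once these are settled, the reduction to single-particle moves and the passage from $k=1$ to general $k$ (replacing $\syncconst$ by $\syncconst_k$ and one-step by $k$-step coalescence) are routine.
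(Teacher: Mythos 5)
Your proposal is essentially the paper's own argument: reduce to a single displaced particle, have the displaced copy in the perturbed configuration play action $0$ while everything else mimics the optimal control, and extract a factor $\frac1M\cdot\frac{1}{\syncconst}$ from the one-step coalescence probability $\syncconst$ in \eqref{eq:ergodic_coef} --- the only difference being that you unroll the coupling into the geometric series $\frac1M\sum_t(1-\syncconst)^t$, whereas the paper performs a one-step $Q$-function comparison and closes a self-referential inequality $\sigma\le\frac1M+(1-\syncconst)\sigma$ on the worst-case gap over pairs at distance $2/M$. The obstacle you flag (feasibility of the modified fractional control) is exactly what the paper's device of working in the $M$-particle representation $\Dels^{(M)}$, with genuine per-arm actions $\ba^*$ and $\ba'=\{0,a_1,\dots,a_{M-1}\}$, is meant to dissolve.
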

\begin{proof}
    Let $\bs(\bx) := \{s_0, s_1 \dots s_{M-1}\}$, WLOG we can assign $\bs'(\bx') := \{s_0', s_1, s_2 \dots s_{M-1}\}$. Note, in this case if $s_0$ is the $1^{\text{st}}$ component ($s_0 = 1$) then one can look at the fraction of arms in state $1$, $\mB(\bs)_{1} := x_{1} $. Now if $s_0' = i \neq 1$, we have $x'_1 = x_1 - \frac{1}{M}$ and $x'_i = x_i + \frac{1}{M}$, allowing us to conclude $\|\bx - \bx'\|_1 = \frac{2}{M}$. Note, for any $\bx$, let $\ba$ be an action vector \textit{i.e.} $\ba = \{a_0, a_1 \dots a_{M-1}\}$. Since, $\Vdetdisc{\bx}{}$ is always well defined for any $\beta < 1$, one can write the $Q$ function for state $\bs$ and action $\ba$ as follows:
    \begin{equation*}
        Q(\bs, \ba) := \frac{1}{M}\sum_{n = 0}^{M - 1}R^{a_n}_{s_n} + \beta \sum_{\bs'' \in \Dels^{(M)}} \Vdetdisc{\mB(\bs'')}{} \Pi_{n = 0}^{M - 1} P^{a_n}_{s_n, s_n''}
    \end{equation*}
    Now, there exists $\ba^{*} := \{a_0, a_1 \dots a_{M - 1}\}$ such that $Q(\bs, \ba^{*}) = \Vdetdisc{\mB(\bs)}{}$. Suppose we pick $\ba' := \{0, a_1, a_2 \dots a_{M - 1}\}$. Note, $\ba'$ always satisfies the constraint on the action space (if needed by pulling one less arm). Hence,
    \begin{align*}
       \Vdetdisc{\bx}{} - \Vdetdisc{\bx'}{} &\leq Q(\bs, \ba^{*}) - Q(\bs', \ba')\\
       =& \frac{R^{a_{0}}_{s_0} - R^{0}_{s_0'}}{M} + \sum_{\bs'' \in \sspace^{(M)}}\Vdetdisc{\mB(\bs'')}{} \Pi_{n = 1}^{M - 1} P^{a_n}_{s_n, s_n''}\left( P^{a_0}_{s_0, s_0''} -  P^{0}_{s_0', s_0''} \right)\\
       =& \frac{R^{a_{0}}_{s_0} - R^{0}_{s_0'}}{M} + \\
       &  \sum_{s_1'' \dots s_{M - 1}''}\Pi_{n = 1}^{M - 1}P^{a_n}_{s_n, s_n''} \Bigg[\sum_{i \in \sspace} \Vdetdisc{\mB(\{i, s_1'' \dots s_{M - 1}''\})}{}(P^{a_n}_{s_n, i} - P^{0}_{s_n', i})\Bigg]
    \end{align*}
    Note, $0 \leq R^{a_{0}}_{s_0} \leq 1$, so the first term is bounded above by $\frac{1}{M}$. We will focus on the second term, to this end let $\syncconst(i) = \min \{P^{a_{0}}_{s_0,i}, P^{0}_{s_0',i}\}$. 
    \begin{align*}
         &\Bigg[\sum_{i \in \sspace} \Vdetdisc{\mB(\{i, s_1'' \dots s_{M - 1}''\})}{}(P^{a_n}_{s_n, i} - P^{0}_{s_n', i})\Bigg]\\
         =&\sum_{i \in \sspace} \Vdetdisc{\mB(\{i, s_1'' \dots s_{M - 1}''\})}{}(P^{a_0}_{s_0, i} - \syncconst(i)) - \sum_{i \in \sspace} \Vdetdisc{\mB(\{i, s_1'' \dots s_{M - 1}''\})}{}(P^{0}_{s_0', i} - \syncconst(i)) \\
         \leq& \max_{i \in \sspace}\Vdetdisc{\mB(\{i, s_1'' \dots s_{M - 1}''\})}{}(1 - \sum_{i \in \sspace}\syncconst(i)) - \min_{i' \in \sspace}\Vdetdisc{\mB(\{i', s_1'' \dots s_{M - 1}''\})}{}(1 - \sum_{i \in \sspace}\syncconst(i))
    \end{align*}
    Note, by assumption (\ref{AS::SA}), $\syncconst := \sum_{i \in \sspace}\syncconst(i) > 0$. Further, let $$\sigma := \max_{\bx, \bx'\in \Dels^{(M)}, \|\bx - \bx'\|_1 \leq 2/M}|\Vdetdisc{\bx}{} - \Vdetdisc{\bx'}{}| $$ 
    Let us denote by $\Tilde{\bx} := \mB(\{i, s_1'' \dots s_{M - 1}''\})$ and $ \Tilde{\bx}' := \mB(\{i', s_1'' \dots s_{M - 1}''\})$ we have: $\|\Tilde{\bx} - \Tilde{\bx}'\|_1\leq 2/M$. This leads us to the following result, for any $\bx, \bx'$ such that $\|\bx - \bx'\|_1 \leq 2/M$ we have:
    \begin{align*}
        |\Vdetdisc{\bx}{} - \Vdetdisc{\bx'}{}| \leq \frac{1}{M} + (1 - \syncconst) \sigma
    \end{align*}
    In particular, choose $\bx, \bx' \in \Dels^{(M)}$ that maximize the value of $\Vdetdisc{\bx}{} - \Vdetdisc{\bx'}{}$, we then have,
    \[
    \sigma \leq \frac{1}{M} + (1 - \syncconst) \sigma
    \]
    Finally, putting the results together we have for any $\bx, \bx' \in \Dels^{(M)}$ with $\|\bx - \bx'\|_1 \leq 2/M$:
    \[
    |\Vdetdisc{\bx}{} - \Vdetdisc{\bx'}{}| \leq \sigma \leq \frac{1}{2 \syncconst} \|\bx - \bx'\|_1
    \]
\end{proof}

We are now ready for the final steps of the proof.

\begin{proof}[Proof of Lemma \ref{LEM::LLIP}]
    The result then follows by noting that for any $\bx, \bx' \in \Dels^{(M)}$ there exists a shortest path from $\bx$ to $\bx'$, say $\{\bx = \bx_{0}, \bx_{1}, \bx_{2} \dots ,\bx_{P} = \bx'\}$ such that between any two sequential components exactly one component is changed at a time $\|\bx_{i} - \bx_{i + 1}\| = \frac{2}{M}$. Clearly, no more than $2 M$ changes can occur along this path, it follows that $P < 2 M$. Summing along this path allows us to see that for all $\bx, \bx' \in \Dels^{(M)}$ we have,
        \[
    |\Vdetdisc{\bx}{} - \Vdetdisc{\bx'}{}| \leq \frac{1}{ \syncconst} \|\bx - \bx'\|_1
    \]
    We can now complete the proof by noting that $\cup_{M > 0}\Dels^{(M)}$ is dense in $\Dels$. Hence, the result must hold for any $\bx, \bx' \in \Dels$. This completes the proof.    
\end{proof}

\section{Bound on Lagrange multiplier $\lambda$} \label{APX:LAGBOUND}

We start by defining the fixed point problem in terms of the state action occupation measure $\by := \{y(s, a)\}_{s, a}$  similar to the choice made in \eqref{EQ::StateAction}, this problem corresponds exactly with problem \eqref{EQ::VOPTDYN-Tinf}. Recall, $y(s, a)$ is the fraction of arms in state $s$ while performing action $a$.The average reward problem is restated as follows :
\begin{align}\label{EQ:PSAC}
    \Vopt{}{\infty} := \max_{\by} \sum_{s, a} y(s, a) r(s, a) \tag{$P_{\text{sac}}$}\\
    \text{s. t.} \nonumber \\
    \sum_{a} y(s, a) = \sum_{s', a} y(s', a) P(s|s', a) \hspace{0.2 in} \text{for all $s$} \nonumber\\
    \sum_{s} y(s, 1) \leq \alpha \nonumber \\
    \sum_{s, a} y(s, a) = 1 \nonumber
\end{align}

We note here that this problem can equivalently be seen as an average reward problem for a single arm markovian bandit with a relaxed action constraint, $\alpha$ which can be treated as an \emph{average time constraint} rather than a \emph{per time} constraint problem. Concretely, we are enforcing the following constraint
\[
\lim_{T \uparrow \infty}\frac{1}{T} \sum_{t = 0}^{T- 1}\sum_{s} y(s, 1, t) \leq \alpha \hspace{0.1 in} \text{equivalently} \hspace{0.1 in} 
\lim_{T \uparrow \infty}\sum_{t = 0}^{T - 1} \|u(t)\|_1 \leq \alpha \hspace{0.2 in}  \hspace{0.2 in} 
\]
where $y(s,1,t)$ will then correspond to the probability that the arm is in state $s$ and play action $a$ at time $t$ under a fixed policy.

Let $\laghomosub \geq 0$ be a multiplier corresponding to the  $(\sum_{s} y(s, 1) -\alpha ) \leq 0$ constraint. One can then formulate a new problem with a cost for playing $1$ equal to $\laghomosub$. Hence, we have:

\begin{align}\label{EQ:RELPROB}
    \max_{\by} \sum_{s, a} y(s, a) r(s, a) + \laghomosub(\alpha - \sum_{s} y(s, 1))\tag{$P_{\text{sa}}(\laghomosub)$}\\
    \text{s. t.} \nonumber \\
    \sum_{a} y(s, a) = \sum_{s', a} y(s', a) P(s|s', a) \hspace{0.2 in} \text{for all $s$} \nonumber\\
    \sum_{s, a} y(s, a) = 1 \nonumber
\end{align}

This problem can be reformulated in terms of a new reward function $r_{\laghomosub}(s, a)$ defined by $r_{\laghomosub}(s, 0) = r(s, 0)$ and $r_{\laghomosub}(s, 1) = r(s, 1) - \laghomosub$ to yield:
\begin{align}
    \max_{\by} \sum_{s, a} y(s, a) r_{\laghomosub}(s, a) + \laghomosub\alpha \\
    \text{s. t.} \nonumber \\
    \sum_{a} y(s, a) = \sum_{s', a} y(s', a) P(s|s', a) \hspace{0.2 in} \text{for all $s$} \nonumber\\
    \sum_{s, a} y(s, a) = 1 \nonumber
\end{align}

Consider the following dual of \eqref{EQ:RELPROB}, the dual variables are given by a \emph{bias vector} $[\biasvec{s}]$ and a \emph{gain value} $\gainrel$ such that:
\begin{align}\label{EQ:DUALRELPROB}
    \min_{[\biasvec{s}], \gainrel} \gainrel& + \laghomosub \alpha \tag{$DP(\laghomosub)$}\\
    &\text{s. t.} \nonumber \\
    \gainrel + \biasvec{s}& - \sum_{s'} P(s'|s, a) \biasvec{s'} \geq r_{\laghomosub}(s, a) \hspace{0.1 in} \text{for all $s, a$}\nonumber
\end{align}

\begin{remark}
    Note, we refrain from using $h^{\star}(s)$ to denote the bias for this problem and opt instead to use a separate notation, $\biasvec{s}$ even though the two fixed point equations bear a resemblance to each other. This is because $h^{\star}(s)$ is the bias for the mean field limit problem whose domain lies in the space of distributions over arm states i.e, $h^{\star}(s)$ is a function whereas $\biasvec{s}$ is simply a vector. Indeed, if we knew the exact function, $h^{\star}(s)$ apriori then there would be no need to solve a $T$ horizon problem and we could instead solve a single step dynamic program and set the terminal cost to $h^{\star}(\cdot)$ to find the optimal policy at each step. 
\end{remark}

The following is a rather straight-forward result that equates the problem \eqref{EQ:RELPROB} to \eqref{EQ:DUALRELPROB} and \eqref{EQ:PSAC}.

\begin{lemma}\label{LEM:DUAL}
    Given a fixed multiplier $\laghomosub$, the programs \eqref{EQ:DUALRELPROB} and \eqref{EQ:RELPROB} are equivalent. Further, there exists vector $\biasvec{\cdot}$ that satisfies the following fixed point equations:
    \[
    \gainrel + \biasvec{s} = \max_{a} r_{\laghomosub}(s, a) + \sum_{s'} P(s'|s, a) \biasvec{s'}
    \]
    with
    \[
    \gainrel := \max_{\by} \sum_{s, a} y(s, a) r_{\laghomosub}(s, a) 
    \]
    . Finally, there exists $\laghomosub$ so that the programs \eqref{EQ:DUALRELPROB} and \eqref{EQ:PSAC} are equivalent.  
\end{lemma}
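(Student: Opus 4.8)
The plan is to prove the three assertions in order, treating each as a consequence of finite-dimensional linear-programming duality combined with the classical average-reward MDP theory of \citet{puterman2014markov}.

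First I would establish the equivalence of \eqref{EQ:RELPROB} and \eqref{EQ:DUALRELPROB} by writing the Lagrangian of \eqref{EQ:RELPROB} with a multiplier $\biasvec{s}$ for each balance constraint $\sum_a y(s,a) = \sum_{s',a} y(s',a)P(s|s',a)$ and a multiplier $\gainrel$ for the normalization constraint $\sum_{s,a} y(s,a)=1$. Relabeling the source and target states in the term coming from the balance constraints and collecting the coefficient of each $y(s,a)\ge 0$, the inner maximization over $\by\ge 0$ is finite precisely when $\gainrel + \biasvec{s} - \sum_{s'}P(s'|s,a)\biasvec{s'} \ge r_{\laghomosub}(s,a)$ for every $(s,a)$, and the resulting dual objective collapses to $\gainrel + \laghomosub\alpha$; this is exactly \eqref{EQ:DUALRELPROB}. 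The primal \eqref{EQ:RELPROB} is feasible (the stationary state-action distribution of any stationary policy satisfies the balance and normalization constraints) and its value is bounded because the rewards are bounded, so LP strong duality gives a zero duality gap and the two optimal values coincide.

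Next I would extract the fixed-point equation. Dual feasibility already delivers the optimality \emph{inequality} $\gainrel + \biasvec{s} \ge \max_a\{r_{\laghomosub}(s,a) + \sum_{s'}P(s'|s,a)\biasvec{s'}\}$ for all $s$. To upgrade these to equalities at every $s$ I would invoke complementary slackness at an optimal primal--dual pair $(\by^*,\gainrel,\biasvec{\cdot})$: for each recurrent state (those with $\sum_a y^*(s,a)>0$) some action is tight. Covering the remaining transient states, and guaranteeing that the gain $\gainrel$ is the same constant for all initial states, is where Assumption~\ref{AS::SA} is used: the condition $\syncconst_k>0$ forces the single-arm chain to be unichain/communicating, so the average-cost optimality \emph{equation} (not merely the inequality) admits a solution $(\gainrel,\biasvec{\cdot})$ with state-independent $\gainrel$ by the classical existence theorem for finite communicating MDPs \citep{puterman2014markov}. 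The first step then identifies this constant with the primal value, $\gainrel = \max_{\by}\sum_{s,a}y(s,a)r_{\laghomosub}(s,a)$.

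Last I would pass from the relaxed problem to the constrained problem \eqref{EQ:PSAC}. The sole difference between \eqref{EQ:PSAC} and \eqref{EQ:RELPROB} is the inequality $\sum_s y(s,1)\le\alpha$, which in \eqref{EQ:RELPROB} has been dualized with multiplier $\laghomosub\ge 0$. Since \eqref{EQ:PSAC} is again a feasible, bounded linear program, LP strong duality (equivalently the KKT conditions) furnishes an optimal multiplier $\laghomosub^\star\ge 0$ for this single constraint at which the Lagrangian relaxation is exact, so that $\max_{\by}\{\sum_{s,a}y(s,a)r(s,a)+\laghomosub^\star(\alpha-\sum_s y(s,1))\}$ equals the optimal value of \eqref{EQ:PSAC}. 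Taking $\laghomosub=\laghomosub^\star$ makes \eqref{EQ:RELPROB}, and hence \eqref{EQ:DUALRELPROB} by the first step, share the optimal value of \eqref{EQ:PSAC}, which is the claimed equivalence. The hard part will be the second step: converting the optimality inequality that dual feasibility supplies for free into the optimality equation holding at \emph{every} state with a state-independent gain, since this is exactly where Assumption~\ref{AS::SA} must rule out multiple recurrent classes and a starting-state-dependent gain, and where transient states uncharged by $\by^*$ require extra care.
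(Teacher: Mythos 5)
Your proof is correct, and its three conclusions coincide with the paper's, but you arrive at them by a more explicit and self-contained route. The paper's own proof is essentially a two-line citation: it observes that a finite MDP is ``contracting with a finite absorbing set'' in the sense of Definition~11.4 of \citet{altman1999constrained} (any constant function exceeding $1$ serving as a uniform Lyapunov function), invokes Theorem~12.4 of that reference to get both the equivalence of \eqref{EQ:RELPROB} and \eqref{EQ:DUALRELPROB} and the fixed-point (average-cost optimality) equation in one stroke, and then, exactly as you do, obtains the final claim from strong duality of the linear program \eqref{EQ:PSAC}. You instead derive the primal--dual equivalence by writing out the Lagrangian and checking feasibility and boundedness by hand, and you source the optimality \emph{equation} from the classical unichain/communicating theory of \citet{puterman2014markov} rather than from Altman's contracting-MDP framework. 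What your version buys is transparency about where the work actually lies: you correctly flag that dual feasibility only yields the optimality inequality, that complementary slackness closes it only at states charged by the optimal occupancy measure, and that Assumption~\ref{AS::SA} is what rescues the transient states and the state-independence of $\gainrel$ --- all of which the paper's citation silently absorbs. One small refinement: since $\syncconst_k$ in \eqref{eq:ergodic_coef} only compares arbitrary action sequences against the all-zero sequence (rather than all pairs of sequences), the cleanest way to justify the constant-gain optimality equation under Assumption~\ref{AS::SA} is the span-contraction argument that lower-bounds the maximum at one state by the action-$0$ value --- precisely the device the paper uses later in the proof of Lemma~\ref{LEM::BIASBOUND} --- rather than an appeal to generic communicating-MDP theory; but this is a matter of which classical theorem to cite, not a gap in your argument.
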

\begin{proof}
    By Definition 11.4,  \cite{altman1999constrained} our MDP is finite and hence, contracting with a finite absorbing set. Setting the absorbing set to the whole state space, we find that any constant function greater than $1$ is a uniform Lyapunov function for our MDP. Now by Theorem 12.4, \cite{altman1999constrained}, \eqref{EQ:DUALRELPROB} and \eqref{EQ:RELPROB} are equivalent. 
    The last result holds as a direct consequence of strong duality of the linear program \eqref{EQ:PSAC}. 
\end{proof}
     An immediate consequence of the result  above is a bound on the Lagrange multiplier $[\biasvec{s}]$. The span norm of a vector $\mathbf{v}$, denoted by $\|\mathbf{v}\|_{sp} := \max_{s_1}\mathbf{v}(s_1) - \min_{s_2}\mathbf{v}(s_2)$. The span norm is a semi norm and is often used to show convergence in value iteration, see for example \cite{puterman2014markov}. 
\begin{lemma}\label{LEM::BIASBOUND}
    The Lagrange multiplier $[\biasvec{s}]$ is bounded in the span norm as follows:
    \[
    \|[\biasvec{\cdot}]\|_{sp} \leq \frac{k \|\|r\|_{sp} + \laghomosub \alpha \|_{sp}}{\syncconst_{k}}
    \]
\end{lemma}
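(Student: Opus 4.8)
The plan is to bound the span seminorm of $\biasvec{\cdot}$ directly from the $k$-step unfolding of the average-reward optimality equation of Lemma~\ref{LEM:DUAL}, exploiting the coupling coefficient $\syncconst_k$ of \eqref{eq:ergodic_coef}. Starting from the single-arm fixed point $\gainrel + \biasvec{s} = \max_a \bigl[r_{\laghomosub}(s,a) + \sum_{s'}P(s'|s,a)\,\biasvec{s'}\bigr]$, I would iterate the associated Bellman operator $\mathcal{T}$ exactly $k$ times. Since $\gainrel\one + \biasvec{} = \mathcal{T}\biasvec{}$ and $\mathcal{T}(v+c\one)=\mathcal{T}v+c\one$, one gets $k\gainrel\one + \biasvec{} = \mathcal{T}^k\biasvec{}$, i.e. a $k$-horizon control representation $\biasvec{s}+k\gainrel = \max_{\pi}\E^{\pi}_{s}\bigl[\sum_{t=0}^{k-1} r_{\laghomosub}(S_t,A_t) + \biasvec{S_k}\bigr]$, the maximum being over $k$-step policies.

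Next I would let $s_1\in\arg\max_s\biasvec{s}$ and $s_2\in\arg\min_s\biasvec{s}$, so that $\|\biasvec{\cdot}\|_{sp}=\biasvec{s_1}-\biasvec{s_2}$. For $s_1$ I use the optimal $k$-step policy $\pi$, and for $s_2$ the all-zero action sequence, which is always feasible and hence gives the one-sided Bellman inequality $\biasvec{s_2}+k\gainrel \ge \E^{0}_{s_2}\bigl[\sum_{t}r_{\laghomosub}(S_t,0)+\biasvec{S_k}\bigr]$. Subtracting, the accumulated stage-reward difference over the $k$ steps is at most $k\,\|r_{\laghomosub}\|_{sp}$ (each one-step reward gap is bounded by the span of $r_{\laghomosub}$), while the terminal term is $\sum_{s^*}(\mu_1(s^*)-\mu_2(s^*))\biasvec{s^*}$, with $\mu_1$ the $k$-step law from $s_1$ under $\pi$ and $\mu_2=P^{0\dots 0}_{s_2,\cdot}$. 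The elementary inequality $\sum_{s^*}(\mu_1(s^*)-\mu_2(s^*))\biasvec{s^*}\le\bigl(1-\sum_{s^*}\min(\mu_1(s^*),\mu_2(s^*))\bigr)\|\biasvec{\cdot}\|_{sp}$, valid for any two distributions, together with the overlap lower bound $\syncconst_k$ from \eqref{eq:ergodic_coef}, bounds this by $(1-\syncconst_k)\|\biasvec{\cdot}\|_{sp}$. Combining yields $\|\biasvec{\cdot}\|_{sp}\le k\,\|r_{\laghomosub}\|_{sp}+(1-\syncconst_k)\|\biasvec{\cdot}\|_{sp}$, and rearranging gives $\|\biasvec{\cdot}\|_{sp}\le k\,\|r_{\laghomosub}\|_{sp}/\syncconst_k$. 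Finally, $\|r_{\laghomosub}\|_{sp}\le\|r\|_{sp}+\laghomosub$ (subtracting the budget price $\laghomosub$ on the action-$1$ rows changes the span by at most $\laghomosub$), and the additive constant $\laghomosub\alpha$ appearing in the objective of \eqref{EQ:RELPROB} does not affect the span; this matches the displayed bound.

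The main obstacle is precisely the terminal coupling step: the optimal $k$-step policy $\pi$ for $s_1$ is in general \emph{adaptive} (state-dependent), whereas $\syncconst_k$ in \eqref{eq:ergodic_coef} is defined through \emph{open-loop} sequences $\ba\in\{0,1\}^k$ compared against the all-zero sequence. Thus the clean inequality $\sum_{s^*}\min(\mu_1(s^*),\mu_2(s^*))\ge\syncconst_k$ is immediate only when $\mu_1=P^{\ba}_{s_1,\cdot}$ for a fixed $\ba$, and one must argue that the adaptive mixture law $\mu_1$ retains overlap at least $\syncconst_k$ with the fixed all-zero law $\mu_2$. I would handle this by a sample-path coupling of the two chains in which the first chain's realized control is revealed as a fixed sequence $\ba$ along each trajectory, for which $\sum_{s^*}\min(P^{\ba}_{s_1,s^*},P^{0\dots0}_{s_2,s^*})\ge\syncconst_k$ holds by definition, the bound then surviving the expectation over $\ba$; this is exactly the asymmetric structure (arbitrary sequence for one chain, all-zeros for the other) for which $\syncconst_k$ was designed, and it mirrors the single-arm coupling already carried out in Lemma~\ref{LEM::LLIP2}.

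As an alternative that bypasses any subtlety about the average-reward equation, I would, mirroring the passage from Lemma~\ref{LEM::LLIP} to Lemma~\ref{LEM::EXGH}, first establish the same span bound for the $\beta$-discounted value of $(P_{sa}(\laghomosub))$ \emph{uniformly in} $\beta$ via the identical $k$-step coupling contraction, and then pass to the limit $\beta\uparrow 1$, obtaining $\biasvec{\cdot}$ as the limiting relative value; the uniform-in-$\beta$ span estimate transfers verbatim to $\biasvec{\cdot}$ and gives the stated constant $k/\syncconst_k$ multiplying the reward span.
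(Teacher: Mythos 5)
Your argument is essentially the paper's own proof: the paper derives the same self-bounding inequality $\|\biasvec{\cdot}\|_{sp}\le\|r_{\laghomosub}\|_{sp}+(1-\syncconst_1)\|\biasvec{\cdot}\|_{sp}$ by evaluating the optimality equation at two states, keeping the maximizing action at $s_1$ while forcing action $0$ at $s_2$, and subtracting the overlap $\min\{P(s'|s_1,a_1),P(s'|s_2,0)\}$ before rearranging --- it only writes out the case $k=1$ and asserts the extension to general $k$, so the adaptive-policy-versus-open-loop subtlety you identify is real but is glossed over by the paper as well. One small mismatch: you bound $\|r_{\laghomosub}\|_{sp}\le\|r\|_{sp}+\laghomosub$, which for $\alpha<1$ is weaker than the $\|r\|_{sp}+\laghomosub\alpha$ appearing in the stated inequality (the paper asserts the latter without justification), so the constant you actually obtain is $k(\|r\|_{sp}+\laghomosub)/\syncconst_k$ rather than the displayed one.
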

\begin{proof}
For the purposes of this proof, we are going to assume that the ergodicity coefficient is non-zero at $1$ i.e,  $\syncconst{k} > 0$ occurs when $k = 1$, denoted by $\syncconst_{1}$. However, this proof can easily be extended for any fixed $k$. Given any two states $s_1$ and $s_2$ we have,
   \begin{align*}
       \biasvec{s_1} - \biasvec{s_2} =& \max_{a_1} r_{\laghomosub}(s_1, a_1) + \sum_{s'} P(s'|s_1, a_1) \biasvec{s'}\\
       &\hspace{0.2 in}- \max_{a_2} \left[r_{\laghomosub}(s_2, a_2) + \sum_{s'} P(s'|s_2, a_2) \biasvec{s'}\right]\\
       \leq& \max_{a_1} \left[r_{\laghomosub}(s_1, a_1) + \sum_{s'} P(s'|s_1, a_1) \biasvec{s'}\right]\\
       &\hspace{0.2 in}- \left[r_{\laghomosub}(s_2, 0) + \sum_{s'} P(s'|s_2, 0)\biasvec{s'}\right]\\
       \leq& \|r_{\laghomosub}\|_{sp} + \max_{a_1}\sum_{s'}\biasvec{s'}\left(P(s'|s_1, a_1) - P(s'|s_2, 0)\right)\\
       =& \|r_{\laghomosub}\|_{sp} + \max_{a_1}\sum_{s'}\biasvec{s'}\bigg[(P(s'|s_1, a_1) - \rho_{s'}) - (P(s'|s_2, 0) - \rho_{s'})\bigg]\\
       \leq& \|r_{\laghomosub}\|_{sp} + \max_{s}\biasvec{s} \sum_{s'} (P(s'|s_1, a_1) - \rho(s'))\\
       &\hspace{0.4 in}- \min_{s}\biasvec{s} \sum_{s'}(P(s'|s_2, 0) - \rho(s'))\\
       \leq& \|r_{\laghomosub}\|_{sp} + \|[\biasvec{\cdot}]\|_{sp}(1 - \syncconst_{1})
   \end{align*}
   Here, $\rho(s') := \min\{P(s'|s_1, a_1), P(s'|s_2, 0)\}$ and the last step follows from the definition of the ergodic coefficient. Note that $\|r_{\laghomosub}\|_{sp} \leq ( \|r\|_{sp} + |\laghomosub| \alpha)$. Finally, noting that the Left hand side holds for all pairs $s_1, s_2$ completes the result. 
\end{proof}

It should be noted that the vector of multipliers, $\biasvec{\cdot}$ can be shifted by an arbitrary constant and still provide identical results. Therefore, our choice of using the span norm to bound the multipliers is appropriate since the span norm is invariant when the vector is shifted by a constant value unlike other norms.
Finally, we will need a bound on $\laghomosub$ in order to complete our bound.

\begin{lemma}\label{LEM:LAGBOUND}
    The absolute value of the multiplier for the action constraint, $\laghomosub$ can be bounded by $\frac{k\|r\|_{sp}}{\syncconst_{k}}$
\end{lemma}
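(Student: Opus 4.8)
The plan is to bound $\laghomosub$ by exhibiting a single state at which pulling is optimal and reading off a one-step inequality from the Bellman equation furnished by Lemma~\ref{LEM:DUAL}. If $\laghomosub=0$ there is nothing to prove, so assume $\laghomosub>0$. By complementary slackness in \eqref{EQ:RELPROB} (equivalently, strong duality for \eqref{EQ:PSAC}), the budget constraint $\sum_s y(s,1)\le\alpha$ is then binding, so the optimal occupation measure places positive mass on some pair $(s_1,1)$; hence action $1$ attains the maximum in the Bellman equation at $s_1$, i.e. $\gainrel+\biasvec{s_1}=r(s_1,1)-\laghomosub+\sum_{s'}P(s'|s_1,1)\biasvec{s'}\ge r(s_1,0)+\sum_{s'}P(s'|s_1,0)\biasvec{s'}$. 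Rearranging yields the one-step bound $\laghomosub\le\big(r(s_1,1)-r(s_1,0)\big)+\sum_{s'}\big(P(s'|s_1,1)-P(s'|s_1,0)\big)\biasvec{s'}$.

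Next I would control the two terms. The reward difference is at most $\|r\|_{sp}$. For the transition term I would use the coupling device of Lemmas~\ref{LEM::LLIP} and \ref{LEM::BIASBOUND}: with $\rho(s')=\min\{P(s'|s_1,1),P(s'|s_1,0)\}$, the signed measure $P(\cdot|s_1,1)-P(\cdot|s_1,0)$ has positive and negative parts of common mass $1-\sum_{s'}\rho(s')$, so $\sum_{s'}\big(P(s'|s_1,1)-P(s'|s_1,0)\big)\biasvec{s'}\le\big(1-\sum_{s'}\rho(s')\big)\|\biasvec{\cdot}\|_{sp}$. Taking $k=1$ in \eqref{eq:ergodic_coef} gives $\sum_{s'}\rho(s')\ge\syncconst_{1}$, so $\laghomosub\le\|r\|_{sp}+(1-\syncconst_1)\|\biasvec{\cdot}\|_{sp}$. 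Finally I would invoke the span bound $\|\biasvec{\cdot}\|_{sp}\le k\|r\|_{sp}/\syncconst_k$ of Lemma~\ref{LEM::BIASBOUND}, noting that the penalty $-\laghomosub$ does not enlarge it: action $0$ is the baseline in the span comparison and $\laghomosub\ge0$ only lowers the action-$1$ reward, so the $\laghomosub\alpha$ term there is in fact unnecessary. For $k=1$ these combine cleanly into $\laghomosub\le\|r\|_{sp}+(1-\syncconst_1)\|r\|_{sp}/\syncconst_1=\|r\|_{sp}/\syncconst_1$.

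For general $k$ I would iterate the optimality equation a further $k-1$ steps along the all-$0$ action sequence, lower-bounding each continuation by its action-$0$ value (which only discards nonnegative penalties). The two $k$-step laws reached from $s_1$ under the action sequences $(1,0,\dots,0)$ and $(0,\dots,0)$ then overlap by at least $\syncconst_k$ via \eqref{eq:ergodic_coef}, so the transition term becomes at most $(k-1)\|r\|_{sp}+(1-\syncconst_k)\|\biasvec{\cdot}\|_{sp}$; together with the reward difference and $\|\biasvec{\cdot}\|_{sp}\le k\|r\|_{sp}/\syncconst_k$ the constants telescope to $\laghomosub\le k\|r\|_{sp}+(1-\syncconst_k)k\|r\|_{sp}/\syncconst_k=k\|r\|_{sp}/\syncconst_k$. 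I expect the $k$-step bookkeeping to be the main obstacle: the unfolding must produce the accumulated reward span exactly as $(k-1)\|r\|_{sp}$ and the residual coefficient on $\|\biasvec{\cdot}\|_{sp}$ exactly as $(1-\syncconst_k)$ rather than a one-step $(1-\syncconst_1)$ — the latter would be useless, since Assumption~\ref{AS::SA} only guarantees $\syncconst_k>0$ for \emph{some} $k$, possibly with $\syncconst_1=0$. A secondary point requiring care is justifying the existence of an action-$1$-optimal state through complementary slackness even when the optimal policy does not randomize.
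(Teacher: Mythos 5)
Your proof is correct and follows essentially the same route as the paper: isolate $\laghomosub$ from the Bellman (dual-feasibility) inequalities at a single state where action $1$ attains the maximum, bound the reward gap by $\|r\|_{sp}$ and the transition gap by $(1-\syncconst_k)\|\biasvec{\cdot}\|_{sp}$ via the ergodic-coefficient overlap, then close with the span bound on the bias. The only differences are cosmetic: you locate the pivotal state by complementary slackness at the true optimal multiplier, whereas the paper introduces the threshold $\lagmax$ above which no state is pulled and bounds that instead; and your observation that the $\laghomosub\alpha$ term in the bias span bound is unnecessary (because action $0$ is the baseline and the penalty only lowers the action-$1$ reward) is exactly what the paper exploits implicitly when it derives $\|\biasgen{\cdot}{\lagmax}\|_{sp}\le\|r\|_{sp}/\syncconst_1$ in this proof --- your general-$k$ sketch is left at the same level of detail ("easily extended") as the paper's.
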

\begin{proof}
    Once again our proof will use $k = 1$ but this proof can easily be extended for any $k$. Note that, for some $\nu > \lagmax$, we will find that the optimal action to play for all states is $0$. Hence, at $\nu = \lagmax$ we can choose one distinguished state $s_0$ where the actions $0$ and $1$ give the same value under the optimal policy and play $0$ for all other states. Since, $\laghomosub \leq \lagmax$, our bound proceeds by computing a bound on $\lagmax$. We will denote by $\biasgen{\cdot}{\lambda}$ the multipliers for the corresponding dual problem \eqref{EQ:DUALRELPROB} under reward $r_{\lambda}$. Now we note, by the statement above we have the following,
    \begin{align*}
        \gaingen{\lagmax} + \biasgen{s_0}{\lagmax} =& r(s_0, 0) + \sum_{s'}\biasgen{s'}{\lagmax}P(s'|s_0, 0)\\
        =& r(s_0, 1) - \lagmax + \sum_{s'}\biasgen{s'}{\lagmax}P(s'|s_0, 1)
    \end{align*}
    This yields the following inequality on $\lagmax$,
    \begin{align*}
        \lagmax =& r(s_0, 1) - r(s_0, 0) +  \sum_{s'}\biasgen{s'}{\lagmax}\bigg(P(s'|s_0, 1) - P(s'|s_0, 0) \bigg)\\
        \leq&  \|r\|_{sp} + (1 - \syncconst_1)\|\biasgen{\cdot}{\lagmax}\|_{sp} 
    \end{align*}  
    Finally, let us bound $\|\biasgen{\cdot}{\lagmax}\|_{sp}$. We now have,
    \[
    \gaingen{\lagmax} + \biasgen{s_1}{\lagmax} = r(s_1, 0) + \sum_{s'}\biasgen{s'}{\lagmax}P(s'|s_1, 0)
    \]
    and
    \[
    \gaingen{\lagmax} + \biasgen{s_2}{\lagmax} = r(s_2, 0) + \sum_{s'}\biasgen{s'}{\lagmax}P(s'|s_2, 0)
    \] 
    Hence,
    \begin{align*}
     \biasgen{s_1}{\lagmax} - \biasgen{s_2}{\lagmax} =& r(s_1, 0) - r(s_2, 0) + \sum_{s'}\biasgen{s'}{\lagmax}\bigg(P(s'|s_1, 0) - P(s'|s_2, 0)\bigg) \\
     \leq& \|r\|_{sp} + (1 - \syncconst_1)\|\biasgen{\cdot}{\lagmax}\|_{sp}
    \end{align*}
    In particular,
    \[
    \|\biasgen{\cdot}{\lagmax}\|_{sp} \leq \|r\|_{sp} + (1 - \syncconst_1)\|\biasgen{\cdot}{\lagmax}\|_{sp}
    \]
    Hence, $\|\biasgen{\cdot}{\lagmax}\|_{sp} \leq \frac{\|r\|_{sp}}{\syncconst_1}$. Substituting this value to our bound on $\lagmax$ yields the result of the lemma. 
\end{proof}
The same process can be repeated for $\nu_{\text{min}}$ to get similar lower bounds on $\laghomosub$.

Combining the results of lemmas \ref{LEM::BIASBOUND} and \ref{LEM:LAGBOUND}, noting that $ \|r\|_{sp} = 1$, we find the following bound on $[\biasvec{\cdot}]$.

\begin{corollary}
    The lagrange multiplier is bounded in the span norm as follows,
    \begin{equation}
        \|[\biasvec{\cdot}]\|_{sp} \leq \frac{k }{\syncconst_{k}}\left(1 + \frac{k \alpha}{\syncconst_{k}} \right)
    \end{equation}
\end{corollary}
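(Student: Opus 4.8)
The plan is to obtain the claimed span-norm bound by simply chaining the two span-norm estimates established immediately above and then invoking the reward normalization; no new idea is required beyond bookkeeping, since both ingredients are already in hand.

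First I would start from Lemma~\ref{LEM::BIASBOUND}, which controls the bias vector in terms of the action-constraint multiplier $\laghomosub$:
\[
\|[\biasvec{\cdot}]\|_{sp} \;\le\; \frac{k\bigl(\|r\|_{sp} + |\laghomosub|\,\alpha\bigr)}{\syncconst_k}.
\]
The only quantity on the right-hand side that is not yet explicit is $|\laghomosub|$, so the remaining task is to eliminate it. I would then substitute the multiplier bound furnished by Lemma~\ref{LEM:LAGBOUND}, namely $|\laghomosub| \le k\|r\|_{sp}/\syncconst_k$ (the same argument applied to $\nu_{\min}$ as to $\lagmax$ supplies the matching lower bound, so the estimate is valid for $|\laghomosub|$). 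This gives
\[
\|[\biasvec{\cdot}]\|_{sp} \;\le\; \frac{k}{\syncconst_k}\left(\|r\|_{sp} + \frac{k\|r\|_{sp}}{\syncconst_k}\,\alpha\right) \;=\; \frac{k\|r\|_{sp}}{\syncconst_k}\left(1 + \frac{k\alpha}{\syncconst_k}\right).
\]

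Finally I would use the standing normalization that all rewards lie in $[0,1]$: the span of such a vector satisfies $\|r\|_{sp}\le 1$, so replacing $\|r\|_{sp}$ by $1$ as an upper bound yields exactly $\|[\biasvec{\cdot}]\|_{sp} \le \frac{k}{\syncconst_k}\bigl(1 + \frac{k\alpha}{\syncconst_k}\bigr)$, which is the assertion of the corollary.

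I do not expect a genuine obstacle here, as this is essentially a one-line composition of results already proved. The only points deserving care are (i) that the normalization really delivers $\|r\|_{sp}\le 1$ (it is the span, not the componentwise range, that enters the estimates), and (ii) that Lemma~\ref{LEM:LAGBOUND} is invoked in absolute value, so that the substitution into the $|\laghomosub|\,\alpha$ term is legitimate regardless of the sign of $\laghomosub$. It is also worth recording, for the downstream use in Theorem~\ref{thm:asymptotic_optimal}, that since a dual/bias vector may be freely shifted by a constant, this span bound converts into an $\ell_\infty$ bound of the same order after centering.
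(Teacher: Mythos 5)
Your proposal is correct and follows exactly the paper's own argument: the paper likewise obtains the corollary by substituting the bound $|\laghomosub|\le k\|r\|_{sp}/\syncconst_k$ from Lemma~\ref{LEM:LAGBOUND} into the estimate of Lemma~\ref{LEM::BIASBOUND} and then using the reward normalization to set $\|r\|_{sp}$ to $1$. Your remarks about taking $|\laghomosub|$ in absolute value and about converting the span bound into an $\ell_\infty$ bound after centering are also consistent with the paper's subsequent discussion.
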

 Finally, since the Lagrange multiplier is invariant under constant shifts, we will specify $\lambda$ as used in Theorem \ref{thm:asymptotic_optimal} as the unique vector such that all the components are greater than or equal to $0$, with the lowest value being equal to $0$ i.e, $\lambda(s) = \biasvec{s} - \biasvec{s_{\text{min}}} \geq 0$, where $s_{\text{min}} := \arg\min \biasvec{s}$, the vector $\lambda (s)$ so constructed allows us to conclude that 
 \[
 \|\lambda\|_{\infty} = \|[\biasvec{s}]\|_{sp} \leq \frac{k }{\syncconst_{k}}\left(1 + \frac{k \alpha}{\syncconst_{k}} \right)
 \]
as desired. 

\section{Details about the experiments}
\label{apx:parameters}

\subsection{Generalities}

All our simulations are implemented in \texttt{Python}, by using \texttt{numpy} for the random generators and array manipulation and \texttt{pulp} to solve the linear programs. All simulations are run on a personal laptop (macbook pro from 2018). To ensure reproducibility, we will make the code and the Python notebook publicly available (this not done now for double-blind reasons). 

\paragraph{Value of $\tau$} -- Except specified otherwise, we use the value $\tau=10$ in all of our example except for the example \cite{chen:tel-04068056} where we use $\tau=50$.

\paragraph{Number of simulations and confidence intervals} --  To obtain a estimate of the steady-state average performance, we simulate the system up to time $T=1000$ and estimate the average performance by computing the average over values from $t=200$ to $T=1000$.  All reported confidence intervals correspond to 95\% interval on the mean computed by using $\hat{\mu}\pm 2\frac{\hat{\sigma}}{\sqrt{K-1}}$, where $\hat{\mu}$ and $\hat{\sigma}$ are the empirical mean and standard deviation and $k$ is the number of samples (the number of independent trajectories and/or number of randomly generated examples).

\paragraph{A note on choice of algorithms for comparison} As specified in the main body of the paper, we choose to restrict our comarison to two algorithms: LP-update and FTVA.  We note here that \citet{HXCW24} is theoretically an exponentially optimal solution to the RMAB problem and its absence in our comparison might seem conspicuous to the reader. \citet{HXCW24} is a \textit{two set policy} which requires the \emph{identification of a largest set} that can be aligned with the fixed point and actively steering the remaining arms towards this set. Theoretically, we know that for a suitable definition such a set always exists for any finite joint state, however, from a practical perspective the problem of choosing a ``largest'' set that aligns with the fixed point seems unclear to us. Unfortunately, the authors do not provide an implementation for their algorithm and hence, we refrain from comparing our work in order to avoid misleading our readers about the efficacy of their algorithm in comparison to our own as the performance of such an algorithm might heavily depend on the choice of hyperparameter used to implement the alignment procedure. On the contrary, the LP-update does not have any hyperparameters (except for $\tau$ that we set to $\tau=10$ in all our simulations), and so have FTVA and LP-update.

\subsection{Example \cite{HXCW23}}

This example has $8$ dimensions and its parameters are:
\begin{align*}
    P^{ 0 }=\left(
    \begin{array}{cccccccc}
       1 & & & & & & & \\
       1 & & & & & & & \\
        &0.48 &0.52 & & & & & \\
        & &0.47 &0.53 & & & & \\
        & & & &0.9 &0.1 & & \\
        & & & & &0.9 &0.1 & \\
        & & & & & &0.9 &0.1 \\
       0.1 & & & & & & &0.9 \\
    \end{array}\right)
\end{align*}

\begin{align*}
    P^{ 1 }=\left(
    \begin{array}{cccccccc}
       0.9 &0.1 & & & & & & \\
        &0.9 &0.1 & & & & & \\
        & &0.9 &0.1 & & & & \\
        & & &0.9 &0.1 & & & \\
        & & &0.46 &0.54 & & & \\
        & & & &0.45 &0.55 & & \\
        & & & & &0.44 &0.56 & \\
        & & & & & &0.43 &0.57 \\
    \end{array}\right)
\end{align*}

$R^{ 1 }_i = 0 $ for all state and $R^{ 0 } =  (0, 0,0 ,0 ,0 ,0 ,0 ,0.1)$.

For $\alpha=0.5$ (which is the parameter used in \cite{HXCW23}), the value of the relaxed LP is $0.0125$ and the LP index are $[0.025 , 0.025 , 0.025 , 0.025 , 0 , -0.113 , -0.110 , -0.108]$. Note that these numbers differ from the ``Lagrangian optimal indices'' given in Appendix~G.2 of \cite{HXCW23}. This is acknowledge in \cite{HXCW23} when the authors write ``In [their] setting, because the optimal solution $y$ remains optimal even without the budget constraint, we can simply remove the budget constraint to get the Lagrangian relaxation [...] A nuance is that the optimal Lagrange multiplier for the budget constraint is not unique in this setting, so there can be different Lagrange relaxations''. As a result, the priorities given by their indices and the true LP-index are different. In our simulation, we use the ``true'' LP-index and not their values. We also tested their values and obtained results that are qualitatively equivalent for the LP-priority (\emph{i.e.}, for their order or ours, the LP-priority essentially gives no reward).

\subsection{Example \cite{chen:tel-04068056}}

This example has $|\sspace|=3$ dimensions and its parameters are:

\begin{align*}
    P^{ 0 }=\left(
    \begin{array}{ccc}
       0.022 &0.102 &0.875 \\
       0.034 &0.172 &0.794 \\
       0.523 &0.455 &0.022 \\
    \end{array}\right)\qquad 
    P^{ 1 }=\left(
    \begin{array}{ccc}
       0.149 &0.304 &0.547 \\
       0.568 &0.411 &0.020 \\
       0.253 &0.273 &0.474 \\
    \end{array}\right)
\end{align*}

$R^{ 1 } = ( 0.374, 0.117, 0.079) $ and $R^0_i=0$ for all $i\in\sspace$. 

For $\alpha=0.4$ (which was the parameter used in \cite{chen:tel-04068056}), the value of the relaxed LP is $0.1238$ and its LP-index are $(0.199 , -0.000 , -0.133)$.

\subsection{Random example}

To generate random example, we use functions from the library \texttt{numpy} of \texttt{Python}: 
\begin{itemize}
    \item To generate the transition matrices, We generate an array of size $S\times2\times S$ by using the function \texttt{np.random.exponential(size=(S, 2, S))} from the library numpy of python and we then normalize each line so that the sum to $1$. 
    \item We generate reward by using \texttt{np.random.exponential(size=(S, 2))}
\end{itemize}
The example used in Figure~\ref{fig:perf_function_of_N}(a) corresponds to setting the seed of the random generator to $3$ by using \texttt{np.random.seed(3)} before calling the random functions. Its parameters (rounded to $3$ digits) are:

\begin{align*}
    P^{ 0 }=\left(
    \begin{array}{cccccccc}
       0.101 &0.155 &0.043 &0.090 &0.281 &0.285 &0.017 &0.029 \\
       0.006 &0.207 &0.076 &0.136 &0.085 &0.299 &0.147 &0.043 \\
       0.317 &0.254 &0.065 &0.013 &0.144 &0.111 &0.061 &0.035 \\
       0.098 &0.183 &0.069 &0.068 &0.218 &0.028 &0.200 &0.136 \\
       0.053 &0.080 &0.009 &0.038 &0.483 &0.036 &0.159 &0.143 \\
       0.018 &0.105 &0.027 &0.397 &0.150 &0.102 &0.161 &0.040 \\
       0.110 &0.050 &0.088 &0.024 &0.023 &0.142 &0.169 &0.393 \\
       0.055 &0.043 &0.017 &0.494 &0.227 &0.034 &0.119 &0.011 \\
    \end{array}\right)
\end{align*}

$R^{ 0 } = (0.073, 0.087, 0.778, 0.186, 1.178, 0.417, 1.996, 1.351)$

\begin{align*}
    P^{ 1 }=\left(
    \begin{array}{cccccccc}
       0.011 &0.124 &0.006 &0.131 &0.224 &0.070 &0.241 &0.191 \\
       0.071 &0.138 &0.033 &0.023 &0.045 &0.250 &0.339 &0.101 \\
       0.093 &0.113 &0.056 &0.061 &0.109 &0.351 &0.157 &0.059 \\
       0.158 &0.176 &0.151 &0.150 &0.060 &0.142 &0.053 &0.109 \\
       0.370 &0.185 &0.261 &0.020 &0.022 &0.064 &0.047 &0.030 \\
       0.199 &0.139 &0.099 &0.050 &0.141 &0.104 &0.082 &0.187 \\
       0.214 &0.088 &0.011 &0.075 &0.295 &0.174 &0.075 &0.068 \\
       0.028 &0.157 &0.126 &0.078 &0.039 &0.127 &0.376 &0.069 \\
    \end{array}\right)
\end{align*}

$R^{ 1 } = (0.059, 3.212, 1.817, 0.302, 2.259, 0.067, 0.344, 0.172)$.

For $\alpha=0.5$, the value of the relaxed problem is $1.3885$ and the LP-index are $0.377 , 3.273 , 0.846 , -0.116 , 0.802 ,  , -1.230 , -0.562$.

For the other randomly generated examples, we compute the average performance over 50 examples by varying the seed between $0$ and $49$ for most of the figures except for Figures~\ref{fig:gain_function_parameters}(b) where we only use $20$ examples (and vary the seed between $0$ and $19$) to improve computation time.

\end{document}